\providecommand{\tabularnewline}{\\}
\theoremstyle{plain}
\newtheorem{thm}{\protect\theoremname}[section]
  \theoremstyle{plain}
  \newtheorem{prop}[thm]{\protect\propositionname}
  \theoremstyle{definition}
  \newtheorem{defn}[thm]{\protect\definitionname}
  \theoremstyle{remark}
  \newtheorem{rem}[thm]{\protect\remarkname}
  \theoremstyle{definition}
  \newtheorem{example}[thm]{\protect\examplename}
  \theoremstyle{plain}
  \newtheorem{lem}[thm]{\protect\lemmaname}
  \theoremstyle{plain}
  \newtheorem{cor}[thm]{\protect\corollaryname}
  \theoremstyle{plain}
  \newtheorem{conjecture}[thm]{\protect\conjecturename}
\def\quot{/\!\!/}
\def\sym{\mathsf{Sym}}
\def\odd{\scriptsize \mathrm{odd}}
\def\even{\scriptsize \mathrm{even}}
\def\hom{\mathsf{Hom}}
\title[Hodge-Deligne polynomials of character varieties]
{Hodge-Deligne polynomials of \\ character varieties of free abelian groups}
\author[C. Florentino]{Carlos Florentino}
\address{Departamento de Matem\'{a}tica, Faculdade de Ci\^{e}ncias, Univ. de Lisboa,  Edf. C6, Campo Grande 1749-016 Lisboa, Portugal}
\email{caflorentino@fc.ul.pt}
\author[J. Silva]{Jaime Silva}
\address{Departamento de Matem\'{a}tica, ISEL - Instituto Superior de Engenharia de Lisboa, Rua Conselheiro Emídio Navarro, 1, 1959-007 Lisboa, Portugal}
\email{jaime.a.m.silva@gmail.com}
\thanks{The authors acknowledge support from the projects PTDC/MAT-PUR/30234/2017 and EXCL/MAT-GEO/0222/2012, FCT, Portugal, and ``RNMS: GEometric structures And Representation varieties'' (the GEAR Network), U. S. National Science Foundation. J. Silva was supported by the FCT grant SFRH/BD/84967/2012.}
\subjclass[2020]{32S35, 20C30; 14L30}
\keywords{Free abelian group, character variety, mixed Hodge structures, 
Hodge-Deligne polynomials, equivariant E-polynomials, finite quotients}
  \providecommand{\conjecturename}{Conjecture}
  \providecommand{\corollaryname}{Corollary}
  \providecommand{\definitionname}{Definition}
  \providecommand{\examplename}{Example}
  \providecommand{\lemmaname}{Lemma}
  \providecommand{\propositionname}{Proposition}
  \providecommand{\remarkname}{Remark}
\providecommand{\theoremname}{Theorem}
\begin{document}
\begin{abstract}
Let $F$ be a finite group and $X$ be a complex quasi-projective
$F$-variety. For $r\in\mathbb{N}$, we consider the mixed Hodge-Deligne
polynomials of quotients $X^{r}/F$ where $F$ acts diagonally, and
compute them for certain classes of varieties $X$ with simple mixed
Hodge structures. A particularly interesting case is when $X$ is
the maximal torus of an affine reductive group $G$, and $F$ is its
Weyl group. As an application, we obtain explicit formulae for the
Hodge-Deligne and $E$-polynomials of (the distinguished component
of) $G$-character varieties of free abelian groups. In the cases
$G=GL(n,\mathbb{C})$ and $SL(n,\mathbb{C})$ we get even more concrete
expressions for these polynomials, using the combinatorics of partitions.
\end{abstract}

\maketitle

\section{Introduction}

\thispagestyle{empty}

The study of the geometry, topology and arithmetic of character varieties
is an important topic of contemporary research. Given a reductive
complex algebraic group $G$, and a finitely presented group $\Gamma$,
the $G$-character variety of $\Gamma$ is the (affine) geometric
invariant theory (GIT) quotient 
\[
\mathcal{M}_{\Gamma}G:=\hom(\Gamma,G)\quot G.
\]

When the group $\Gamma$ is the fundamental group of a Riemann surface
(or more generally, a Kähler group) these spaces are homeomorphic
to moduli spaces of $G$-Higgs bundles via the non-abelian Hodge correspondence
(see, for example, \cite{Hi,Si}), and have found interesting connections
to important problems in Mathematical Physics in the context of mirror
symmetry and the geometric Langlands correspondence.

Recently, some interesting formulas were obtained by Hausel, Letellier
and Rodriguez-Villegas for the so-called $E$-polynomial of smooth
$GL(n,\mathbb{C})$-character varieties of surface groups, by applying
arithmetic harmonic analysis to their $\mathbb{Z}$-models, and proving
these are polynomial count \cite{HRV,HLRV}. By computing indecomposable
bundles on algebraic curves over finite fields, Schiffmann determined
the Poincaré polynomial of the moduli spaces of stable Higgs bundles,
hence of the corresponding $GL(n,\mathbb{C})$-character varieties
of surface groups \cite{Sc}. Other methods based on point counting were employed by Mereb \cite{Me}
(the $SL(n,\mathbb{C})$ case) and Baraglia-Hekmati \cite{BH} (the
singular, small $n$ case). 

Moreover, geometric tools were developed by Lawton, Logares, Muñoz
and Newstead to calculate the $E$-polynomials using stratifications
of character varieties (over $\mathbb{C}$) of surface groups, exploring
directly the additivity of these polynomials \cite{LMN,LM}. This
lead to the development of a Topological Quantum Field Theory for
character varieties by González-Prieto et al \cite{GPLM,GP}.

In the present article, we deal instead with $G$-character varieties
of \emph{free abelian groups}, and with the determination of their
\emph{mixed Hodge structures} (MHS)  for a general complex reductive
$G$. In particular, we explicitly compute the mixed Hodge polynomials
of these varieties. The mixed Hodge polynomial $\mu_{X}$ is a 3 variable
polynomial $\mu_{X}=\mu_{X}(t,u,v)$ defined for any (complex) quasi-projective
variety $X$ and encodes all numerical information about the MHS on
the cohomology of $X$, generalizing both the Poincaré and the $E$-polynomials.

To present our main results, denote the $G$-character variety of
the free abelian group $\Gamma\cong\mathbb{Z}^{r}$, $r\in\mathbb{N}$,
by: 
\[
\mathcal{M}_{r}G:=\mathcal{M}_{\mathbb{Z}^{r}}G=\hom(\mathbb{Z}^{r},G)\quot G,
\]
where $\quot$ stands for the (affine) \emph{geometric invariant theory}
(GIT) quotient (see e.g. \cite{MFK,Mu}) for the natural $G$-action,
by conjugation, on the space of representations $\hom(\mathbb{Z}^{r},G)$.
This later space consists of pairwise commuting $r$-tuples of elements
of $G$, and is of relevance in Mathematical Physics, namely in the
context of supersymmetric Yang-Mills theory \cite{KS}. When $r$
is even, $\mathbb{Z}^{r}$ is also a Kähler group (the fundamental
group of a Kähler manifold) and the smooth locus of $\mathcal{M}_{\mathbb{Z}^{2m}}(G)$
is diffeomorphic to a certain moduli space of $G$-Higgs bundles over
a $m$-dimensional abelian variety (see, for instance \cite{BF}).

The topology and geometry of character varieties of free abelian groups
has been studied by Florentino-Lawton, Sikora, and Ramras-Stafa, among
others (see, eg, \cite{FL,Si2,RS,St}). It is known that the affine
algebraic variety $\mathcal{M}_{r}G$ is not in general irreducible,
but the irreducible component of the trivial $\mathbb{Z}^{r}$-representation,
denoted $\mathcal{M}_{r}^{0}G$, has a normalization $\mathcal{M}_{r}^{\star}G$
isomorphic to $T^{r}/W$, (\cite[Thm 2.1]{Si2}) where $T\subset G$
is a maximal torus and $W$ is the Weyl group, acting diagonally on
$T^{r}$ (hence also on its cohomology). Thus, the varieties $\mathcal{M}_{r}^{\star}G$
are singular orbifolds of dimension $r\dim T$ with a special kind
of mixed Hodge structures, called \emph{balanced }or of Hodge-Tate
type and they satisfy the analogue of Poincaré duality for MHS. When
$r=2$, Thaddeus proved that $\mathcal{M}_{2}^{\star}G$ are of crucial
importance in mirror symmetry and Langlands duality, and computed
their \emph{orbifold} $E$-polynomials \cite{Th}. Here, we obtain
the following explicit formula for \emph{mixed Hodge polynomials}
of $\mathcal{M}_{r}^{\star}G$.
\begin{thm}
\label{thm:mHDP}Let $r\geq1$, $G$ be a complex reductive group
with maximal torus $T$ and Weyl group $W$. Then, 
\begin{equation}
\mu_{\mathcal{M}_{r}^{\star}G}(t,u,v)=\frac{1}{|W|}\sum_{g\in W}\det\left(I+tuv\,A_{g}\right)^{r},\label{eq:main}
\end{equation}
where $A_{g}$ is the automorphism induced on $H^{1}(T,\mathbb{C})$
by $g\in W$, and $I$ is the identity automorphism. 
\end{thm}

One consequence of this result is a formula for the (compactly supported)
$E$-polynomial of the irreducible component $\mathcal{M}_{r}^{0}G\subset\mathcal{M}_{r}G$,
for every such $G$ (Theorem \ref{thm:E-poly}).

Our approach to Theorem \ref{thm:mHDP} is based on working with \emph{equivariant}
mixed Hodge structures and their corresponding \emph{equivariant polynomials},
defined for varieties with an action of a finite group, and focusing
on certain classes of balanced varieties. In particular, we generalize
to the context of the equivariant $E$-polynomial, some of the techniques
introduced in \cite{DL} for dealing with equivariant weight polynomials.

For the groups $G=GL(n,\mathbb{C})$ and $SL(n,\mathbb{C})$, we have
that $\mathcal{M}_{r}G$ is an irreducible normal variety, and the
formula in Theorem \ref{thm:mHDP} can be made even more concrete,
in terms of partitions of $n$, and allows explicit computations of
the Hodge-Deligne, $E$- and Poincaré polynomials of the corresponding
character varieties $\mathcal{M}_{r}G=\mathcal{M}_{r}^{\star}G$.
We state the main results below in the compactly supported version,
the one which is relevant in arithmetic geometry (see \cite{HRV},
Appendix). 

Let $\mathcal{P}_{n}$ denote the set of partitions of $n\in\mathbb{N}$.
By $\underline{n}=[1^{a_{1}}2^{a_{2}}\cdots n^{a_{n}}]\in\mathcal{P}_{n}$
we denote the partition of $n$ with $a_{j}\geq0$ parts of size $j=1,\cdots,n$,
so that $n=\sum_{j}j\,a_{j}$. 
\begin{thm}
\label{thm:E-SLn} Let $G=SL(n,\mathbb{C})$, and $r\geq1$. The compactly
supported $E$-polynomial of $\mathcal{M}_{r}G$ is
\[
E_{\mathcal{M}_{r}G}^{c}(x)=\frac{1}{(x-1)^{r}}\sum_{\underline{n}\in\mathcal{P}_{n}}\prod_{j=1}^{n}\frac{(x^{j}-1)^{a_{j}r}}{a_{j}!\,j^{a_{j}}},
\]
where $\underline{n}=[1^{a_{1}}2^{a_{2}}\cdots n^{a_{n}}]\in\mathcal{P}_{n}$. 
\end{thm}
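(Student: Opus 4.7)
The plan is to apply Theorem \ref{thm:mHDP} in the special case $G=SL(n,\mathbb{C})$ and then pass from the resulting ordinary $E$-polynomial to its compactly supported version via Poincar\'e duality for orbifolds.

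First, I would use that for $G=SL(n,\mathbb{C})$ the commuting variety $\hom(\mathbb{Z}^{r},G)$ is irreducible, so $\mathcal{M}_{r}G=\mathcal{M}_{r}^{0}G$; combining this with Sikora's isomorphism (\cite{Sik}) gives $\mathcal{M}_{r}G\cong T^{r}/W$, where $T\cong(\mathbb{C}^{*})^{n-1}$ is the maximal torus of diagonal traceless matrices and $W=S_{n}$ is the Weyl group, acting on $T$ via the restriction of the coordinate permutation representation of $\mathbb{C}^{n}$. This reduces the theorem to computing $E^{c}_{T^{r}/W}(x)$.

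Next, setting $t=-1$ and $x=uv$ in Theorem \ref{thm:mHDP} yields the ordinary $E$-polynomial
\[
E_{T^{r}/W}(x)=\frac{1}{n!}\sum_{g\in S_{n}}\det(I-x\,A_{g})^{r}.
\]
Under $H^{1}(T,\mathbb{C})\cong\mathfrak{t}^{*}$, the operator $A_{g}$ is the reflection representation of $g$, obtained from the permutation action on $\mathbb{C}^{n}$ by removing the trivial summand. Since a $j$-cycle has characteristic polynomial $\lambda^{j}-1$ on $\mathbb{C}^{j}$, a permutation of cycle type $\underline{n}=[1^{a_{1}}\cdots n^{a_{n}}]$ satisfies
\[
\det(I-x\,A_{g})=\frac{\prod_{j=1}^{n}(1-x^{j})^{a_{j}}}{1-x},
\]
the denominator accounting for the removed trivial summand. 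Grouping the sum by conjugacy classes, with each class of type $\underline{n}$ having cardinality $n!/\prod_{j}a_{j}!\,j^{a_{j}}$, produces the ordinary $E$-polynomial as an explicit sum over $\mathcal{P}_{n}$.

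Finally, to convert from $E$ to $E^{c}$ I would use that $T^{r}$ is smooth and $W$ is finite, so $T^{r}/W$ is a rational cohomology manifold of complex dimension $r(n-1)$, on which orbifold Poincar\'e duality applies and gives $E^{c}_{X}(x)=x^{r(n-1)}E_{X}(1/x)$. Substituting this, and using $\sum_{j}j\,a_{j}=n$ to track the powers of $x$ cleanly, the factors $(1-x^{j})^{a_{j}r}$ become $(x^{j}-1)^{a_{j}r}$ while the overall $x$-power cancels exactly, producing the stated formula. I expect the main obstacle to be this Poincar\'e-duality step: one must ensure the substitution is valid at the refined level of the mixed Hodge data (not merely on the Euler characteristic), which requires the ``mixed Poincar\'e duality'' property available for balanced varieties such as $T^{r}/W$, as alluded to in the introduction.
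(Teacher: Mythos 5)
Your proof is correct, but it takes a genuinely different route from the paper's. The paper never applies Theorem \ref{thm:mHDP} directly to the $SL(n,\mathbb{C})$ torus; instead it first derives the $GL(n,\mathbb{C})$ formula (Proposition \ref{prop:mHs-GLn-det}, Theorem \ref{thm:mu-GLn}, Corollary \ref{cor:E-poly-GLn}) and then proves the relation $\mu_{\mathcal{M}_{r}GL(n,\mathbb{C})}=(1+tx)^{r}\mu_{\mathcal{M}_{r}SL(n,\mathbb{C})}$ (Theorem \ref{thm:sl_n.gl_n}) by viewing $T_{GL}\to T_{SL}$ as a $\mathbb{C}^{*}$-principal bundle of $S_{n}$-varieties and invoking the equivariant multiplicativity of the $E$-polynomial under fibrations (Theorem \ref{thm:multpl} in the Appendix); Theorem \ref{thm:E-SLn} then follows by dividing by $(x-1)^{r}$. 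You instead work intrinsically with the maximal torus $\widetilde{T}\cong(\mathbb{C}^{*})^{n-1}$ of $SL(n,\mathbb{C})$, identify the $W=S_{n}$-action on $H^{1}(\widetilde{T},\mathbb{C})$ as the reflection representation, and compute $\det(I-xA_{g})=\prod_{j}(1-x^{j})^{a_{j}}/(1-x)$ by splitting off the trivial summand of the permutation representation; this is where the prefactor $1/(x-1)^{r}$ arises in your version. Your route is more elementary in that it bypasses the Leray--Serre/fibration argument entirely, at the cost of the (easy) explicit identification of the reflection representation; the paper's route buys the stronger statement relating the full equivariant mixed Hodge polynomials of the two tori, which is reused elsewhere. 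Your final Poincar\'e-duality step is legitimate and is exactly what the paper itself does for $GL(n,\mathbb{C})$ in Remark \ref{rem:mu-PD}, justified by Corollary \ref{cor:quofrmeq}(1); the power count using $\sum_{j}ja_{j}=n$ checks out. One terminological slip: the maximal torus of $SL(n,\mathbb{C})$ consists of diagonal matrices of determinant one, not trace zero.
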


Theorem \ref{thm:E-SLn} generalizes, to every $r,n\geq1$, some formulas
recently obtained in \cite{BH,LM} (the cases $n=2$ and $n=3$) by
different methods, which are only tractable for low values of $n$:
the approach in \cite{LMN,LM} uses stratifications and fibrations
to compute $E$-polynomials of character varieties of free groups,
respectively, surface groups; the computations in \cite{BH} apply
representation theory of finite groups, and point counting of varieties
over finite fields.

By substituting $x=1$ in $E_{\mathcal{M}_{r}G}^{c}$, we obtain the
Euler characteristics of these moduli spaces. Moreover, by showing
that $\mathcal{M}_{r}G$ have very special mixed Hodge structures
(MHS) (that we call \emph{round}, see Definition \ref{def:Round})
Theorems \ref{thm:E-SLn} and immediately provide explicit formulas
for their mixed and Poincaré polynomials (Theorem \ref{thm:mu-GLn}).

The $GL(n,\mathbb{C})$ case is particularly symmetric, as the generating
function of mixed Hodge polynomials gives precisely the formula of
J. Cheah \cite{Ch} for the mixed Hodge numbers of symmetric products.
On the other hand, by examining the action of $W$ on the cohomology
of a maximal torus, our methods allow for the computation of $\mu_{\mathcal{M}_{r}G}$
for all the classical complex semisimple groups $G$. These will be
addressed in upcoming work.

We now outline the contents of the article. In section 2, we review
necessary background on MHS, quasi-projective varieties, etc, and
define the relevant polynomials, providing examples, and focusing
on balanced varieties. In section 3, we study properties of special
MHS, related with notions defined in \cite{DL}, and paying special
attention to round varieties, for which the knowledge of either the
Poincaré polynomial or the $E$-polynomial allows the determination
of $\mu$. Section 4 is devoted to equivariant MHS, character formulae
and the cohomology of finite quotients. Finally, in Section 5 we prove
our main Theorem and provide explicit calculations of Hodge-Deligne
and $E$-polynomials (and Euler characteristics) of character varieties
of $\mathbb{Z}^{r}$, in particular for $GL(n,\mathbb{C})$ and $SL(n,\mathbb{C})$;
in the $GL(n,\mathbb{C})$ case the computations are related to MHS
on symmetric products, thereby obtaining a curious combinatorial identity.
In the Appendix, we present a proof, based on \cite{DL}, of the equivariant
version of a theorem in \cite{LMN,LM} on the multiplicative property
of the $E$-polynomial for fibrations.

A preliminary version of the main results has been announced in \cite{FNSZ}.

\textbf{Acknowledgements.} We would like to thank many interesting
and useful conversations with colleagues on topics around mixed Hodge
structures, especially T. Baier, I. Biswas, P. Boavida, G. Granja,
S. Lawton, M. Logares, V. Muñoz and A. Oliveira; and the anonymous
referee for the careful and thorough reading, and suggestions leading
to several improvements. Thanks are also due to the Simons Center
for Geometry and Physics, and the hosts of a 2016 workshop on Higgs
bundles, where some of the ideas herein started to take shape. The
first author dedicates this article to E. Bifet, for his mathematical
enthusiasm on themes close to this one, which had a long lasting influence.

\section{Preliminaries on Character Varieties and on Mixed Hodge Structures}

We start by recalling the relevant definitions and properties of character
varieties and of mixed Hodge structures (MHS) on quasi-projective
varieties, which serves to fix terminology and notation.

\subsection{Character varieties}

Given a finitely generated group $\Gamma$ and a complex affine reductive
group $G$, the $G$-character variety of $\Gamma$ is defined to
be the (affine) Geometric Invariant Theory (GIT) quotient (see \cite{MFK,Mu};
\cite{Sw} for topological aspects): 
\[
\mathcal{M}_{\Gamma}(G)=\hom(\Gamma,G)\quot G.
\]
Note that $\hom(\Gamma,G)$, the space of homomorphisms $\rho:\Gamma\to G$,
is an affine variety, as $\Gamma$ is defined by algebraic relations,
and it is also a $G$-variety when considering the action of $G$
by conjugation on $\hom(\Gamma,G)$.

The GIT quotient above is the the maximal spectrum of the ring $\mathbb{C}[\hom(\Gamma,G)]^{G}$
of $G$-invariant regular functions on $\hom(\Gamma,G)$: 
\[
\hom(\Gamma,G)\quot G\,:=Specmax\left(\mathbb{C}[\hom(\Gamma,G)]^{G}\right).
\]
The GIT quotient does not parametrize all orbits, since some of them
may not be distinguishable by invariant functions. In fact, it can
be shown (see, for example \cite{Mu}) that the conjugation orbits
of two representations $\rho,\rho':\Gamma\to G$ define the same point
in $\hom(\Gamma,G)\quot G$ if and only if their closures intersect:
$\overline{G\cdot\rho}\cap\overline{G\cdot\rho'}\neq\emptyset$ (in
either the Zariski or the complex topology coming from an embedding
$\hom(\Gamma,G)\hookrightarrow\mathbb{C}^{N}$). For detailed definitions
and properties of general character varieties, we refer to \cite{FL,Si1}.

In this article, we will be mostly concerned with the case when $\Gamma$
is a finitely generated free abelian group, $\Gamma=\mathbb{Z}^{r}$
for some natural number $r$, the rank of $\Gamma$. The corresponding
$G$-character varieties:
\[
\mathcal{M}_{\mathbb{Z}^{r}}G=\hom(\mathbb{Z}^{r},G)\quot G,
\]
have many interesting properties, as representations in $\hom(\mathbb{Z}^{r},G)$
can be naturally identified with $r$-tuples of group elements $(A_{1},\cdots,A_{r})\in G^{r}$
that pairwise commute: $A_{i}A_{j}=A_{j}A_{i}$, for all $i,j=1,\cdots,n$. 

When $K$ is a compact Lie group, the analogous space of representations
$\hom(\mathbb{Z}^{r},K)$ is of central importance in determining
the so-called moduli space of vacua of supersymmetric gauge theories
on a $r$-dimensional torus, as studied in \cite{KS,BFM} and others.

\subsection{Mixed Hodge structures}

On a compact Kähler manifold $X$ the complex cohomology satisfies
the Hodge decomposition $H^{k}(X,\mathbb{C})\cong\bigoplus_{p+q=k}H^{p,q}(X)$
which verifies $H^{q,p}(X)\cong\overline{H^{p,q}(X)}$. This decomposition
of $H^{k}(X,\mathbb{C})$, a \textit{pure Hodge structure }\emph{of
weight $k$,} can be described, equivalently, by a decreasing filtration:
\[
H^{k}(X,\mathbb{C})=F_{0}\supseteq F_{1}\supseteq\ldots\supseteq F_{k+1}=0
\]
satisfying $F_{p}\cap\overline{F_{q}}=0$ and $F_{p}\varoplus\overline{F_{q}}=H^{k}(X,\mathbb{C})$
for all $p+q=k+1$.

This notion can be generalized to quasi-projective algebraic varieties
$X$ over $\mathbb{C}$, possibly non-smooth and/or non-compact. Namely,
the complex cohomology of any such variety is also endowed with a
natural filtration, the \emph{Hodge filtration} $F$, and moreover,
there is a special second increasing filtration on the \emph{rational}
cohomology: 
\[
0=W^{-1}\subseteq\ldots\subseteq W^{2k}=H^{k}\left(X,\mathbb{Q}\right),
\]
the \emph{weight filtration} $W$, satisfying a compatibility condition
with respect to the Hodge filtration: the latter induces a filtration
on the weighted graded pieces of the former, that needs to be a pure
Hodge structure. The vector space $H^{k}(X,\mathbb{Q})$, together
with the filtrations $F$ and $W$, is the prototype of a \emph{mixed
Hodge structure} (MHS). We denote the \emph{graded pieces} of the
associated decomposition by 
\[
H^{k,p,q}(X):=Gr_{F}^{p}Gr_{p+q}^{W_{\mathbb{C}}}H^{k}(X,\mathbb{C}),
\]
where $W_{\mathbb{C}}$ stands for the complexified weight filtration.
Notice that, even though different filtrations may lead to isomorphic
graded pieces, for convenience, we sometimes refer to the collection
of these $H^{k,p,q}(X)$ as the MHS of $X$. For background and proofs
we refer to \cite{PS} and the original articles by Deligne \cite{De1,De2}. 

The above constructions can be reproduced for the \emph{compactly
supported }cohomology groups $H_{c}^{k}(X,\mathbb{C})$, yielding
an analogous decomposition: 
\[
H_{c}^{k,p,q}(X):=Gr_{F}^{p}Gr_{p+q}^{W_{\mathbb{C}}}H_{c}^{k}(X,\mathbb{C}).
\]
MHS in the compactly supported context have interesting connections
to number theory as illustrated, for example, in the Appendix of \cite{HRV}
by N. Katz. 

Mixed Hodge structures satisfy some nice properties, as follows. 
\begin{prop}
\label{prop:Mhstructures}Let $X$ and $Y$ be complex quasi-projective
varieties. Then, 

\begin{enumerate}
\item For all $k,p,q$, we have $H^{k,q,p}(X)\cong\overline{H^{k,p,q}(X)}$; 
\item The weight and Hodge filtrations are preserved by algebraic maps.
Therefore, so are mixed Hodge structures; 
\item The Hodge and weight filtrations are preserved by the Künneth isomorphism.
Therefore, so are mixed Hodge structures; 
\item The mixed Hodge structures are compatible with the cup product: 
\begin{eqnarray*}
H^{k,p,q}\left(X\right)\smile H^{k',p',q'}\left(X\right) & \hookrightarrow & H^{k+k',p+p',q+q'}\left(X\right);
\end{eqnarray*}
\item If $X$ is smooth of complex dimension $n$, mixed Hodge structures
are compatible with Poincaré duality: 
\begin{eqnarray*}
H^{k,p,q}\left(X\right) & \cong & \left(H_{c}^{2n-k,\,n-p,\,n-q}\left(X\right)\right)^{*}.
\end{eqnarray*}
\end{enumerate}
\end{prop}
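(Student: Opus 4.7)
My plan is to assemble each of the five items from the standard machinery of Deligne's theory of mixed Hodge structures, following \cite{De1,De2} with the exposition in \cite{PS}. Since the proposition collects foundational properties, I would treat each in turn rather than aim for a single unified argument.

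For property (1), I would argue on the weight-graded pieces. Each $Gr^{W_{\mathbb{C}}}_{p+q} H^{k}(X,\mathbb{C})$ carries a pure Hodge structure of weight $p+q$, to which the classical Hodge conjugation symmetry $H^{p,q}\cong\overline{H^{q,p}}$ applies; passing to $Gr_{F}^{p}$ and its complex conjugate then yields the stated isomorphism. For property (2), the central input is Deligne's strictness theorem: every morphism of MHS is strict with respect to both the Hodge and weight filtrations, so algebraic maps, which induce morphisms of MHS on cohomology, automatically preserve the tri-graded decomposition. Property (3) follows from combining the classical K\"unneth isomorphism $H^{*}(X\times Y,\mathbb{Q})\cong H^{*}(X,\mathbb{Q})\otimes H^{*}(Y,\mathbb{Q})$ with Deligne's construction of tensor products of MHS, verifying that the induced Hodge and weight filtrations coincide with the tensor-product filtrations on each side.

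For property (4), the cup product is obtained as the composition of the K\"unneth isomorphism with the pullback by the diagonal $\Delta\colon X\to X\times X$; both being morphisms of MHS by (2) and (3), the stated shifts in Hodge and weight degrees follow immediately. Property (5) is Poincar\'e duality viewed as an isomorphism of MHS. The key input is that for smooth $X$ of dimension $n$, the fundamental class lies in $H_{c}^{2n,n,n}(X)$ (equivalently, $H_{c}^{2n}(X,\mathbb{Q})$ is a pure Hodge structure of type $(n,n)$), so the duality pairing
\[
H^{k,p,q}(X)\otimes H_{c}^{2n-k,n-p,n-q}(X)\longrightarrow H_{c}^{2n,n,n}(X)\cong\mathbb{C}
\]
is a perfect pairing of MHS exhibiting the stated tri-degree shift.

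The main obstacle in each case is not computational but foundational: verifying that the natural geometric operations (pullback, K\"unneth map, cup product, Poincar\'e pairing) are genuine morphisms of mixed Hodge structures with the right degree shifts. This is precisely the content of Deligne's construction via the logarithmic de Rham complex on a smooth compactification with simple normal crossings boundary, so within the scope of this paper it suffices to invoke \cite{De1,De2,PS} rather than reconstruct these arguments.
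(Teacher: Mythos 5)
Your proposal is correct and follows essentially the same route as the paper: item (1) is proved by observing that each weight-graded piece $Gr_{l}^{W_{\mathbb{C}}}H^{k}(X,\mathbb{C})$ carries a pure Hodge structure, to which the classical conjugation symmetry applies, while items (2)--(5) are deferred to the foundational results of Deligne as presented in \cite{PS}. The only difference is cosmetic: you sketch how (4) and (5) reduce to functoriality, K\"unneth, and the purity of $H_{c}^{2n,n,n}(X)$, whereas the paper simply cites the precise statements in \cite{PS} (Proposition 4.18, Theorem 5.44, Corollary 5.45, Proposition 6.19).
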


\begin{proof}
All of these statements are standard. For convenience, we point to
appropriate references. The proof of (1) follows from the purity of
the Hodge structure on the graded pieces of the weight filtration.

The other proofs are found in chapters 4 to 6 of the book \cite{PS}.
Specifically, for (2) see \cite[Proposition 4.18]{PS}; (3) and (4)
appear in \cite[Theorem 5.44, Corollary 5.45]{PS} and (5) in \cite[Proposition 6.19]{PS}. 
\end{proof}

\subsection{Hodge polynomials and balanced varieties}

The spaces $H^{k,p,q}(X)$ are holomorphic invariants and encode important
geometric information (diffeomorphic complex manifolds may have non-isomorphic
mixed Hodge structures, as in Example \ref{exa:HT}).

The \emph{mixed Hodge numbers of $X$} are the complex dimensions
of the MHS pieces 
\[
h^{k,p,q}(X):=\dim_{\mathbb{C}}H^{k,p,q}(X),
\]
and are typically assembled in a polynomial. 
By definition, for a pure Hodge structure, $h^{k,p,q}(X)\neq0$ unless
$k=p+q$. 
\begin{defn}
Let $X$ be a complex quasi-projective variety of complex dimension
$d$. The \emph{mixed Hodge polynomial} of $X$ (also called \emph{Hodge-Deligne polynomial}) is the three variable polynomial of degree $\leq2d$ 
\begin{eqnarray*}
\mu_{X}\left(t,u,v\right) & := & \sum_{k.p,q\geq0}h^{k,p,q}(X)\,t^{k}u^{p}v^{q}.
\end{eqnarray*}
Its specialization for $t=-1$ 
\begin{eqnarray*}
E_{X}\left(u,v\right) & := & \sum_{k.p,q\geq0}h^{k,p,q}(X)\left(-1\right)^{k}u^{p}v^{q}
\end{eqnarray*}
is called the $E$-\emph{polynomial} of $X$. 
\end{defn}

\begin{rem}
(1) The specialization of $\mu_{X}$ for $u=v=1$ gives the \emph{Poincaré
polynomial} of $X$: 
\[
P_{X}(t):=\sum_{k\geq0}b_{k}(X)\,t^{k},
\]
with $b_{k}(X):=\dim_{\mathbb{C}}H^{k}(X,\mathbb{C})$ being the Betti
numbers of $X$. Note that the coefficients of $\mu_{X}$ and of $P_{X}$
are \emph{non-negative} integers, whereas $E_{X}$ lives in the ring
$\mathbb{Z}[u,v]$.\\
(2) As mentioned before, there is an entirely parallel theory for
the compactly supported cohomology. Here, the associated Hodge numbers
are denoted by $h_{c}^{k,p,q}:=\dim_{\mathbb{C}}H_{c}^{k,p,q}\left(X\right)$.
If $\mathcal{P}_{X}$ stands for one of the polynomials in the above
definition, we will distinguish its compactly supported version by
writing $\mathcal{P}_{X}^{c}$.\\
(3) \emph{Comment on terminology:} There are inconsistencies in the
literature on the terminology used for these polynomials. Since $h^{k,p,q}(X)$
are generally called Hodge-Deligne (or mixed Hodge) numbers, we refer
to $\mu_{X}$ as \emph{Hodge-Deligne} or \emph{mixed Hodge} \emph{polynomial}.
To emphasize the distinction, the \emph{compactly supported $E$-polynomial
$E_{X}^{c}$ }will also be called the \emph{Serre polynomial }of $X$\emph{,
}since its crucial behavior, as a generalized Euler characteristic,
was first used by Serre in connection with the Weil conjectures (see
\cite{Du}).\\
(4) Many specializations of the $E$-polynomial have been studied
in the literature. There is, for example, the \textsl{weight }\emph{polynomial}
$W_{X}\left(y\right):=\sum_{k,p}(-1)^{k}w^{k,p}(X)\,y^{p}$, using
the graded pieces of the weight filtration $w^{k,p}(X):=\dim_{\mathbb{C}}Gr_{p}^{W_{\mathbb{C}}}H^{k}(X,\mathbb{C})$
(see \cite{DL}). This is a specialization of the $E$-polynomial
since $W_{X}(y)=E_{X}(y,y)$. Also, Hirzebruch's \emph{$\chi_{y}$-genus}
and the \emph{signature} $\sigma$ of a complex manifold $X$ are
given, in terms of $E_{X}(u,v)$, as: $\chi_{y}(X)=E_{X}(-y,1)$ and
$\sigma(X)=E_{X}(-1,1)$, respectively (see Hirzebruch \cite{Hir}).
\end{rem}

We now collect some well known important properties of these polynomials,
for later use. 
\begin{prop}
\label{prop:polynomials}For a quasi-projective variety $X$, we have:

\begin{enumerate}
\item The polynomials $\mu_{X}$ and $E_{X}$ are symmetric in the variables
$u$ and $v$; in particular, if $h^{k,p,q}(X)\neq0$ then $h^{k,q,p}(X)\neq0$; 
\item Let $h^{k,p,q}(X)\neq0$. Then $p,q\leq k$. Moreover, if $X$ is
\emph{smooth} then $p+q\geq k$; if $X$ is \emph{projective} then
$p+q\leq k$. In particular, if $X$ is a compact Kähler manifold
$p+q=k$; 
\item The (topological) Euler characteristic $\chi(X)$ is given by $\chi(X)=E_{X}(1,1)$; 
\item The Serre polynomial (compactly supported $E$-polynomial) $E_{X}^{c}$,
is additive for stratifications of $X$ by locally closed subsets,
and its degree is equal to $2\dim_{\mathbb{C}}X$. 
\item All polynomials $\mu_{X}$, $P_{X}$ and $E_{X}$ are multiplicative
under cartesian products. 
\end{enumerate}
\end{prop}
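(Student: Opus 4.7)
The plan is to handle each of the five items in turn, deriving them from the structural results collected in Proposition \ref{prop:Mhstructures} together with two standard inputs: bounds on the weight filtration inherent in Deligne's construction, and the existence of a long exact sequence for compactly supported cohomology in the category of mixed Hodge structures.

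First I would dispatch items (1), (3), and (5), all of which are essentially formal. For (1), symmetry of $\mu_X$ in $u,v$ is immediate from complex conjugation $H^{k,q,p}\cong\overline{H^{k,p,q}}$ of Proposition \ref{prop:Mhstructures}(1), which forces $h^{k,p,q}=h^{k,q,p}$. For (3), one observes that the $(p,q)$-decomposition refines the Betti numbers, $\sum_{p,q}h^{k,p,q}(X)=b_k(X)$, so that $E_X(1,1)=\sum_k(-1)^k b_k(X)=\chi(X)$. For (5), the K\"unneth isomorphism preserves both filtrations by Proposition \ref{prop:Mhstructures}(3) and therefore descends to their graded pieces; a dimension count then yields $\mu_{X\times Y}=\mu_X\cdot\mu_Y$, and the analogous factorizations for $P$ and $E$ follow by specialization.

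For item (2), the bound $p\leq k$ is built into Deligne's construction, since the Hodge filtration on $H^k(X,\mathbb{C})$ vanishes in degrees $p>k$; the bound $q\leq k$ then follows by complex conjugation using (1). The sharper restrictions in the smooth and projective cases are Deligne's theorems in \cite{De1,De2}: weights on $H^k$ of a smooth variety are $\geq k$, whereas weights on $H^k$ of a projective variety are $\leq k$. Combining both statements forces purity of weight $k$ on any compact K\"ahler manifold.

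The main obstacle is item (4). I would invoke Deligne's result that, for a closed embedding $Z\hookrightarrow X$ with open complement $U$, the long exact sequence
\[
\cdots\to H_c^k(U)\to H_c^k(X)\to H_c^k(Z)\to H_c^{k+1}(U)\to\cdots
\]
is exact in the category of mixed Hodge structures; since exactness is inherited by each graded piece $H_c^{k,p,q}$, the alternating sum of mixed Hodge numbers vanishes and one obtains $E_X^c=E_U^c+E_Z^c$ in $\mathbb{Z}[u,v]$. Iterating over any finite locally closed stratification then yields full additivity. For the degree, additivity reduces us to a smooth irreducible $X$ of dimension $n$; Poincar\'e duality (Proposition \ref{prop:Mhstructures}(5)) identifies the top piece $H_c^{2n,n,n}(X)$ with the dual of $H^{0,0,0}(X)\cong\mathbb{C}$, so $E_X^c$ contains the monomial $u^n v^n$ and hence has degree exactly $2n$.
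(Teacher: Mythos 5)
Your proposal is correct and follows essentially the same route as the paper: items (1), (3), (5) are derived from Proposition \ref{prop:Mhstructures} exactly as the authors do, while for items (2) and (4) the paper simply cites \cite{PS} and you instead sketch the standard arguments behind those citations (Deligne's weight bounds, and the compactly supported long exact sequence of a closed--open decomposition in the category of mixed Hodge structures, plus Poincar\'e duality for the degree statement). No substantive difference in method, and no gaps worth flagging.
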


\begin{proof}
(1) follows from item (1) of Proposition \ref{prop:Mhstructures}.
Item (2) is proved in \cite[Proposition 4.20]{PS} and \cite[Theorem 5.39]{PS}.
Item (3) is immediate from the definition. The proof of (4) can be
found in \cite[Corollary 5.57]{PS}, and (5) follows directly from
\ref{prop:Mhstructures}(4).
\end{proof}
A common feature of the varieties in this paper is that their mixed
Hodge structure is ``diagonal'': for each $k$, the only non-zero
mixed Hodge numbers are $h^{k,p,q}$ with $p=q$. 
\begin{defn}
A quasi-projective variety $X$ is said to be \emph{balanced} or of
\emph{Hodge-Tate} \emph{type} if for every non-negative integer $k\in\mathbb{N}_{0}$,
and all $p\neq q$, $h^{k,p,q}(X)=0$. In other words, if $h^{k,p,q}(X)\neq0$
then $q=p$. We call $p+q$ the \emph{total weight} of $H^{k,p,q}(X)$.
\end{defn}

\begin{example}
(1) If $X$ is connected, $H^{0}(X,\mathbb{C})\cong\mathbb{C}$ has
always a pure Hodge structure, with trivial decomposition $H^{0}(X,\mathbb{C})=H^{0,0,0}(X)$.
Dually, when $X$ is also smooth, the compactly supported cohomology
is also a trivial decomposition $H_{c}^{2n}\left(X,\mathbb{C}\right)=H_{c}^{2n,n,n}\left(X\right).$\\
\label{exa:C*}(2) $X=\mathbb{C}^{n}$ is a (non compact) Kähler manifold
with cohomology only in degree zero. By the above, it has trivial
pure Hodge structure: $H^{*}\left(X,\mathbb{C}\right)=H^{0}\left(X,\mathbb{C}\right)=H^{0,0,0}\left(X\right)$
and so 
\[
\mu_{\mathbb{C}^{n}}\left(t,u,v\right)=1,\quad\quad\mu_{\mathbb{C}^{n}}^{c}(t,u,v)=t^{2n}u^{n}v^{n},
\]
where the compactly supported version follows from Poincaré duality.\\
(3) Let $X=\mathbb{C}^{*}=\mathbb{C}\setminus\{0\}$. Although Kähler,
its cohomology has no pure Hodge structure, since $\dim_{\mathbb{C}}H^{1}(X,\mathbb{C})=1$.
Being smooth, using Proposition \ref{prop:Mhstructures}(1)-(2), the
only non-zero $h^{1,p,q}$ is $h^{1,1,1}$, so $h^{0,0,0}=h^{1,1,1}=1$
and all other Hodge numbers vanish. We then get 
\begin{eqnarray*}
\mu_{\mathbb{C}^{*}}\left(t,u,v\right) & = & 1+tuv,
\end{eqnarray*}
and the only non-zero $h_{c}^{k,p,q}$ are $h_{c}^{2,1,1}=h_{c}^{1,0,0}=1$,
by Poincaré duality. Thus, $\mu_{\mathbb{C}^{*}}^{c}(t,u,v)=t^{2}uv+t$
and $E_{\mathbb{C}^{*}}^{c}(u,v)=uv-1$. Observe that this is compatible
with the decomposition into locally closed subsets $\mathbb{C}^{1}=\mathbb{C}^{*}\sqcup\mathbb{C}^{0}$
as in Proposition \ref{prop:polynomials}(4). Hence, $\mathbb{C}^{n}$
and $(\mathbb{C}^{*})^{n}$ are examples of balanced varieties. For
a simple example of a non-balanced variety we can take an elliptic
curve, or any compact Riemann surface of positive genus.\\
(4) \label{exa:HT}Consider the total space $X$ of the trivial line
bundle over an elliptic curve $X\cong(\mathbb{C}/\Lambda)\times\mathbb{C}$,
where $\Lambda$ is a rank two lattice in $(\mathbb{C},+)$. It is
easy to see that $X$ is \emph{real analytically} isomorphic to $\left(\mathbb{C}^{*}\right)^{2}$
(but not complex analytically or algebraically isomorphic). From the
Künneth isomorphism and considerations analogous to Example \ref{exa:C*}
we get: 
\begin{eqnarray*}
\mu_{\left(\mathbb{C}^{*}\right)^{2}}\left(t,u,v\right) & = & \left(1+tuv\right)^{2}=1+2tuv+t^{2}u^{2}v^{2},\\
\mu_{X}\left(t,u,v\right) & = & (1+tu)(1+tv)=1+t(u+v)+t^{2}uv.
\end{eqnarray*}
Indeed, $\left(\mathbb{C}^{*}\right)^{2}$ is balanced whereas the
cohomology of $X$ is pure.
\end{example}

\begin{rem}
(1) The last example is a very special case (the genus 1, rank 1 case)
of the non-abelian Hodge correspondence mentioned in the Introduction,
which produces diffeomorphisms between (Zariski open subsets) of moduli
spaces of flat connections and certain moduli spaces of Higgs bundles
over a given Riemann surface. The fact that one diffeomorphism type
is balanced (the flat connection side of the correspondence) and the
other is pure is a general feature (see \cite{HRV,HLRV}).
\\
(2) \label{rem:balanced}If $X$ is balanced, its $E$-polynomial
depends only on the product $uv$, so it is common to adopt the change
of variables $x\equiv uv$. When written in this variable, the degree
of $E_{X}^{c}(x)$ is now equal to $\dim_{\mathbb{C}}X$, instead
of $2\dim_{\mathbb{C}}X$. 
\end{rem}

\section{Separably pure, elementary and round varieties}

In this section, we collect many properties of MHS that are necessary
later on. We also describe the types of Hodge structures that allow
the recovery of the mixed Hodge polynomial given the $E$- or the
Poincaré polynomial (Theorem \ref{thm:recover-mixed}), and concentrate
on the case of round varieties, which are the Hodge types of our character
varieties. We tried to be self-contained for the benefit of researchers
in the field of character varieties or Higgs bundles that may not
be familiar with MHS. 

\subsection{Elementary and separably pure varieties}

The mixed Hodge structures on the cohomology of a given quasi-projective
variety $X$ may be trivial, ie, the decomposition of every $H^{k}(X,\mathbb{C})$
is the trivial one, and many such examples are considered here. When
this happens, the only non-zero $h^{k,p,q}(X)$ satisfy $q=p$ (by
Proposition \ref{prop:polynomials}(1)) and much of what can be said
about the cohomology can be transported to mixed Hodge structures.
Adapting some notions from \cite{DL} (who worked with the weight
polynomial), we introduce the following terminology.
\begin{defn}
Let $X$ be a quasi-projective variety. $X$ (or its cohomology) is
called \emph{elementary} if its mixed Hodge structures are trivial
decompositions of the cohomology, so that for every $k\in\mathbb{\mathbb{N}}$
there is only one $p\in\mathbb{N}$ such that $h^{k,p,p}(X)\neq0$
(and $h^{k,p,q}(X)\neq0$ for $q\neq p$).\\
$X$ is said to be \emph{separably pure} if the mixed Hodge structure
on each $H^{k}\left(X,\mathbb{C}\right)$ is in fact pure of total
weight $w_{k}$, and such that $w_{j}\neq w_{k}$ for every $j\neq k$.
\end{defn}

\begin{rem}
(1) Note that $X$ is elementary if it is \emph{balanced} and there
is a \emph{weight function} $k\mapsto p_{k},$ (defined only for those
$k\in\mathbb{N}_{0}$ with $H^{k}(X,\mathbb{C})\neq0$) such that
$h^{k,p,q}=0$ for every pair $(p,q)$ not equal to $(p_{k},p_{k})$.
In this case:
\begin{equation}
Gr_{2p_{k}}^{W_{\mathbb{C}}}H^{k}(X,\mathbb{C})=H^{k,p_{k},p_{k}}\left(X\right)=H^{k}(X,\mathbb{C}).\label{eq:elementary-Hodge}
\end{equation}
A general weight function is not enough to recover $\mu_{X}$ from
the weight or the $E$-polynomials (different degrees of cohomology
may have equal total weights). However, this can be done (see Theorem
\ref{thm:recover-mixed}) if the weight function $k\mapsto p_{k}$
is injective, in which case the equality \eqref{eq:elementary-Hodge}
takes the stronger form: $\bigoplus_{m}Gr_{2p_{k}}^{W_{\mathbb{C}}}H^{m}(X,\mathbb{C})=H^{k,p_{k},p_{k}}\left(X\right)=H^{k}(X,\mathbb{C})$.\\
(2) In a pure Hodge structure of total weight $k$ on $H^{k}\left(X,\mathbb{Q}\right)$
the only non-zero weight summand is $Gr_{2k}^{W}H^{k}\left(X,\mathbb{Q}\right)$.
So, a pure total cohomology is separably pure, but not conversely,
as the case $\mathbb{C}^{*}$ shows (Example \ref{exa:C*}).\\
(3) When $X$ is separably pure, instead of the weight function, one
can define a \emph{degree function} $(p,q)\mapsto k=k(p,q)$ (defined
\emph{only} on pairs $(p,q)$ such that $h^{k,p,q}(X)\neq0$). Noting
that, in fact, the degree $k$ only depends on the total weight $p+q$
(being separably pure) we can write this as $(p,q)\mapsto k_{p+q}$. 
\end{rem}

In this article, most varieties are both separably pure and balanced,
and an alternative characterization follows. 
\begin{lem}
\label{lem:elm+inj}A quasi-projective variety $X$ is separably pure
and balanced if and only if it is elementary and its weight function
$k\mapsto p_{k}$ is injective.
\end{lem}

\begin{proof}
If $X$ is separably pure, the total weight in each $H^{k}(X,\mathbb{C})$
has to be constant. But if $X$ is also balanced, given $k$, all
$h^{k,p,q}(X)$ vanish except for a unique pair $(p,q)=(p_{k},p_{k})$,
so we have an assignment $k\mapsto p_{k}$ proving that $X$ is elementary.
Moreover, since the total weights are different for distinct $k$,
the weight function is injective. The converse statement is easy since
an elementary variety is Hodge-Tate and an injective weight function
implies injectivity for total weights. 
\end{proof}
\begin{example}
A family of balanced and pure varieties are the smooth projective
toric varieties (hence these are elementary). Indeed, every such toric
variety $X$, where the $\mathbb{C}^{*}$ action has $d_{j}$ orbits
in (complex) dimension $j=0,\cdots,\dim_{\mathbb{C}}X$, has (See
\cite[Ex. 5.58]{PS}): 
\[
\mu_{X}(t,u,v)=\sum_{j=0}^{n}d_{j}(t^{2}uv-1)^{j},
\]
and the weight function is $2j\mapsto j$ (the weights being $(j,j)$).
For example, the Hodge-Deligne polynomials of projective spaces are
$\mu_{\mathbb{P}_{\mathbb{C}}^{n}}\left(t,u,v\right)=\sum_{j=0}^{n}t^{2j}u^{j}v^{j}.$ 
\end{example}

As in the case of the complex affine multiplicative group $\mathbb{C}^{*}$,
more general complex affine algebraic groups are balanced, but not
necessarily pure or separably pure. 
\begin{example}
\label{exa:mHD-GLn}The Poincaré polynomial of $GL(n,\mathbb{C})$
is well known, given by\\
 $P_{GL(n,\mathbb{C})}(t)=\prod_{j=1}^{n}\left(1+t^{2j-1}\right)$.
Also, by \cite[Theorem 9.1.5]{De2}, we have: 
\begin{eqnarray*}
\mu_{GL(n,\mathbb{C})}\left(t,u,v\right) & = & \prod_{j=1}^{n}\left(1+t^{2j-1}u^{j}v^{j}\right).
\end{eqnarray*}
For example, $\mu_{GL(3,\mathbb{C})}\left(t,x\right)=1+tx+t^{3}x^{2}+t^{4}x^{3}+t^{5}x^{3}+t^{6}x^{4}+t^{9}x^{6}$,
(writing $x=uv$, see Remark \ref{rem:balanced}(2)). So, $GL(3,\mathbb{C})$
is elementary (hence balanced) but \emph{not separably pure}: both
degrees $4$ and $5$ have associated total weight $6$ (the terms
with $x^{3}$), so $GL(n,\mathbb{C})$ is not separably pure, for
$n\geq3$. Moreover, the same argument readily shows that $GL(n,\mathbb{C})$
is \emph{not elementary} for $n\geq5$. 
\end{example}

The above examples show that this ``yoga of weights'', as alluded
by Grothendieck, is very useful in understanding general properties
of certain classes of varieties. When we know that a particular variety
$X$ has a degree or a weight function as above, we can determine
the full collection of triples $(k,p,q)$, such that $h^{k,p,q}(X)\neq0$.
\begin{figure}
\includegraphics[scale=0.4]{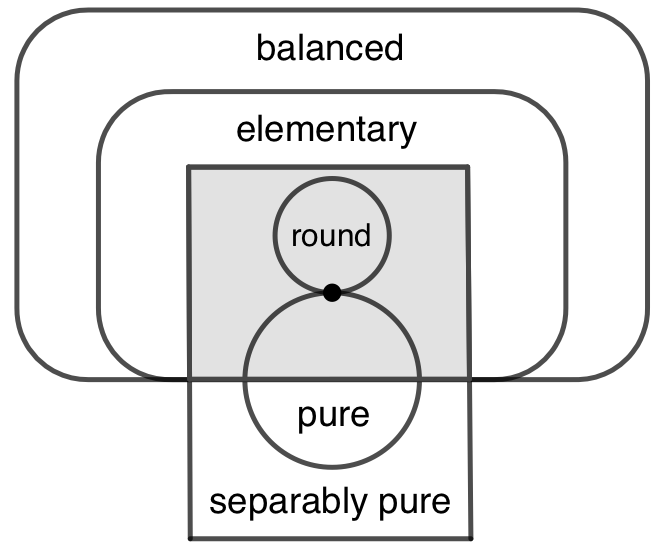}\label{fig:MHS}\caption{Venn diagram with several classes of MHS}
\end{figure}

In Figure \ref{fig:MHS}, the shaded area illustrates Lemma \ref{lem:elm+inj};
for the definition of round, see subsection 3.2, below. The next result
shows that elementary and separably pure are indeed the correct notions
to be able to determine the mixed Hodge polynomial from the Poincaré
or the $E$-polynomial, respectively.
\begin{thm}
\label{thm:recover-mixed}Let $X$ be a quasi-projective variety of
dimension $n$. Then: 

\begin{enumerate}
\item If $X$ is elementary, with known weight function, its Poincaré polynomial
determines its Hodge-Deligne polynomial. 
\item If $X$ is separably pure, with known degree function, its $E$-polynomial
determines its Hodge-Deligne polynomial. 
\end{enumerate}
\end{thm}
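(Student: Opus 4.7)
The plan is to unwind the definitions and verify that, under each hypothesis, every mixed Hodge number $h^{k,p,q}(X)$ is individually determined by the given data. In both cases, I expect the reconstruction to reduce to a simple monomial substitution inside the known polynomial.

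For (1), since $X$ is elementary with weight function $k \mapsto (p_k, p_k)$, Proposition \ref{prop:elementary-hodge} says that the only nonzero Hodge numbers are $h^{k, p_k, p_k}(X)$, and that these coincide with the Betti numbers $b_k(X) = \dim_{\mathbb{C}} H^k(X,\mathbb{C})$. Substituting into the definition of $\mu_X$, one should obtain
\[
\mu_X(t,u,v) \;=\; \sum_{k} b_k(X)\, t^k\, u^{p_k} v^{p_k},
\]
which is recovered from $P_X(t) = \sum_k b_k(X)\, t^k$ simply by the monomial substitution $t^k \mapsto t^k u^{p_k} v^{p_k}$. Since the weight function is assumed known, this procedure is explicit.

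For (2), if $X$ is separably pure with degree function $(p,q) \mapsto k_{p+q}$, then for each pair $(p,q)$ that is a weight of $X$ there is exactly one degree $k = k_{p+q}$ with $h^{k,p,q}(X) \neq 0$. Starting from
\[
E_X(u,v) \;=\; \sum_{k,p,q} (-1)^k\, h^{k,p,q}(X)\, u^p v^q,
\]
the coefficient of $u^p v^q$ should therefore be either $0$ or exactly $(-1)^{k_{p+q}}\, h^{k_{p+q},p,q}(X)$, with no cancellations across distinct degrees. Reading off these coefficients then recovers every Hodge number, and one concludes
\[
\mu_X(t,u,v) \;=\; \sum_{p,q} (-1)^{k_{p+q}}\bigl[u^p v^q\bigr] E_X(u,v)\cdot t^{k_{p+q}} u^p v^q,
\]
i.e.\ $\mu_X$ is the image of $E_X$ under the substitution $u^p v^q \mapsto (-1)^{k_{p+q}} t^{k_{p+q}} u^p v^q$.

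The one subtle point to verify in case (2) is precisely the absence of cancellations in the $E$-polynomial: this is what the \emph{separability} of the pure weights is designed to guarantee, since distinct cohomology degrees $j \neq k$ carry distinct total weights $w_j \neq w_k$, so each monomial $u^p v^q$ in $E_X$ originates from a single cohomology degree. In case (1), the parallel observation is that being elementary forces a single Hodge component $h^{k,p_k,p_k} = b_k$ in each degree. No serious obstacle is expected; both halves should follow directly from these bookkeeping observations together with Proposition \ref{prop:elementary-hodge}.
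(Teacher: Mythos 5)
Your proof is correct and follows essentially the same route as the paper: in case (1) the trivial decomposition forces $\mu_X(t,u,v)=\sum_k b_k(X)\,t^k u^{p_k}v^{p_k}$, and in case (2) the injectivity of the degree function rules out cancellations among the coefficients $a_{p,q}$ of $E_X$, so that $\mu_X(t,u,v)=\sum_{p,q}a_{p,q}(-t)^{k_{p+q}}u^p v^q$. Your explicit remark on the absence of cancellations is exactly the point the paper compresses into the phrase that total weights are in one-to-one correspondence with cohomological degrees.
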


\begin{proof}
(1) Suppose the Poincaré polynomial of $X$ is $P_{X}\left(t\right)=\sum_{k}b_{k}t^{k}$
and the weight function is $k\mapsto(p_{k},p_{k})$. Then, since the
only non-trivial mixed Hodge pieces are $H^{k,p_{k},p_{k}}(X)$, we
get $\mu_{X}\left(t,u,v\right)=\sum_{k}b_{k}t^{k}u^{p_{k}}v^{p_{k}}$.

(2) Similarly, writing $E_{X}(u,v)=\sum_{p,q}a_{p,q}u^{p}v^{q}$,
and the degree function as $(p,q)\mapsto k_{p+q}$, since the total
weights are in one-to-one correspondence with the degrees of cohomology,
we obtain $\mu_{X}(t,u,v)=\sum_{k,p,q}a_{p,q}(-t)^{k_{p+q}}u^{p}v^{q}$. 
\end{proof}

\subsection{Round varieties}

From Theorem \ref{thm:recover-mixed}, if a variety $X$ is both balanced
\emph{and} separably pure, then $\mu_{X}$ can be recovered from either
$E_{X}$ or $P_{X}$, knowing their degree/weight functions. A specially
interesting case is the following.

\begin{defn}
\label{def:Round}Let $X$ be a quasi-projective variety. If the only
non-zero Hodge numbers are of type $h^{k,k,k}\left(X\right)$, $k\in\{0,\cdots,2\dim_{\mathbb{C}}X\}$,
we say that $X$ is \textsl{round}. 
\end{defn}

In other words, a round variety is both elementary and separably pure
and its only $k$-weights have the form $(k,k)$. Round varieties
are referred as ``minimaly pure'' balanced varieties in Dimca-Lehrer
(see \cite[Def. 3.1(iii)]{DL}). 

\begin{rem}
In general, cartesian products of elementary varieties are not elementary,
and similarly for separably pure varieties. For instance, using Example
\ref{exa:mHD-GLn} with $n=2$, we see that $GL(2,\mathbb{C})\times GL(2,\mathbb{C})$
is not separably pure. On the other hand, the following result holds
for round varieties.
\end{rem}

\begin{prop}
\label{prop:round-necessary}Let $X$ and $Y$ be round varieties.
Then: 

\begin{enumerate}
\item The Hodge-Deligne polynomial of $X$ reduces to a one variable polynomial,
and can be reconstructed from either the $E$ or the Poincaré polyonmial:
\begin{eqnarray*}
\mu_{X}\left(t,u,v\right) & = & P_{X}\left(tuv\right)=E_{X}\left(-tu,v\right).
\end{eqnarray*}
\item The cartesian product $X\times Y$ is round. 
\end{enumerate}
\end{prop}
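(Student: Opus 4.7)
The plan is to reduce everything to the defining property of roundness, namely that $h^{k,p,q}(X)=0$ unless $p=q=k$, and then use the multiplicativity of mixed Hodge structures under products.

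For part (1), I would start from Definition \ref{def:Hodgepol} and simply insert the vanishing condition into the triple sum defining $\mu_X$. Only the diagonal terms with $p=q=k$ survive, so
\[
\mu_X(t,u,v)=\sum_{k\geq 0} h^{k,k,k}(X)\,(tuv)^k.
\]
The point is now that for a round variety the Betti numbers coincide with these diagonal Hodge numbers: $b_k(X)=\dim_\mathbb{C} H^k(X,\mathbb{C})=h^{k,k,k}(X)$, because $H^{k,k,k}(X)$ exhausts the whole cohomology in degree $k$ (as in Proposition \ref{prop:elementary-hodge}). Substituting $t\mapsto tuv$ in $P_X$ therefore yields exactly the expression above. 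For the $E$-polynomial identity, the same vanishing gives $E_X(u,v)=\sum_k(-1)^k h^{k,k,k}(X)(uv)^k$, so this is a polynomial in a single variable $x=uv$; evaluating at $-tuv$ produces $\sum_k h^{k,k,k}(X)(tuv)^k$, matching $\mu_X$. There is no real obstacle here, just a careful bookkeeping of signs.

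For part (2), the natural tool is the Künneth isomorphism, which by Proposition \ref{prop:Mhstructures}(3) respects mixed Hodge structures. This gives
\[
h^{k,p,q}(X\times Y)=\sum_{\substack{k_1+k_2=k\\ p_1+p_2=p\\ q_1+q_2=q}} h^{k_1,p_1,q_1}(X)\,h^{k_2,p_2,q_2}(Y).
\]
Since $X$ and $Y$ are round, each nonzero term on the right forces $p_i=q_i=k_i$ for $i=1,2$, and then summing yields $p=q=k$. Hence $h^{k,p,q}(X\times Y)$ can be nonzero only for $p=q=k$, so $X\times Y$ is round.

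Neither step presents a real obstacle; the only thing to be vigilant about is the sign convention in the $E$-polynomial identity, where the factor $(-1)^k$ from the definition of $E_X$ is absorbed by the substitution $x\mapsto -tuv$. If desired, (2) can alternatively be deduced from the multiplicativity of $\mu$ under products (Proposition \ref{prop:polynomials}(5)) combined with part (1): $\mu_{X\times Y}(t,u,v)=P_X(tuv)P_Y(tuv)=P_{X\times Y}(tuv)$ is a polynomial in $tuv$ alone, which exactly expresses roundness.
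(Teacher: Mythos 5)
Your proposal is correct and follows essentially the same route as the paper: part (1) is the same direct computation from the vanishing of off-diagonal Hodge numbers, and your part (2) via the K\"unneth decomposition of Hodge numbers is just a more explicit version of the paper's one-line deduction from part (1) together with the multiplicativity of $\mu$ (which you also note as an alternative). No gaps.
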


\begin{proof}
(1) By definition, if $X$ is round, we can write: 
\[
\mu_{X}(t,u,v)=\sum_{k\geq0}h^{k,k,k}(X)\,t^{k}u^{k}v^{k},
\]
so that $\mu_{X}$ is a polynomial in $tuv$. Moreover, its Betti
numbers are $b_{k}(X)=h^{k,k,k}(X)$ giving the first equality. The
second follows from $E_{X}(x)=\sum_{k\geq0}(-1)^{k}h^{k,k,k}(X)\,x^{k}$.

(2) This follows at once from (1) and from Proposition \ref{prop:polynomials}(5).
\end{proof}
\begin{rem}
\label{rem:duality}(1) If $X$ satisfies Poincaré duality on MHS,
and $\dim_{\mathbb{C}}X=n$, one has 
\[
\mu_{X}^{c}(t,u,v)=(t^{2}uv)^{n}\mu_{X}({\textstyle \frac{1}{t}},{\textstyle \frac{1}{u}},{\textstyle \frac{1}{v}}),\quad\quad P_{X}^{c}(t)=t^{2n}P_{X}({\textstyle \frac{1}{t}}),\quad\quad E_{X}^{c}(u,v)=(uv)^{n}E_{X}({\textstyle \frac{1}{u},{\textstyle \frac{1}{v}}}).
\]
In particular $\chi(X)=E_{X}^{c}(1,1)$. If $X$ is additionally round,
analogously to Proposition \eqref{prop:round-necessary}, $\mu_{X}^{c}$
can be reconstructed from $P_{X}^{c}$ and $E_{X}^{c}$ as: 
\[
\mu_{X}^{c}(t,u,v)=(uv)^{-n}P_{X}^{c}(tuv)=(-t)^{n}\,E_{X}^{c}(-tu,v).
\]
(2) A sufficient condition for roundness is the following: If $X$
is balanced and separably pure and its cohomology \emph{has no gaps},
in the sense that for every $k\in\mathbb{N}$, the condition $H^{k}(X,\mathbb{C})\neq0$
implies $H^{k-1}(X,\mathbb{C})\neq0$, then $X$ is round. This is
easy to see from Lemma \ref{lem:elm+inj} and the restrictions on
weights (Proposition \ref{prop:polynomials}(2)). 
\end{rem}

\section{\textbf{Cohomology and Mixed Hodge Structures for Finite Quotients}}

Let $F$ be a finite group and $X$ a complex quasi-projective $F$-variety.
In this section, we outline some results on the cohomology and mixed
Hodge structures of quotients of the form $X^{r}/F$, where $F$ acts
diagonally on the cartesian product $X^{r}$, for general $r\geq1$.
Of special relevance is a formula, in Corollary \ref{cor:quotmhs},
for the Hodge-Deligne polynomial of $X^{r}/F$ for an elementary variety
$X$ whose cohomology is a simple exterior algebra.

\subsection{Equivariant mixed Hodge structures}

The MHS of the ordinary quotient $X/F$ is related with the one of
$X$ and its $F$-action, as follows. Since $F$ acts algebraically
on $X$, it induces an action on its cohomology ring preserving the
degrees, and, by Proposition \ref{prop:Mhstructures}(1), the mixed
Hodge structures. Therefore, $H^{k,p,q}\left(X\right)$ and $Gr_{p+q}^{W_{\mathbb{C}}}H^{k}\left(X,\mathbb{C}\right)$
are also $F$-modules. Denoting these by $\left[Gr_{p+q}^{W}H^{k}\left(X,\mathbb{C}\right)\right]$
and $\left[H^{k,p,q}\left(X\right)\right]$, and calling them \emph{equivariant
MHS}, one may codify this information in polynomials with coefficients
belonging to the representation ring of $F$, $R(F)$ (cf. \cite{DL}).
\begin{defn}
\label{def:equipol}The \textit{equivariant mixed Hodge polynomial}
is defined as:

\[
\mu_{X}^{F}\left(t,u,v\right)=\sum_{k,p,q}\left[H^{k,p,q}\left(X\right)\right]t^{k}u^{p}v^{q}\in R(F)\left[t,u,v\right].
\]
Evaluating at $t=-1$, gives us the \textit{equivariant E-polynomial}:
\[
E_{X}^{F}\left(u,v\right)=\sum_{k,p,q}(-1)^{k}\left[H^{k,p,q}\left(X\right)\right]u^{p}v^{q}\in R(F)\left[u,v\right].
\]
\end{defn}

As in the non-equivariant case, we adopt the change of variable $x=uv$
when $X$ is balanced. As in Proposition \ref{prop:Mhstructures},
several simple properties can be deduced. 
\begin{prop}
\label{prop:equMhstructures}Let $X$ be a quasi-projective $F$-variety,
for a finite group $F$, and let $\mathcal{P}_{X}^{F}$ be one of
the polynomials in Definition \ref{def:equipol}. Then 

\begin{enumerate}
\item $\mathcal{P}_{X}$ is obtained by replacing each representation in
$\mathcal{P}_{X}^{F}$ by its dimension; 
\item The Künneth formula and Poincaré Duality, for $X$ smooth, are compatible
with equivariant MHS:
\begin{eqnarray*}
\mathcal{P}_{X\times Y}^{F} & = & \mathcal{P}_{X}^{F}\,\varotimes\,\mathcal{P}_{Y}^{F}\\
\left[Gr_{p+q}^{W_{\mathbb{C}}}H^{k}\left(X,\mathbb{C}\right)\right] & = & \left[\left(Gr_{2n-\left(p+q\right)}^{W_{\mathbb{C}}}H_{c}^{2n-k}\left(X,\mathbb{C}\right)\right)^{*}\right]\\
\left[H^{k,p,q}\left(X\right)\right] & = & \left[\left(H_{c}^{2n-k,n-p,n-q}\left(X\right)\right)^{*}\right],
\end{eqnarray*}
where $\varotimes$ means that we take tensor products of graded $F$-representations.
\end{enumerate}
\end{prop}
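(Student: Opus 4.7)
The plan is to obtain both parts as equivariant upgrades of the corresponding non-equivariant statements in Proposition~\ref{prop:Mhstructures}, by tracking the $F$-action through the standard isomorphisms used in loc.~cit.

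First I would settle (1). The map $\dim\colon R(F) \to \mathbb{Z}$ sending a virtual representation to its dimension is $\mathbb{Z}$-linear, and applied coefficient-wise to $\mu_X^F$ (or $E_X^F$) it sends the class $[H^{k,p,q}(X,\mathbb{C})]$ to $h^{k,p,q}(X)$ by definition; hence $\mathcal{P}_X = \dim\mathcal{P}_X^F$.

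For the Künneth part of (2), I would observe that the diagonal action of $F$ on $X\times Y$ makes the two algebraic projections $X\times Y \to X$ and $X\times Y \to Y$ $F$-equivariant, so the Künneth isomorphism is $F$-equivariant. Combined with Proposition~\ref{prop:Mhstructures}(3), this yields an isomorphism of graded $F$-modules
\[
H^{k,p,q}(X\times Y,\mathbb{C}) \cong \bigoplus_{\substack{k_1+k_2=k\\p_1+p_2=p\\q_1+q_2=q}} H^{k_1,p_1,q_1}(X,\mathbb{C}) \otimes H^{k_2,p_2,q_2}(Y,\mathbb{C}),
\]
which packages into $\mathcal{P}_{X\times Y}^F = \mathcal{P}_X^F \otimes \mathcal{P}_Y^F$. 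The same argument on $\mathrm{Gr}^W$-pieces gives the analogous statement for weight graded pieces.

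For the Poincaré duality part, I would invoke Proposition~\ref{prop:Mhstructures}(5): the non-equivariant duality arises from the cup product pairing with a fundamental class generating $H_c^{2n,n,n}(X,\mathbb{C}) \cong \mathbb{C}$. Since cup products are preserved by algebraic self-maps, the pairing is $F$-equivariant, and because $F$ acts by algebraic automorphisms on the connected smooth variety $X$, the one-dimensional $F$-module $H_c^{2n,n,n}(X,\mathbb{C})$ is trivial (cf.\ Remark~\ref{rem:H_0}). Hence the pairing is $F$-invariant and perfect, so the duality isomorphism of Proposition~\ref{prop:Mhstructures}(5) upgrades to an isomorphism of $F$-modules, yielding both displayed equations. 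The main obstacle I anticipate is precisely this last point—the $F$-equivariance of the fundamental class, i.e.\ the triviality of the $F$-representation on $H_c^{2n,n,n}(X,\mathbb{C})$; this is standard but should be stated carefully, since it ultimately rests on the fact that algebraic automorphisms of a smooth complex variety preserve the complex orientation. Once it is in place, the remainder is bookkeeping on top of Proposition~\ref{prop:Mhstructures}.
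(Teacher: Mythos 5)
Your proposal is correct and follows essentially the same route as the paper, which simply records that the K\"unneth and Poincar\'e duality isomorphisms are morphisms of $F$-modules and calls the verification easy. The only difference is that you carry out that verification explicitly --- in particular the triviality of the $F$-action on $H_{c}^{2n,n,n}(X,\mathbb{C})$, which is indeed the one point worth spelling out for the duality to come out in the stated form with the dual on the right-hand side.
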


\begin{proof}
(1) This follows immediately from the definition of dimension of representation.
For (2), it suffices to see the that the Künneth and Poincaré maps
are also morphisms in the category of $F$-modules, which is easily
checked.
\end{proof}

\subsection{Cohomology of finite quotients }

We recall some known facts concerning the usual and the compactly
supported cohomology of the quotient $X/F$. Consider its equivariant
cohomology, defined on rational cohomology by 
\[
H_{F}^{*}\left(X,\mathbb{Q}\right):=H^{*}\left(EF\times_{F}X,\mathbb{Q}\right),
\]
where $EF$ is the universal principal bundle over $BF$, the classifying
space of $F$, and $EF\times_{F}X$ is the quotient under the natural
action, which admits an algebraic map $EF\times_{F}X\stackrel{\pi}{\longrightarrow}X/F$.
Since $F$ is finite, so is the stabilizer of any point for the $F$
action, and the Vietoris-Begle theorem (see e.g. \textbf{\cite[page 344]{Sp}})
implies that the pullback $\pi^{*}:H^{*}\left(X/F,\mathbb{Q}\right)\to H^{*}\left(EF\times_{F}X,\mathbb{Q}\right)$
provides an isomorphism 
\begin{equation}
H^{*}\left(X/F,\mathbb{Q}\right)\simeq H_{F}^{*}\left(X,\mathbb{Q}\right).\label{eq:equico_equiv_quoco}
\end{equation}
Moreover, the fibration 
\[
\xymatrix{X\ar[r] & EF\times_{F}X\ar[r] & BF}
\]
has an induced Serre spectral sequence satisfying (see \cite{Mc},
for example) 
\[
E_{2}^{p,q}\cong H^{p}\left(BF,\,H^{q}\left(X\right)\right)\cong H^{p}\left(F,\,H^{q}\left(X\right)\right)\Rightarrow H_{F}^{p+q}\left(X,\mathbb{Q}\right).
\]
Since $F$ is finite, one can deduce $H^{p}\left(F,\,H^{q}\left(X\right)\right)=0$
for all $p>0$ and all $q$, since sheaf cohomology vanishes in degrees
higher than $\dim F$. Then the Serre spectral sequence converges
at the second step, and this gives (\cite{Br})\textit{}\footnote{For polynomials, the superscript $F$ means we are taking the equivariant
version, for vector spaces with an $F$ action, the superscript denotes
the fixed subspace.} 
\begin{equation}
H_{F}^{*}\left(X,\mathbb{Q}\right)\simeq H^{*}\left(X,\mathbb{Q}\right)^{F}.\label{eq:equico_equiv_invco}
\end{equation}
Combining Equations \eqref{eq:equico_equiv_quoco} and \eqref{eq:equico_equiv_invco},
one gets an isomorphism of graded vector spaces:\footnote{We thank Donu Arapura for a suggestion leading to a shorter proof
of Proposition \ref{prop:PD-compact}}
\begin{eqnarray}
H^{*}\left(X/F,\mathbb{Q}\right) & \cong & H^{*}\left(X,\mathbb{Q}\right)^{F}.\label{eq:equichm}
\end{eqnarray}

\begin{prop}
\label{prop:PD-compact}Let $F$ be a finite group and $X$ a smooth
quasi-projective $F$-variety. Then, the pullback of the quotient
map $\pi^{*}:H^{*}\left(X/F,\mathbb{Q}\right)\to H^{*}\left(X,\mathbb{Q}\right)$
is injective and has $H^{*}\left(X,\mathbb{Q}\right)^{F}$ as its
image.
\end{prop}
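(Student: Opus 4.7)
The plan is to identify the abstract isomorphism $H^{*}(X/F,\mathbb{Q})\cong H^{*}(X,\mathbb{Q})^{F}$ produced in \eqref{eq:equichm} with the pullback $\pi^{*}$ followed by the inclusion $H^{*}(X,\mathbb{Q})^{F}\hookrightarrow H^{*}(X,\mathbb{Q})$. If that identification can be made, both conclusions (injectivity of $\pi^{*}$ and description of its image) are immediate.

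The key step is a topological factorization of $\pi$. Choose a basepoint $e_{0}\in EF$ and set $\iota\colon X\to EF\times_{F}X$, $\iota(x)=[e_{0},x]$; different choices of $e_{0}$ yield homotopic maps since $EF$ is contractible, so $\iota^{*}$ is well-defined. Let $\rho\colon EF\times_{F}X\to X/F$ be the map induced by the $F$-equivariant projection $EF\times X\to X$. Then $\pi=\rho\circ\iota$. On cohomology, $\rho^{*}$ is precisely the Vietoris--Begle isomorphism of \eqref{eq:equico_equiv_quoco}, so it remains to analyse $\iota^{*}$.

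I would then argue that $\iota^{*}\colon H^{*}(EF\times_{F}X,\mathbb{Q})\to H^{*}(X,\mathbb{Q})$ is the edge homomorphism of the Serre spectral sequence of the fibration $X\to EF\times_{F}X\to BF$ already used in the derivation of \eqref{eq:equico_equiv_invco}. Because $|F|$ is invertible in $\mathbb{Q}$, that spectral sequence collapses at $E_{2}$, and the edge homomorphism realises the isomorphism $H_{F}^{*}(X,\mathbb{Q})\cong H^{*}(X,\mathbb{Q})^{F}$ as the composition of an isomorphism onto $H^{*}(X,\mathbb{Q})^{F}$ with the natural inclusion into $H^{*}(X,\mathbb{Q})$. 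Splicing everything together, $\pi^{*}=\iota^{*}\circ\rho^{*}$ is an isomorphism onto $H^{*}(X,\mathbb{Q})^{F}$ post-composed with the inclusion, which gives the claim.

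The most delicate point is to check that the abstract spectral-sequence identification of $H_{F}^{*}(X,\mathbb{Q})$ with $H^{*}(X,\mathbb{Q})^{F}$ is indeed the one induced by restriction to a fiber, rather than some other natural identification. If one wishes to avoid that bookkeeping, a cleaner alternative is a transfer argument: define $\mathrm{tr}\colon H^{*}(X,\mathbb{Q})\to H^{*}(X/F,\mathbb{Q})$ by averaging $\mathbb{Q}$-cochains over the $F$-orbits (well-defined since $|F|$ is invertible) and verify the two identities
\[
\pi^{*}\circ\mathrm{tr}=\sum_{g\in F}g^{*},\qquad \mathrm{tr}\circ\pi^{*}=|F|\cdot\mathrm{id}.
\]
The first shows that $\mathrm{Im}(\pi^{*})$ contains every $F$-invariant class (apply to $\beta\in H^{*}(X,\mathbb{Q})^{F}$ and divide by $|F|$), and since $\pi\circ g=\pi$ for all $g\in F$ the image is contained in $H^{*}(X,\mathbb{Q})^{F}$; the second yields injectivity of $\pi^{*}$. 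Either route uses the finiteness of $F$ and the $\mathbb{Q}$-coefficients in the same essential way.
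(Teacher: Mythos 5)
Your proposal is correct, and both of the routes you sketch differ genuinely from the paper's argument. The paper splits into cases: when $F$ acts freely it identifies $H^{*}(X/F)$ with $F$-invariant differential forms on $X$ via the manifold structure of $X/F$, and when the action is not free it repeats this with de Rham \emph{orbifold} cohomology. Your first route instead factors $\pi=\rho\circ\iota$ through the Borel construction and identifies $\iota^{*}$ with the edge homomorphism of the (collapsed) Serre spectral sequence; your second route uses the transfer. Both avoid orbifold machinery entirely and, notably, never use smoothness of $X$, so they prove a more general statement than the one the paper needs. The delicate point you flag in the first route (that the collapse isomorphism $H^{*}_{F}(X,\mathbb{Q})\cong H^{*}(X,\mathbb{Q})^{F}$ is realised by restriction to a fibre) is a standard property of edge homomorphisms, so that route closes. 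The one imprecision is in the second route: for a non-free action, singular simplices in $X/F$ need not lift to $X$, so the transfer cannot literally be defined by ``averaging cochains over orbits''; one should either construct it in Alexander--Spanier/\v{C}ech cohomology (which agrees with singular cohomology for quasi-projective varieties) or via the sheaf-theoretic trace $\pi_{*}\mathbb{Q}_{X}\to\mathbb{Q}_{X/F}$, as in Bredon's treatment of finite transformation groups. With that standard substitution the two identities you state hold and your deduction of injectivity and of the image is exactly right. Each approach buys something: the paper's is concrete and stays close to differential geometry, yours is more elementary in its inputs and applies to singular $F$-varieties as well.
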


\begin{proof}
Assume first that $F$ acts freely on $X$. Then, $X/F$ has a well
defined manifold structure, and one can realize the pullback in cohomology
by the pullback in differential forms. In particular, this shows that
the image of the pullback $\pi^{*}:H^{*}\left(X/F,\mathbb{Q}\right)\to H^{*}\left(X,\mathbb{Q}\right)$
is given by $H^{*}\left(X,\mathbb{Q}\right)^{F}$. Using \eqref{eq:equichm},
this means that the pullback map is bijective onto $H^{*}\left(X,\mathbb{Q}\right)^{F}$.

If $F$ does not act freely, the same argument can be reproduced for
the de Rham orbifold cohomology, in which representatives of orbifold
cohomology classes are sections of exterior powers of the orbifold
cotangent bundle (see \cite{ALR}). The result then follows because,
for manifolds such as $X$, the de Rham orbifold cohomology reduces
to the usual de Rham cohomology. 
\end{proof}
The isomorphism of \eqref{eq:equichm} can be obtained as the pullback
of the algebraic map $\pi:X\to X/F$. Given that pullbacks of algebraic
maps preserve mixed Hodge structures, we see that this isomorphism
respects MHS (see also \cite{LMN})
\begin{equation}
H^{*,*,*}\left(X,\mathbb{Q}\right)^{F}\cong H^{*,*,*}\left(X/F,\mathbb{Q}\right).\label{eq:isom-mhs}
\end{equation}
Moreover, since orbifolds satisfy Poincaré duality (see Satake \cite{Sa},
where these are called $V$-manifolds), this isomorphism is also valid
for the compactly supported cohomology.
\begin{cor}
\label{cor:quofrmeq}Let $X$ be a smooth complex quasi-projective
$F$-variety, for $F$ a finite group, and $\mathcal{P}_{X}\left(t,u,v\right)$
denote either the Poincaré, Hodge-Deligne or $E$-polynomials, for
the usual or the compactly supported cohomologies, Then, $\mathcal{P}_{X/F}\left(t,u,v\right)$
equals the coefficient of the trivial representation in $\mathcal{P}_{X}^{F}\left(t,u,v\right)$. 
\end{cor}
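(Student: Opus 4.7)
The plan is to upgrade the vector-space isomorphism provided by Proposition \ref{prop:PD-compact} to an isomorphism of mixed Hodge structures; once this is done, part (2) will follow formally from the fact that the multiplicity of the trivial representation in a finite-dimensional $\mathbb{Q}[F]$-module $V$ is $\dim_{\mathbb{Q}}V^{F}$.

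For (1) in the ordinary cohomology I would combine three ingredients. First, the algebraic quotient map $\pi:X\to X/F$ gives, by Proposition \ref{prop:Mhstructures}(2), a morphism of MHS $\pi^{*}:H^{*}(X/F,\mathbb{Q})\to H^{*}(X,\mathbb{Q})$, which by Proposition \ref{prop:PD-compact} is injective with image $H^{*}(X,\mathbb{Q})^{F}$. Second, each $g\in F$ acts on $H^{*}(X,\mathbb{Q})$ by MHS-morphisms, so the Hodge and weight filtrations are $F$-stable. Third, because $F$ is finite and we are in characteristic zero, the averaging operator $e=\tfrac{1}{|F|}\sum_{g\in F}g$ is an MHS-endomorphism projecting onto $H^{*}(X,\mathbb{Q})^{F}$; equivalently, taking $F$-invariants is exact on $\mathbb{Q}[F]$-modules and therefore commutes with passage to graded pieces of either filtration. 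Together these identify the MHS-pieces of $H^{*}(X/F,\mathbb{Q})$, via $\pi^{*}$, with the $F$-invariants of the corresponding MHS-pieces of $H^{*}(X,\mathbb{Q})$, proving the first half.

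For the compactly supported statement I would invoke Poincar\'{e} duality. Since $X$ is smooth and $F$ is finite, $X/F$ is an orbifold and also satisfies Poincar\'{e} duality compatibly with its MHS (the orbifold de Rham argument used in the proof of Proposition \ref{prop:PD-compact} carries over to the compactly supported version). Writing $n=\dim_{\mathbb{C}}X$, one then chains the isomorphisms
\begin{eqnarray*}
H_{c}^{k,p,q}(X/F,\mathbb{Q}) & \cong & \bigl(H^{2n-k,n-p,n-q}(X/F,\mathbb{Q})\bigr)^{*}\\
 & \cong & \bigl(H^{2n-k,n-p,n-q}(X,\mathbb{Q})^{F}\bigr)^{*}\\
 & \cong & \bigl(H^{2n-k,n-p,n-q}(X,\mathbb{Q})^{*}\bigr)^{F}\\
 & \cong & H_{c}^{k,p,q}(X,\mathbb{Q})^{F},
\end{eqnarray*}
where the successive steps use orbifold MHS-Poincar\'{e} duality for $X/F$, the ordinary-cohomology case already established, the canonical isomorphism $(V^{F})^{*}\cong(V^{*})^{F}$ for finite-dimensional rational $F$-modules (delivered by the averaging projector), and equivariant MHS-Poincar\'{e} duality for the smooth $X$ (Proposition \ref{prop:equMhstructures}(2)).

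For (2), the claim is then essentially tautological: for any finite-dimensional $\mathbb{Q}[F]$-module $V$ the multiplicity of the trivial representation in $[V]\in R(F)$ equals $\dim_{\mathbb{Q}}V^{F}$, so applying (1) triple-index by triple-index and summing against the monomials $t^{k}u^{p}v^{q}$, with signs $(-1)^{k}$ where relevant, converts $\mu_{X}^{F}$, $E_{X}^{F}$ and their compactly supported analogues into $\mu_{X/F}$, $E_{X/F}$, etc.; specialising $u=v=1$ recovers the Poincar\'{e} case. The main obstacle in executing the plan is really the filtration compatibility at the heart of (1): one must know that $(-)^{F}$ commutes with both the Hodge and the weight filtrations, and this is precisely the place where the finiteness of $F$, the rationality of the weight filtration, and the averaging projector come together essentially.
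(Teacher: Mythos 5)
Your argument is correct and follows essentially the same route as the paper: part (1) comes from the fact that the isomorphism of Proposition \ref{prop:PD-compact} is the pullback of an algebraic map and hence a morphism of mixed Hodge structures, the compactly supported case follows by orbifold Poincar\'{e} duality, and part (2) is the standard identification of the trivial-representation multiplicity with the dimension of the invariants. Your additional remarks on strictness via the averaging projector and the explicit duality chain merely spell out steps the paper leaves implicit.
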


\begin{proof}
Given Equation \eqref{eq:isom-mhs}, the cohomology of $X/F$ coincides
with the invariant part of the induced action $F\curvearrowright H^{*}(X,\mathbb{C})$,
and this is precisely the coefficient of the trivial representation
in each equivariant polynomial $\mathcal{P}_{X}^{F}$.
\end{proof}
\begin{cor}
\label{cor:round-quot}Let $X$ be a smooth complex quasi-projective
$F$-variety, as above. If $h^{k,p,q}(X/F)\neq0$ then $h^{k,p,q}(X)\neq0$.
Consequently, if $X$ is balanced, separably pure or round, then the
same is true for $X/F$.
\end{cor}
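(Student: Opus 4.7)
The plan is to derive everything from Corollary \ref{cor:quofrmeq}(1), which identifies the Hodge-graded pieces of $X/F$ with the $F$-invariant parts of those of $X$. The first assertion follows from this isomorphism almost immediately, and then the three structural properties (balanced, separably pure, round) are straightforward consequences of the vanishing statement.

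First, by Corollary \ref{cor:quofrmeq}(1), we have an isomorphism of graded vector spaces
\[
H^{k,p,q}(X/F,\mathbb{Q}) \cong H^{k,p,q}(X,\mathbb{Q})^{F}.
\]
Taking dimensions, $h^{k,p,q}(X/F) = \dim H^{k,p,q}(X,\mathbb{Q})^{F} \leq \dim H^{k,p,q}(X,\mathbb{Q}) = h^{k,p,q}(X)$. In particular, if the left-hand side is nonzero, then the invariant subspace is nonzero, so the ambient space is nonzero, giving $h^{k,p,q}(X) \neq 0$. This settles the main claim; no obstacle arises here since Corollary \ref{cor:quofrmeq} has already been established.

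For the structural consequences, I would argue by contrapositive using the definitions from Section 3. If $X$ is balanced, then $h^{k,p,q}(X) = 0$ whenever $p \neq q$ (Definition \ref{def:balanced}); by the inequality above, $h^{k,p,q}(X/F) = 0$ for $p \neq q$, so $X/F$ is balanced. If $X$ is separably pure, fix $k$ with $H^{k}(X/F,\mathbb{C}) \neq 0$; any $k$-weight $(p,q)$ of $X/F$ is also a $k$-weight of $X$, hence satisfies $p+q = w_k$ for the total weight $w_k$ attached to degree $k$ in $X$. Since $X$ is separably pure, the $w_k$ for distinct $k$ appearing in the cohomology of $X/F$ are still pairwise distinct, so $X/F$ is separably pure. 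If $X$ is round, the only nonvanishing Hodge numbers of $X$ are the diagonal ones $h^{k,k,k}(X)$ (Definition \ref{def:Round}), and the same vanishing transfers to $X/F$, so $X/F$ is round.

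There is essentially no hard step: the entire content is packaged in the previously proved isomorphism $H^{k,p,q}(X/F,\mathbb{Q}) \cong H^{k,p,q}(X,\mathbb{Q})^{F}$, and the three properties are preserved because each is defined purely by the vanishing of certain Hodge numbers, a condition obviously inherited when passing to subspaces. If anything required care, it would be to verify that the separable purity condition (requiring distinctness of total weights across different cohomological degrees) genuinely transfers; but since the set of weights of $X/F$ in each degree is a subset of the corresponding set for $X$, distinctness is automatic.
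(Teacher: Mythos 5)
Your proposal is correct and follows essentially the same route as the paper: both deduce the dimension inequality from the isomorphism $H^{k,p,q}(X/F)\cong H^{k,p,q}(X)^{F}\subset H^{k,p,q}(X)$ of Corollary \ref{cor:quofrmeq}, and then observe that the three properties, being vanishing conditions on Hodge numbers, pass to the quotient. Your explicit check that separable purity transfers (distinctness of total weights being inherited by subsets) is a welcome bit of extra care over the paper's terser phrasing, but it is not a different argument.
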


\begin{proof}
By Corollary \ref{cor:quofrmeq}, $H^{k,p,q}(X/F)\cong H^{k,p,q}(X)^{F}\subset H^{k,p,q}(X)$,
so the first sentence follows. Since all the properties of being balanced,
etc, are relations between the coefficients of $t$ and $u,v$, they
survive to the quotient. 
\end{proof}

\subsection{Character formulae}

For a $F$-variety $X$, is useful to consider the characters of the
representations $H^{k,p,q}(X)$, when this space is viewed as a $F$-module.

For this, let $A_{g}\in\mbox{Aut}(H^{k,p,q}(X))$ be the induced automorphism
of $H^{k,p,q}(X)$ given by the action of an element $g\in F$. Given
$k$-weights $(p,q)$, denote by 
\begin{eqnarray*}
\chi_{k,p,q}:\,F & \to & \mathbb{C}\\
g & \mapsto & \mbox{tr}(A_{g})
\end{eqnarray*}
the character of $H^{k,p,q}(X)$. In general, if we denote the character
of a $F$-module $V$ by $\chi_{V}$, because of the properties of
these with respect to direct sums, we have: 
\begin{equation}
\chi_{\mu_{X}^{F}(t,u,v)}(g)=\sum_{k,p,q}\chi_{k,p,q}(g)\,t^{k}u^{p}v^{q},\label{eq:character-equivariant}
\end{equation}
where $\mu_{X}^{F}(t,u,v)$ is viewed as a $F$-module, and equivalently
as a direct sum of modules graded according to the triples $(k,p,q)$.
Let $|F|$ be the cardinality of $F$. 
\begin{thm}
\label{thm:coeff1} Let $X$ be a quasi-projective $F$-variety. Then
\begin{eqnarray*}
\mu_{X/F}\left(t,u,v\right) & = & \frac{1}{\left|F\right|}\sum_{g\in F}\sum_{k,p,q}\chi_{k,p,q}(g)\,t^{k}u^{p}v^{q}.
\end{eqnarray*}
\end{thm}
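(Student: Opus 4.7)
The plan is to combine Corollary \ref{cor:quofrmeq} with the standard character-theoretic formula for the dimension of the invariant subspace of a finite-group representation.

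First I would recall that for any finite-dimensional complex representation $V$ of a finite group $F$ with character $\chi_V$, the dimension of the invariant subspace equals the multiplicity of the trivial representation, and both are given by
$$\dim V^F \;=\; \frac{1}{|F|} \sum_{g \in F} \chi_V(g).$$
This is simply the trace of the averaging projector $P = \frac{1}{|F|}\sum_{g\in F} g$, whose image is precisely $V^F$. I would apply this with $V = H^{k,p,q}(X,\mathbb{C})$, whose character is $\chi_{k,p,q}$ by definition.

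Next, using Corollary \ref{cor:quofrmeq}(1) one has $H^{k,p,q}(X/F,\mathbb{C}) \cong H^{k,p,q}(X,\mathbb{C})^F$, so
$$h^{k,p,q}(X/F) \;=\; \dim H^{k,p,q}(X,\mathbb{C})^F \;=\; \frac{1}{|F|} \sum_{g \in F} \chi_{k,p,q}(g).$$
Plugging this into the definition of $\mu_{X/F}$ (Definition \ref{def:Hodgepol}) and interchanging the finite sums yields exactly the stated formula:
$$\mu_{X/F}(t,u,v) \;=\; \sum_{k,p,q} h^{k,p,q}(X/F)\, t^k u^p v^q \;=\; \frac{1}{|F|} \sum_{g \in F} \sum_{k,p,q} \chi_{k,p,q}(g)\, t^k u^p v^q,$$
which is equivalently the statement that $\mu_{X/F}(t,u,v)$ is obtained by extracting the trivial-representation coefficient of $\mu_X^F(t,u,v)$ through the standard character pairing, recalling \eqref{eq:character-equivariant}.

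There is essentially no obstacle once the equivariant formalism is in place: the proof is a bookkeeping exercise built on top of Corollary \ref{cor:quofrmeq} and the orthogonality relations for characters. The only mild subtlety is that Corollary \ref{cor:quofrmeq} was stated under a smoothness hypothesis, whereas the theorem here does not assume $X$ smooth; in that case I would instead invoke the isomorphism \eqref{eq:equichm} together with the compatibility of the quotient pullback $\pi^*$ with mixed Hodge structures to obtain the same identity $h^{k,p,q}(X/F) = \dim H^{k,p,q}(X,\mathbb{C})^F$, and then conclude as above.
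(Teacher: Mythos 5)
Your proposal is correct and follows essentially the same route as the paper: the paper also combines Corollary \ref{cor:quofrmeq} with Schur orthogonality, merely phrasing the averaging formula as the pairing $\left\langle \chi_{\mu_{X}^{F}},\chi_{\mathbf{1}}\right\rangle$ applied to the whole equivariant polynomial at once rather than degree-by-degree to each $H^{k,p,q}(X)$. Your remark about the smoothness hypothesis in Corollary \ref{cor:quofrmeq} versus its absence in the theorem statement is a fair observation (the paper glosses over this too), and your fallback via \eqref{eq:equichm} is the right fix.
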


\begin{proof}
If $V$ is a $F$-module, and $V=\oplus_{i}V_{i}$ is a decomposition
of $V$ into irreducible sub-representations, then by the Schur orthogonality
relations, the coefficient of the trivial one-dimensional representation
$\mathbf{1}$ is given by: 
\[
\left\langle \chi_{V},\chi_{\mathbf{1}}\right\rangle =\frac{1}{\left|F\right|}\sum_{g\in F}\chi_{V}\left(g\right)\overline{\chi_{\mathbf{1}}\left(g\right)}=\frac{1}{\left|F\right|}\sum_{g\in F}\chi_{V}\left(g\right).
\]
Applying this to $V=\mu_{X}^{F}(t,u,v)$ gives, in view of Corollary
\ref{cor:quofrmeq}: 
\[
\mu_{X/F}\left(t,u,v\right)=\left\langle \chi_{\mu_{X}^{F}(t,u,v)},\,\chi_{\mathbf{1}}\right\rangle =\frac{1}{\left|F\right|}\sum_{g\in F}\chi_{\mu_{X}^{F}(t,u,v)}\left(g\right),
\]
and the wanted formula follows from Equation \eqref{eq:character-equivariant}.
\end{proof}
\begin{example}
Let $X=\mathbb{P}_{\mathbb{C}}^{1}\times\mathbb{P}_{\mathbb{C}}^{1}$
and consider the natural permutation action of $S_{2}\cong\mathbb{Z}_{2}$.
If $\mathbf{S}$ denotes the one dimensional sign representation,
by describing the induced action on cohomology, it is not difficult
to show that $\left[H^{0,0,0}(X)\right]\cong\left[H^{4,2,2}(X)\right]\cong\mathbf{1}$
(the trivial one-dimensional representation) and that $\left[H^{2,1,1}(X)\right]\cong\mathbf{1}\oplus\mathbf{S}$,
giving: 
\begin{eqnarray}
\mu_{X}^{S_{2}}\left(t,u,v\right) & = & \left[H^{0,0,0}(X)\right]\oplus\left[H^{2,1,1}(X)\right]t^{2}uv\,\oplus\left[H^{4,2,2}(X)\right]t^{4}u^{2}v^{2}\nonumber \\
 & = & \mathbf{1}\,\oplus\,\left(\mathbf{1}\oplus\mathbf{S}\right)t^{2}uv\,\oplus\,\mathbf{1}\,t^{4}u^{2}v^{2}.\label{eq:P1xP1}
\end{eqnarray}
Alternatively, writing $S_{2}=\{\pm1\}$, and taking the trivial characters
$\chi_{0,0,0}(g)=\chi_{4,2,2}(g)\equiv1$, for $g\in S_{2}$, and
$\chi_{2,1,1}(1)=2$, $\chi_{2,1,1}(-1)=0$, we can use Theorem \ref{thm:coeff1}
to get: 
\[
\mu_{X/S_{2}}\left(t,u,v\right)=\frac{1}{2}(1+2t^{2}uv+t^{4}u^{2}v^{2})+\frac{1}{2}(1+t^{4}u^{2}v^{2})=1+t^{2}uv+t^{4}u^{2}v^{2},
\]
which coincides with the coefficient of $\mathbf{1}$ in Equation
\eqref{eq:P1xP1}. Naturally, this is the expected polynomial, because
$X/S_{2}=\sym^{2}\left(\mathbb{P}_{\mathbb{C}}^{1}\right)\simeq\mathbb{P}_{\mathbb{C}}^{2}$. 
\end{example}

An interesting application of Theorem \ref{thm:coeff1}, is when the
cohomology of $X$ is an exterior algebra. To be precise, we say that
$H^{*}(X,\mathbb{C})$ is an \emph{exterior algebra of odd degree
$k_{0}$} if: 
\[
H^{k_{0}l}(X,\mathbb{C})\cong{\textstyle \bigwedge^{\!l}}H^{k_{0}}(X,\mathbb{C}),\quad\forall l\geq0,
\]
and all other cohomology groups are zero. 
\begin{cor}
\label{cor:quotmhs}Let $X$ be an elementary $F$-variety whose cohomology
is an exterior algebra of odd degree $k_{0}$, and let $H^{k_{0}}(X,\mathbb{C})=H^{k_{0},p_{0},p_{0}}(X)$
be its (trivial) mixed Hodge decomposition, for some $p_{0}\leq k_{0}$.
Then, for $r>0$ and the diagonal action of $F$ on $X^{r}$: 
\begin{eqnarray*}
\mu_{X^{r}/F}\left(t,x\right) & = & \frac{1}{\left|F\right|}\sum_{g\in F}\det\left(I+t^{k_{0}}x^{p_{0}}\,A_{g}\right)^{r},
\end{eqnarray*}
with $x=uv$, where $A_{g}$ is the automorphism of $H^{k}\left(X,\mathbb{C}\right)$
corresponding to $g\in F$, and $I$ is the identity automorphism.
In particular, if $X$ is round: 
\begin{eqnarray*}
\mu_{X^{r}/F}\left(t,x\right) & = & \frac{1}{\left|F\right|}\sum_{g\in F}\det\left(I+(tx)^{k_{0}}\,A_{g}\right)^{r}.
\end{eqnarray*}
\end{cor}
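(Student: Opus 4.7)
The plan is to combine Theorem \ref{thm:coeff1}, the equivariant K\"unneth formula in Proposition \ref{prop:equMhstructures}(2), and the classical linear-algebra identity
\[
\sum_{l\geq 0}\operatorname{tr}({\textstyle\bigwedge^l}A)\,z^{l}=\det(I+zA)
\]
(valid for any endomorphism $A$ of a finite-dimensional space, via simultaneous diagonalization and the generating function $\sum_l e_l(\lambda_1,\ldots,\lambda_n)\,z^l=\prod_i(1+\lambda_i z)$). First, Theorem \ref{thm:coeff1} reduces the problem to computing $\chi_{\mu_{X^r}^F(t,u,v)}(g)$ for each $g\in F$. Because $F$ acts diagonally on $X^r$, the equivariant K\"unneth isomorphism factors this character multiplicatively:
\[
\chi_{\mu_{X^r}^F(t,u,v)}(g)=\bigl(\chi_{\mu_X^F(t,u,v)}(g)\bigr)^{r},
\]
so everything reduces to identifying the equivariant mixed Hodge polynomial of a single factor.

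Second, I would pin down $\mu_X^F$ explicitly. Since $X$ is elementary with $H^{k_0}(X,\mathbb{C})=H^{k_0,p_0,p_0}(X)$, the cup product, which respects MHS and is $F$-equivariant, induces natural $F$-equivariant maps $\bigwedge^{l} H^{k_0,p_0,p_0}(X)\to H^{k_0 l,\,lp_0,\,lp_0}(X)$ by Proposition \ref{prop:Mhstructures}(4). The exterior-algebra hypothesis says these are vector-space isomorphisms, and elementariness then forces them to be isomorphisms of MHS, because each $H^{k_0 l}(X,\mathbb{C})$ is permitted only a single Hodge type $(p,p)$ and the cup product lands in type $(lp_0,lp_0)$. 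Hence, in $R(F)[t,u,v]$,
\[
\mu_X^F(t,u,v)=\sum_{l\geq 0}\bigl[{\textstyle\bigwedge^l}H^{k_0}(X,\mathbb{C})\bigr]\,t^{k_0 l}u^{lp_0}v^{lp_0}.
\]

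Third, writing $A_g\in\operatorname{Aut}(H^{k_0}(X,\mathbb{C}))$ for the induced automorphism, the identity $\chi_{\bigwedge^l V}(g)=\operatorname{tr}(\bigwedge^l A_g)$ together with the determinant expansion recalled above gives
\[
\chi_{\mu_X^F(t,u,v)}(g)=\det\!\left(I+t^{k_0}u^{p_0}v^{p_0}A_g\right).
\]
Substituting $x=uv$, raising to the $r$-th power, and averaging over $F$ via Theorem \ref{thm:coeff1} produces the first asserted formula. For $X$ round one has $p_0=k_0$, so $t^{k_0}x^{p_0}=(tx)^{k_0}$, which yields the second formula immediately.

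The main obstacle is the second step: upgrading the bare vector-space identification $H^{k_0 l}\cong\bigwedge^{l}H^{k_0}$ to an $F$-equivariant isomorphism of mixed Hodge structures with the correct Hodge bidegree $(lp_0,lp_0)$. Once one notes that the cup-product compatibility of Proposition \ref{prop:Mhstructures}(4) places the wedge power in that Hodge type and that elementariness leaves no room for any other type at degree $k_0 l$, the identification is forced; after that, the character computation and the averaging of Theorem \ref{thm:coeff1} are mechanical.
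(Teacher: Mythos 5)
Your proposal is correct and follows essentially the same route as the paper's proof: both reduce to the $r=1$ case via multiplicativity of the equivariant polynomial under the diagonal action, identify $H^{lk_0}(X,\mathbb{C})\cong\bigwedge^{l}H^{k_0,p_0,p_0}(X)=H^{lk_0,lp_0,lp_0}(X)$ using that $X$ is elementary, and then apply Theorem \ref{thm:coeff1} together with the identity $\sum_{l}\chi_{\bigwedge^{l}V}(g)\,s^{l}=\det(I+sV_g)$ with $s=t^{k_0}x^{p_0}$. Your second step is in fact slightly more explicit than the paper's (which simply asserts that exterior products preserve MHS) in pinning down, via cup-product compatibility and elementariness, why the wedge powers carry the Hodge type $(lp_0,lp_0)$.
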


\begin{proof}
First, let $r=1$. Since $X$ is elementary and tensor and exterior
products preserve mixed Hodge structures, we get for all $l\geq0$,
\[
H^{lk_{0}}(X,\mathbb{C})={\textstyle \bigwedge^{\!l}}H^{k_{0}}(X,\mathbb{C})={\textstyle \bigwedge^{\!l}}H^{k_{0},p_{0},p_{0}}(X)=H^{lk_{0},lp_{0},lp_{0}}(X).
\]
Applying Theorem \ref{thm:coeff1} to this case, using $x=uv$, we
get 
\begin{eqnarray}
\mu_{X/F}\left(t,x\right) & = & \frac{1}{\left|F\right|}{\textstyle \sum_{g\in F}\sum_{l\geq0}}\ \chi_{lk_{0},lp_{0},lp_{0}}(g)\,t^{lk_{0}}x^{lp_{0}}.\label{eq:mu-de-X/F}
\end{eqnarray}
Now, for a general $F$-module $V$, with $g\in F$ acting as $V_{g}\in Aut(V)$,
we have: 
\[
\sum_{l\geq0}\chi_{\bigwedge^{l}V}(g)\,s^{l}=\det(I+s\,V_{g}).
\]
This can be seen by expanding the characteristic polynomial of $V_{g}$
in terms of traces of $\bigwedge^{l}V_{g}$ (see for example \cite[pg. 69]{Se}).
Substituting the last equality into \eqref{eq:mu-de-X/F}, we get
the result with $s=t^{k_{0}}x^{p_{0}}$. Now, for a general $r\geq1$,
it follows from Proposition \ref{prop:equMhstructures}(2) that for
the diagonal action $\mu_{X^{r}}^{F}=\left(\mu_{X}^{F}\right)^{r}$,
so 
\begin{eqnarray*}
\mu_{X^{r}/F}\left(t,x\right) & = & \frac{1}{\left|F\right|}{\textstyle \sum_{g\in F}}\ \chi_{\left(\mu_{X}^{F}\left(t,x\right)\right)^{r}}\left(g\right)\\
 & = & \frac{1}{\left|F\right|}{\textstyle \sum_{g\in F}}\left(\chi_{\mu_{X}^{F}\left(t,x\right)}\left(g\right)\right)^{r}\\
 & = & \frac{1}{\left|F\right|}{\textstyle \sum_{g\in F}}\det\left(I+t^{k_{0}}x^{p_{0}}A_{g}\right)^{r}.
\end{eqnarray*}
Finally, the round case follows by setting $p_{0}=k_{0}$. 
\end{proof}

\section{Abelian character varieties and their Hodge-Deligne polynomials}

In this section, we apply the previous formulae to the computation
of the Hodge-Deligne, Poincaré and $E$-polynomials, of the distinguished
irreducible component of some families of character varieties. The
important case of $GL(n,\mathbb{C})$-character varieties leads to
the action of the symmetric group on a torus, and is naturally related
to works of I. G. Macdonald \cite{Ma} and of J. Cheah \cite{Ch}
on symmetric products.

\subsection{Mixed Hodge polynomials of abelian character varieties}

As in Section 2.1, let $G$ be a connected complex affine reductive
group. For simplicity, the $G$-character variety of $\Gamma=\mathbb{Z}^{r}$,
a rank $r$ free abelian group, will be denoted by
\[
\mathcal{M}_{r}G:=\mathcal{M}_{\mathbb{Z}^{r}}G=\hom(\mathbb{Z}^{r},G)\quot G.
\]
In general, the varieties $\mathcal{M}_{r}G$ (as well as $\hom(\mathbb{Z}^{r},G)$)
are not irreducible. But there is a unique irreducible subvariety
containing the identity representation, that we call the distinguished
component and denote by $\mathcal{M}_{r}^{0}G$, which is constructed
as the image under the composition
\begin{equation}
T^{r}=\hom(\mathbb{Z}^{r},T)\stackrel{\iota}{\hookrightarrow}\hom(\mathbb{Z}^{r},G)\stackrel{\pi}{\to}\mathcal{M}_{r}G,\label{eq:pi}
\end{equation}
where $\pi$ is the GIT projection, and $T$ is a fixed maximal torus
of $G$. This image, 
\[
\mathcal{M}_{r}^{0}G:=(\pi\circ\iota)(\hom(\mathbb{Z}^{r},T)),
\]
is then a closed subvariety of $\mathcal{M}_{r}G$ (see \cite{Th})
that we call the \emph{distinguished component}. Let $W$ be the Weyl
group of $G$, acting by conjugation on $T$. We quote the following
result from \cite{Si2}. As in the introduction, denote by $\mathcal{M}_{r}^{\star}G$
the normalization\textbf{ }of $\mathcal{M}_{r}^{0}G$\textbf{ }as
an algebraic variety, so that there is a birational map $\nu:\mathcal{M}_{r}^{\star}G\to\mathcal{M}_{r}^{0}G$.
\begin{prop}
\label{Prop:irred-comp} \cite[Theorem 2.1]{Si2} Let $G$ be a complex
reductive group and $r\geq1$. Then, $\mathcal{M}_{r}^{0}G$ is an
irreducible component of $\mathcal{M}_{r}G$, and there is an isomorphism
$\mathcal{M}_{r}^{\star}G\cong T^{r}/W$. 
\end{prop}

We now prove Theorem \ref{thm:mHDP}.
\begin{thm}
\label{thm:mHs-General}Let $G$ be a complex reductive algebraic
group. Then, $\mathcal{M}_{r}^{\star}G$ is round and 
\begin{equation}
\mu_{\mathcal{M}_{r}^{\star}G}\left(t,u,v\right)=\frac{1}{|W|}\sum_{g\in W}\left[\det\left(I+tuv\,A_{g}\right)\right]^{r},\label{eq:mu-general}
\end{equation}
where $A_{g}$ is the automorphism of $H^{1}(T,\mathbb{C})$ given
by $g\in W$, and $I$ is the identity.
\end{thm}

\begin{proof}
Since cartesian products of round varieties are round, and the maximal
torus of $G$ is isomorphic to $(\mathbb{C}^{*})^{n}$ for some $n$,
$T$ is a round variety and has an algebraic action of $W$. Then
$W$ also acts diagonally on $T^{r}=\left(\mathbb{C}^{*}\right)^{nr}$,
so $T^{r}/W$ is also round by Corollary \ref{cor:round-quot}. 
Moreover, the cohomology of $T$ is an exterior algebra of degree
$k_{0}=1$, so Corollary \ref{cor:quotmhs} immediately gives the
desired formula for $T^{r}/W$. The Theorem follows from the isomorphism
$\mathcal{M}_{r}^{\star}G\cong T^{r}/W$ of Proposition \ref{Prop:irred-comp}.
\end{proof}
\begin{rem}
By Remark \ref{rem:duality}(1), we obtain, in the compactly supported
case:
\begin{equation}
\mu_{\mathcal{M}_{r}^{\star}G}^{c}\left(t,u,v\right)=\frac{t^{r\cdot\dim T}}{|W|}\sum_{g\in W}\left[\det\left(tuvI+\,A_{g}\right)\right]^{r},\label{eq:dual}
\end{equation}
where $\dim T$ is the rank of $G$. We also obtain a formula for
the Poincaré and for the Serre polynomial $E_{\mathcal{M}_{r}^{0}G}^{c}$
of the distinguished component $\mathcal{M}_{r}^{0}G$.
\end{rem}

\begin{thm}
\label{thm:E-poly}For every complex reductive algebraic group $G$
and $r\geq1$, the Poincaré polynomial (respectively the Serre polynomial)
of $\mathcal{M}_{r}^{0}G$ is given by substituting $u=v=1$ in Equation
\eqref{eq:mu-general} (respectively $t=-1$ in \eqref{eq:dual}).
\end{thm}

\begin{proof}
From \cite[Cor. 4.9]{FL}, there is a strong deformation retraction
from $\mathcal{M}_{r}^{0}G$ to $\hom^{0}(\mathbb{Z}^{r},K)/K$, the
path component of the space of commuting $r$-tuples of elements in
$K$, containing the trivial $r$-tuple, up to conjugation, where
$K$ is a maximal compact subgroup of $G$. Hence, these spaces have
the same Poincaré polynomials. On the other hand, the formula of \cite[Thm. 1.4]{St}
is the same as Equation \eqref{eq:mu-general} with $u=v=1$. Indeed,
given the identification of $H^{1}(T,\mathbb{C})$ with $\mathfrak{t}\cong\mathbb{C}^{n}$,
the Lie algebra of $T$: 
\[
H^{1}(T,\mathbb{C})\cong H^{1}\left(\left(\mathbb{C}^{*}\right)^{n},\mathbb{C}\right)=H^{1}\left(\mathbb{C}^{*},\mathbb{C}\right)^{n}\cong\mathfrak{t},
\]
and the fact that every cohomology class has a left invariant representative,
the action of the Weyl group $W=S_{n}$ on $H^{1}(T,\mathbb{C})$
coincides with the one used in \cite{St}, in the context of compact
Lie groups.\footnote{We thank S. Lawton for calling our attention to the recent preprint
\cite{St}, where the Poincaré polynomial of analogous spaces for
\emph{compact} Lie groups is computed by quite different methods.}

As indicated in Proposition \ref{prop:polynomials}(4), the Serre
polynomial ($E^{c}$-polynomial) is additive for disjoint unions of
locally closed subvarieties. Therefore, for every bijective normalization
morphism between algebraic varieties $f:X\to Y$ the $E^{c}$-polynomials
of $X$ and of $Y$ coincide. In particular, the $E^{c}$-polynomials
of $\mathcal{M}_{r}^{\star}G$ and $\mathcal{M}_{r}^{0}G$ coincide. 
\end{proof}

\subsection{Normality of the distinguished component }

Given the equalities of both Poincaré and Serre polynomials of $\mathcal{M}_{r}^{0}G$
and $\mathcal{M}_{r}^{\star}G$, it is interesting to check where
there is also an equality $\mu_{\mathcal{M}_{r}^{\star}G}=\mu_{\mathcal{M}_{r}^{0}G}$.
To handle this question, we start by considering some sufficient conditions
for normality of $\mathcal{M}_{r}^{0}G$. 
\begin{lem}
\label{lem:normal-covers}Let $F\subset G$ be a finite subgroup of
the center of $G$, and $H=G/F$. If $\mathcal{M}_{r}^{0}G$ is normal
then $\mathcal{M}_{r}^{0}H$ is normal.
\end{lem}

\begin{proof}
Let $\hom^{0}(\mathbb{Z}^{r},G):=\pi^{-1}(\mathcal{M}_{r}^{0}G)$.
By definition of $\pi$ in equation \eqref{eq:pi} this is the variety
of homomorphisms that can be conjugated, in $G$, to some representation
inside the maximal torus $T\subset G$ (and similarly for $\pi_{H}:\hom^{0}(\mathbb{Z}^{r},H)\to\mathcal{M}_{r}^{0}H$).
The fibration of algebraic groups $F\to G\to H,$ induces the following
commutative diagram:
\[
\begin{array}{ccccc}
\hom(\mathbb{Z}^{r},F) & \hookrightarrow & \hom(\mathbb{Z}^{r},T) & \twoheadrightarrow & \hom(\mathbb{Z}^{r},T')\\
\bigparallel &  & \downarrow &  & \downarrow\\
\hom(\mathbb{Z}^{r},F) & \hookrightarrow & \hom^{0}(\mathbb{Z}^{r},G) & \stackrel{\phi}{\twoheadrightarrow} & \hom^{0}(\mathbb{Z}^{r},H)\\
 &  & \quad\downarrow\pi &  & \quad\downarrow\pi_{H}\\
 &  & \mathcal{M}_{r}^{0}G & \twoheadrightarrow & \mathcal{M}_{r}^{0}H,
\end{array}
\]
where $T':=T/F$ is a maximal torus of $H$, and the surjections on
the two top rows are discrete fibrations (and finite étale morphisms).
Since $F$ is central, conjugating by $G$ or by $H$ are equivalent,
so that $\mathcal{M}_{r}^{0}G\cong\hom^{0}(\mathbb{Z}^{r},G)/H$,
and the map $\phi$ is $H$-equivariant, we obtain isomorphisms:
\[
\mathcal{M}_{r}^{0}H\cong\hom^{0}(\mathbb{Z}^{r},H)/H\cong\left(\hom^{0}(\mathbb{Z}^{r},G)/F^{r}\right)/H\cong\mathcal{M}_{r}^{0}G/F^{r}
\]
because the actions of $F^{r}\cong\hom(\mathbb{Z}^{r},F)$ and conjugation
by $H$ on commute. Hence, as an algebraic quotient by a finite group,
if $\mathcal{M}_{r}^{0}G$ is normal, then so is $\mathcal{M}_{r}^{0}F$.%
\end{proof}
\begin{rem}
If the action of $F^{r}\cong\hom(\mathbb{Z}^{r},F)$ on $\mathcal{M}_{r}^{0}G$
was free, the quotient $\mathcal{M}_{r}^{0}G\to\mathcal{M}_{r}^{0}H$
would also be étale, and the converse of Lemma \eqref{lem:normal-covers}
would be valid (see e.g. \cite[Theorem 4.4 (i)]{Dr}). However, this
is not the case for the isogeny $Spin(n,\mathbb{C})\to SO(n,\mathbb{C})$
($Spin(n,\mathbb{C})$ is the complexification of the universal cover
$Spin(n)\to SO(n)$, of the compact group $SO(n)$). The normality
of $\mathcal{M}_{r}^{0}Spin(n,\mathbb{C})$ and of $\mathcal{M}_{r}^{0}G$
for exceptional groups $G$ is known to be a difficult problem (see
Sikora \cite[Problem 2.3]{Si2}).
\end{rem}

For a complex reductive group $G$, let $DG=[G,G]$ denote its derived
group, which is a semisimple group. Let us call \emph{classical semisimple
group} to a group $G$ that is a direct product of groups of the three
classical families: $SL(n,\mathbb{C})$, $Sp(n,\mathbb{C})$ and $SO(n,\mathbb{C})$,
$n\in\mathbb{N}$. 
\begin{lem}
\label{lem:dg-classical}If $DG$ is a classical semisimple group,
then $\mathcal{M}_{r}^{0}G$ is a normal variety.
\end{lem}

\begin{proof}
Sikora proved that, when $G=SL(n,\mathbb{C})$, $Sp(n,\mathbb{C})$
or $SO(n,\mathbb{C})$, there are algebraic isomorphisms $\mathcal{M}_{r}^{0}G\cong T^{r}/W$
\cite{Si2}. So for $G$ in these 3 families, $\mathcal{M}_{r}^{0}G$
is normal. It is clear that if $G=G_{1}\times G_{2}$ then the maximal
torus is also a product, and that $\mathcal{M}_{r}^{0}G=\mathcal{M}_{r}^{0}G_{1}\times\mathcal{M}_{r}^{0}G_{2}$.
Thus, $\mathcal{M}_{r}^{0}G$ is normal for any classical semisimple
group $G$.

Finally, the result follows from Lemma \ref{lem:normal-covers}, by
taking finite quotients. Indeed, by the central isogeny theorem, any
reductive group $G$ is a finite central quotient of a product of
its derived group $DG$ with a torus $T$ (and clearly $\mathcal{M}_{r}T=\mathcal{M}_{r}^{0}T\cong T^{r}$).
\end{proof}
Since the hypothesis of Lemma \ref{lem:dg-classical} implies that
$\mathcal{M}^{\star}G=\mathcal{M}_{r}^{0}G$, we have proved the following.
\begin{thm}
\label{thm:main2}Let $r\geq1$, and let $G$ be a reductive group
whose derived group is a classical group. Then, the mixed Hodge polynomial
of $\mathcal{M}_{r}^{0}G$ is given by formula \eqref{eq:mu-general}.
\end{thm}

This motivates the following conjecture.
\begin{conjecture}
For every $r\geq1$ and complex reductive $G$, formula \eqref{eq:mu-general}
holds for $\mathcal{M}_{r}^{0}G$.
\end{conjecture}

\subsection{The $GL(n,\mathbb{C})$ and $SL(n,\mathbb{C})$ cases}

The case of $G=GL(n,\mathbb{C})$ is instructive, where the Weyl group
is just the symmetric group, denoted by $S_{n}$. If $X$ is a variety,
we denote its $n$-fold symmetric product by $X^{(n)}$ or by $\sym^{n}(X)=X^{n}/S_{n}$.
As a set, $\sym^{n}(X)$ is the set of unordered $n$-tuples of (not
necessarily distinct) elements of $X$. 
\begin{prop}
\label{prop:GLn-char-var}Let $G=GL(n,\mathbb{C})$, and let $T\cong(\mathbb{C}^{*})^{n}$
denote a maximal torus of $G$. Then $\mathcal{M}_{r}G=\mathcal{M}_{r}^{0}G=\mathcal{M}_{r}^{\star}G$
and we have isomorphisms of affine algebraic varieties 
\[
\mathcal{M}_{r}G\,\cong T^{r}/S_{n}\cong\sym^{n}(\mathbb{C}^{*})^{r}.
\]
\end{prop}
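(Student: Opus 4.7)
The plan is to first establish the set-theoretic bijection $\mathcal{M}_r G \cong T^r/S_n$ via the standard correspondence between closed conjugation orbits of commuting $r$-tuples of matrices and their simultaneous diagonalizations, then upgrade this to an isomorphism of affine varieties, and finally rearrange coordinates to get the symmetric product description.

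I would begin by recalling that the GIT quotient $\hom(\mathbb{Z}^r,G)\quot G$ parametrizes closed $G$-orbits of commuting tuples $(A_1,\dots,A_r)\in G^r$ under simultaneous conjugation, and that, for $G$ reductive, an orbit is closed iff the corresponding representation is semisimple. For $G=GL(n,\mathbb{C})$, semisimplicity of $(A_1,\dots,A_r)$ amounts to each $A_i$ being diagonalizable. Since the $A_i$ commute pairwise, the classical fact that a commuting family of diagonalizable matrices is simultaneously diagonalizable guarantees that every closed orbit meets the diagonal torus $T^r$. Consequently the inclusion $T^r\hookrightarrow\hom(\mathbb{Z}^r,G)$ descends to a surjective morphism $T^r\to\mathcal{M}_r G$ whose fibers coincide with the orbits of the Weyl group $W=N_G(T)/T\cong S_n$ acting diagonally by simultaneous permutation of coordinates. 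This yields a set-theoretic bijection $T^r/S_n\to\mathcal{M}_r G$ with a natural algebraic morphism going one way.

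Next, to promote this to an isomorphism of affine algebraic varieties, I would invoke the restriction theorem for invariant functions of reductive group actions on their own powers (in the GIT setting this is the content of the abelian case of Sikora's theorem referenced earlier in the paper, and is ultimately a variant of the Chevalley restriction theorem): the restriction homomorphism $\mathbb{C}[G^r]^G\to\mathbb{C}[T^r]^W$ is an isomorphism for $G=GL(n,\mathbb{C})$. Since $\mathbb{C}[\hom(\mathbb{Z}^r,G)]^G$ coincides with $\mathbb{C}[G^r]^G$ after passing to the (scheme-theoretic) commuting variety, and the commuting condition is automatic on $T^r$, taking $\mathsf{Specmax}$ gives the desired isomorphism of affine varieties $\mathcal{M}_r G\cong T^r/S_n$.

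For the second isomorphism, I would use the canonical identification $T^r=((\mathbb{C}^*)^n)^r$ and reorder factors via the obvious shuffle $((\mathbb{C}^*)^n)^r\cong((\mathbb{C}^*)^r)^n$, writing each diagonal tuple $(D_1,\dots,D_r)$ as an unordered (once we quotient) list of $n$ points $(z_1^{(i)},\dots,z_r^{(i)})\in(\mathbb{C}^*)^r$, $i=1,\dots,n$, collecting the $i$-th diagonal entries across the $r$ matrices. Under this shuffle the $S_n$-action on $T^r$, permuting the $n$ diagonal slots simultaneously in all factors, corresponds precisely to the permutation action of $S_n$ on the $n$ copies of $(\mathbb{C}^*)^r$, so $T^r/S_n\cong\mathsf{Sym}^n((\mathbb{C}^*)^r)$.

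The main obstacle is the middle step: the set-theoretic bijection is essentially linear algebra, and the coordinate rearrangement is elementary, but checking that the induced map is an isomorphism of affine schemes (and not just a bijection of points) requires the restriction theorem for rings of invariants. I would handle this by citing Sikora's result directly, noting that for $GL(n,\mathbb{C})$ the commuting variety of $r$-tuples is already irreducible and the normalization agrees with the whole variety, so the ``identity-component'' caveat in the statement $\mathcal{M}_r^0 G\cong T^r/W$ becomes vacuous.
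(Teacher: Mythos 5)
Your argument is correct and follows essentially the same route as the paper: reduce every point of the GIT quotient to an $r$-tuple of diagonal matrices (you via closed orbits and semisimplicity, the paper via degenerating a Jordan form to its diagonal within the orbit closure --- two equivalent phrasings of the same reduction), identify the residual identification as the simultaneous $S_n$-permutation action on diagonal entries, and shuffle coordinates to exhibit $\sym^{n}(\mathbb{C}^{*})^{r}$. The paper likewise only sketches the upgrade from a bijection of points to an isomorphism of affine varieties, deferring the details to \cite{FL2}, so your explicit appeal to the restriction theorem on invariants restricted to the commuting variety (Sikora's result) is, if anything, slightly more careful on that step.
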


\begin{proof}
In \cite[Cor 5.14]{FL}, it was shown that $\mathcal{M}_{r}G=\hom(\mathbb{Z}^{r},GL(n,\mathbb{C}))\quot GL(n,\mathbb{C})$
is an irreducible variety (It is also path connected, given the strong
deformation retraction from $\mathcal{M}_{r}G$ to the path connected
compact space $\hom(\mathbb{Z}^{r},U(n))/U(n)$, see \cite{KS,FL}).
Since $\mathcal{M}_{r}^{0}G$ is irreducible of the same dimension
(by \cite[Thm 2.1(1)]{Si2}), $\mathcal{M}_{r}G=\mathcal{M}_{r}^{0}G$.
Moreover, $\mathcal{M}_{r}^{0}G$ is normal, hence isomorphic to $\mathcal{M}_{r}^{\star}G$,
and so $\mathcal{M}_{r}G\cong\mathcal{M}_{r}^{\star}G\cong T^{r}/W$,
by Sikora's results in \cite[Thm 2.1(2)-(3)]{Si2}). Since $W\cong S_{n}$
acts diagonally, we have finally $T^{r}/W=((\mathbb{C}^{*})^{n})^{r}/S_{n}\cong((\mathbb{C}^{*})^{r})^{n}/S_{n}=\sym^{n}(\mathbb{C}^{*})^{r}$.
\end{proof}
We now turn to the proof of Theorem \ref{thm:E-SLn}, on the $SL(n,\mathbb{C})$-character
variety of $\Gamma=\mathbb{Z}^{r}$, and start with the case $G=GL(n,\mathbb{C})$.
Let $M_{\sigma}$ denote a $n\times n$ permutation matrix (in some
basis) corresponding to $\sigma\in S_{n}$ and let $I_{n}$ be the
$n\times n$ identity matrix. 

\begin{prop}
\label{prop:mHs-GLn-det}Let $G=GL(n,\mathbb{C})$. Then, $\mathcal{M}_{r}G$
is round and its mixed Hodge polynomial is given by 
\begin{eqnarray*}
\mu_{\mathcal{M}_{r}G}\left(t,u,v\right) & = & \frac{1}{n!}\sum_{\sigma\in S_{n}}\left[\det\left(I_{n}+tuv\,M_{\sigma}\right)\right]^{r}.
\end{eqnarray*}
\end{prop}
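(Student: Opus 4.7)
The plan is to invoke the general formula of Proposition~\ref{prop:mHs-General} (or equivalently Corollary~\ref{cor:quotmhs} directly), specialized to $G=GL(n,\mathbb{C})$, where $\mathcal{M}_r G$ coincides with its identity component. Concretely, by Proposition~\ref{prop:GLn-char-var} we have
\[
\mathcal{M}_r G \ \cong\ T^r/S_n,\qquad T=(\mathbb{C}^*)^n,
\]
with $S_n$ acting diagonally by simultaneous permutation of the $n$ coordinate factors of $T$. So the Weyl group is $W=S_n$, and the proposition reduces to computing the induced representation on $H^1(T,\mathbb{C})$ and applying Corollary~\ref{cor:quotmhs}.

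First, I would verify that the hypotheses of Corollary~\ref{cor:quotmhs} are satisfied by $X=T$ with $F=S_n$. Example~\ref{exa:C*} shows $\mathbb{C}^*$ is round with $H^1(\mathbb{C}^*,\mathbb{C})=H^{1,1,1}(\mathbb{C}^*)$ one-dimensional, and by the K\"unneth formula (respecting MHS) together with Proposition~\ref{prop:round-necessary}(2), $T=(\mathbb{C}^*)^n$ is round. Moreover, the K\"unneth decomposition identifies $H^*(T,\mathbb{C})\cong \bigwedge^{\!*} H^1(T,\mathbb{C})$, so the cohomology of $T$ is an exterior algebra of odd degree $k_0=1$ with $H^1(T,\mathbb{C})=H^{1,1,1}(T)$, giving $p_0=k_0=1$.

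Next, I would identify the $S_n$-action on $H^1(T,\mathbb{C})\cong\mathbb{C}^n$. Since $\sigma\in S_n$ acts on $T$ by permuting the coordinate factors, the induced map on $H^1(T,\mathbb{C})$ is the corresponding coordinate permutation in the K\"unneth basis dual to the factors, i.e.\ exactly the permutation matrix $M_\sigma$ in the standard basis. Thus $A_\sigma=M_\sigma$ for every $\sigma\in S_n$.

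Finally, I would assemble everything: roundness of $\mathcal{M}_r G=T^r/S_n$ follows from Corollary~\ref{cor:round-quot} applied to the round variety $T^r$ and the finite group $S_n$; the formula follows by plugging $F=S_n$, $X=T$, $k_0=p_0=1$, $A_g=M_\sigma$ into Corollary~\ref{cor:quotmhs}, which yields
\[
\mu_{\mathcal{M}_r G}(t,u,v)=\mu_{T^r/S_n}(t,x)=\frac{1}{n!}\sum_{\sigma\in S_n}\det(I_n+tuv\, M_\sigma)^r,
\]
with $x=uv$. The only real point requiring care is making sure that the whole affine GIT quotient (and not only the identity component) is $T^r/S_n$; this is precisely the content of Proposition~\ref{prop:GLn-char-var}, so no extra work is needed. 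There is no substantive obstacle — the proof is essentially a direct specialization of machinery already developed.
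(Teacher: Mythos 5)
Your proposal is correct and follows essentially the same route as the paper: identify $\mathcal{M}_r G\cong T^r/S_n$ via Proposition~\ref{prop:GLn-char-var} (noting irreducibility so the whole variety is the identity component), observe that the induced $S_n$-action on $H^1(T,\mathbb{C})\cong\mathbb{C}^n$ is by the permutation matrices $M_\sigma$, and apply the general machinery (the paper cites Proposition~\ref{prop:mHs-General}, which is itself just Corollary~\ref{cor:quotmhs} specialized to $T^r/W$, so your direct appeal to that corollary is the same argument unwound by one step). The extra care you take in verifying the exterior-algebra and roundness hypotheses for $T=(\mathbb{C}^*)^n$ is exactly what the paper delegates to the proof of Proposition~\ref{prop:mHs-General}.
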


\begin{proof}
This formula is a direct application of Proposition \ref{thm:mHs-General},
since the maximal torus is $T\cong(\mathbb{C}^{*})^{n}$, $W\cong S_{n}$
acts by permutation, and $|S_{n}|=n!$. So, the automorphism $A_{\sigma}$
on $H^{1}(T,\mathbb{C})\cong\mathbb{C}^{n}$ acts by permutation,
given by the matrix $M_{\sigma}$. 
\end{proof}
\begin{rem}
There is a strong deformation retract from $\mathcal{M}_{r}G\cong\sym^{n}(\mathbb{C}^{*})^{r}$
to $(S^{1})^{r}/S_{n}\cong\sym^{n}(S^{1})^{r}$ (see \cite{FL}) which
is the space of $n$ (unordered) points on the compact $r$-torus
$(S^{1})^{r}$. So our results relate also to the study of cohomology
of so-called configuration spaces on compact Lie groups. 
\end{rem}

We now provide an even more concrete formula, and better adapted to
computer calculations, using the relation between conjugacy classes
of permutations and partitions of a natural number $n$, to compute
the above determinants.

For this, we set up some notation. Let $n\in\mathbb{N}$ and $\mathcal{P}_{n}$
be the set of partitions of $n$. We denote by $\underline{n}$ a
general partition in $\mathcal{P}_{n}$ and write it as 
\[
\underline{n}=[1^{a_{1}}2^{a_{2}}\cdots n^{a_{n}}]
\]
where $a_{j}\geq0$ denotes the number of parts of $\underline{n}$
of size $j=1,\cdots,n$; then, of course $n=\sum_{j=1}^{n}j\,a_{j}$. 
\begin{thm}
\label{thm:mu-GLn}Let $G=GL(n,\mathbb{C})$, $x=uv$ and $\underline{n}=[1^{a_{1}}2^{a_{2}}\cdots n^{a_{n}}]\in\mathcal{P}_{n}$.
The mixed Hodge polynomial of $\mathcal{M}_{r}G$ is given by
\begin{eqnarray*}
\mu_{\mathcal{M}_{r}G}\left(t,x\right) & = & \sum_{\underline{n}\in\mathcal{P}_{n}}\prod_{j=1}^{n}\frac{\left(1-\left(-tx\right)^{j}\right)^{a_{j}r}}{a_{j}!\,j^{a_{j}}}.
\end{eqnarray*}
\end{thm}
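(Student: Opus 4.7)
My plan is to start from the formula of Proposition \ref{prop:mHs-GLn-det},
\[
\mu_{\mathcal{M}_{r}G}(t,u,v) \;=\; \frac{1}{n!}\sum_{\sigma\in S_{n}}\bigl[\det(I_{n}+tuv\,M_{\sigma})\bigr]^{r},
\]
and group the sum over $\sigma \in S_n$ by conjugacy class. Since $M_{\tau\sigma\tau^{-1}} = M_{\tau} M_\sigma M_{\tau}^{-1}$, the determinant $\det(I_n + s M_\sigma)$ is a class function of $\sigma$, and conjugacy classes in $S_n$ are in bijection with cycle types, i.e.\ with partitions $\underline{n}=[1^{a_1}\cdots n^{a_n}] \in \mathcal{P}_n$. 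The size of the conjugacy class of a permutation of cycle type $\underline{n}$ is the standard combinatorial quantity $n!/\prod_j a_j! j^{a_j}$, which will cancel the $1/n!$ factor when summing over classes.

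The main computation is then to evaluate $\det(I_n + s\, M_\sigma)$ for one representative of each cycle type, with $s=tuv$. Picking a basis in which $M_\sigma$ is block-diagonal with one $j\times j$ cyclic permutation block for each cycle of length $j$, the determinant factorizes as
\[
\det(I_n + s\, M_\sigma) \;=\; \prod_{j=1}^{n} \det(I_j + s\, C_j)^{a_j},
\]
where $C_j$ is the standard $j$-cycle matrix. The eigenvalues of $C_j$ are the $j$-th roots of unity $\{\omega_0,\ldots,\omega_{j-1}\}$, so
\[
\det(I_j + s\, C_j) \;=\; \prod_{k=0}^{j-1}(1+s\,\omega_k) \;=\; \prod_{k=0}^{j-1}\bigl(1-(-s)\omega_k\bigr) \;=\; 1-(-s)^{j},
\]
the last step using the factorization $x^j-1 = \prod_k(x-\omega_k)$ evaluated at $x=1/(-s)$ and rescaling. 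Substituting $s=tuv=tx$, this gives $\det(I_n + tx\, M_\sigma) = \prod_{j=1}^n \bigl(1-(-tx)^j\bigr)^{a_j}$.

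Putting the two ingredients together,
\[
\mu_{\mathcal{M}_r G}(t,x) \;=\; \frac{1}{n!}\sum_{\underline{n}\in\mathcal{P}_n} \frac{n!}{\prod_j a_j!\, j^{a_j}}\, \prod_{j=1}^n\bigl(1-(-tx)^j\bigr)^{a_j r} \;=\; \sum_{\underline{n}\in\mathcal{P}_n}\prod_{j=1}^n \frac{\bigl(1-(-tx)^j\bigr)^{a_j r}}{a_j!\, j^{a_j}},
\]
which is the asserted formula. I do not foresee a genuine obstacle: the only nontrivial steps are the eigenvalue calculation for $C_j$ and recalling the standard cycle-type counting formula, both entirely routine.
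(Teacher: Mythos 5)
Your proposal is correct and follows essentially the same route as the paper: starting from Proposition \ref{prop:mHs-GLn-det}, grouping the sum over $S_n$ by cycle type, evaluating $\det(I_n+s\,M_\sigma)$ blockwise to get $\prod_j(1-(-s)^j)^{a_j}$, and invoking the standard conjugacy-class size formula $c_{\underline{n}}/n!=\prod_j 1/(a_j!\,j^{a_j})$. The only cosmetic difference is that you diagonalize each cyclic block via roots of unity, whereas the paper computes the determinant of a full cycle directly in the standard basis; both yield the same identity.
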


\begin{proof}
To compute the determinant in Proposition \ref{prop:mHs-GLn-det},
recall that any permutation $\sigma\in S_{n}$ can be written as a
product of disjoint cycles (including cycles of length 1), whose lengths
provide a partition of $n$, say $\underline{n}(\sigma)=[1^{a_{1}}2^{a_{2}}\cdots n^{a_{n}}]$.
Moreover, any two permutations are conjugated if and only if they
give rise to the same partition, so the conjugation class of $\sigma$
uniquely determines the non-negative integers $a_{1},\cdots,a_{n}$.
If $\sigma$ is a full cycle $\sigma=(1\cdots n)\in S_{n}$, and $M_{\sigma}$
a corresponding matrix, by computing in a standard basis, we easily
obtain the conjugation invariant expression $\det(I_{n}-\lambda M_{\sigma})=1-\lambda^{n}$.
So, for a general permutation $\sigma\in S_{n}$ with cycles given
by the partition $\underline{n}(\sigma)$ we have 
\[
\det(I_{n}-\lambda M_{\sigma})=\prod_{j=1}^{n}(1-\lambda^{j})^{a_{j}}.
\]
Now, let $c_{\underline{n}(\sigma)}$ be the size of the conjugacy
class of the permutation $\sigma$, as a subset of $S_{n}$. Then
the formula of Proposition \ref{prop:mHs-GLn-det}, with $\lambda=-tuv=-tx$,
becomes: 
\begin{eqnarray*}
\mu_{\mathcal{M}_{r}G}\left(t,x\right) & = & \frac{1}{n!}\sum_{\sigma\in S_{n}}\left[\det\left(I_{n}-(-tx)\,M_{\sigma}\right)\right]^{r}\\
 & = & \frac{1}{n!}\sum_{\underline{n}\in\mathcal{P}_{n}}c_{\underline{n}}\prod_{j=1}^{n}(1-(-tx)^{j})^{a_{j}r},
\end{eqnarray*}
where we replaced the sum over permutations by the sum over partitions
$\underline{n}$ (each repeated $c_{\underline{n}}$ times). The result
then follows from the well known formula $\frac{c_{\underline{n}}}{n!}=\prod_{j=1}^{n}\frac{1}{a_{j}!\,j^{a_{j}}}$.
\end{proof}
\begin{rem}
Since $\mathcal{M}_{r}G$ is an orbifold of dimension $nr$, it satisfies
the Poincaré Duality for mixed Hodge structures (\ref{cor:quofrmeq}(1)),
and we compute: 
\begin{eqnarray}
\mu_{\mathcal{M}_{r}G}^{c}\left(t,x\right) & = & \left(-t\right)^{nr}\sum_{\underline{n}\in\mathcal{P}_{n}}\prod_{j=1}^{n}\frac{\left((-tx)^{j}-1\right)^{a_{j}r}}{a_{j}!\,j^{a_{j}}}.\label{eq:mu-GLn}
\end{eqnarray}
\end{rem}

We now obtain the mixed Hodge polynomial for $\mathcal{M}_{r}SL(n,\mathbb{C})$,
by relating it to $\mathcal{M}_{r}GL(n,\mathbb{C})$. 
\begin{thm}
\label{thm:sl_n.gl_n}The mixed Hodge polynomials of the free abelian
character varieties of $GL(n,\mathbb{C})$ and $SL(n,\mathbb{C})$
are related by $\mu_{\mathcal{M}_{r}GL(n,\mathbb{C})}\left(t,x\right)=\left(1+tx\right)^{r}\mu_{\mathcal{M}_{r}SL(n,\mathbb{C})}\left(t,x\right)$,
giving:\textbf{
\begin{eqnarray*}
\mu_{\mathcal{M}_{r}SL(n,\mathbb{C})}\left(t,x\right) & = & \sum_{\underline{n}\in\mathcal{P}_{n}}\frac{1}{\left(1+tx\right)^{r}}\prod_{j=1}^{n}\frac{\left(1-(-tx)^{j}\right)^{a_{j}r}}{a_{j}!\,j^{a_{j}}}.
\end{eqnarray*}
}
\end{thm}

\begin{proof}
In contrast to the projection $GL(n,\mathbb{C})\to PGL(n,\mathbb{C}):=GL(n,\mathbb{C})/\mathbb{C}^{*}$
there is no algebraic map $GL(n,\mathbb{C})\to SL(n,\mathbb{C})$
that commutes with the Weyl group action. So, we need to resort to
the equivariant framework. We will actually prove a stronger equality:
\begin{eqnarray*}
\mu_{T^{r}}^{S_{n}}\left(t,x\right) & = & \left(1+tx\right)^{r}\mu_{\widetilde{T}^{r}}^{S_{n}}\left(t,x\right),
\end{eqnarray*}
where $T\cong\left(\mathbb{C}^{*}\right)^{n}$ and $\widetilde{T}=\left\{ z\in T\,|\,z_{1}\cdots z_{n}=1\right\} $
are the maximal torus of $GL(n,\mathbb{C})$, and of $SL(n,\mathbb{C})$,
respectively, and the $S_{n}$ action is the natural permutaion action
on the coordinates. From Corollary \ref{cor:quofrmeq}(2), Proposition
\ref{Prop:irred-comp}, and from the irreducibility of these character
varieties, the theorem will follow. Using the multiplicativity of
the equivariant polynomials (Proposition \ref{prop:equMhstructures}(2))
it suffices to show this for $r=1$. Consider the fibration of quasi-projective
varieties 
\[
\widetilde{T}\longrightarrow T\stackrel{\pi}{\longrightarrow}\mathbb{C}^{*},
\]
where $\pi\left(z_{1},\ldots,z_{n}\right)=z_{1}\cdots z_{n}$. By
considering the trivial action on $\mathbb{C}^{*}$, this is a fibration
of $S_{n}$-varieties with trivial monodromy, since it is in fact
a $\widetilde{T}$-principal bundle (and $\widetilde{T}$ is a connected
Lie group). Then Theorem \ref{thm:multpl} gives us an equality of
the equivariant $E$-polynomials: 
\[
E^{S_{n}}(T)=E^{S_{n}}(\tilde{T})\,E(\mathbb{C}^{*})=E^{S_{n}}(\tilde{T})(1-x).
\]
Finally, the desired formula comes from the relations in Proposition
\ref{prop:round-necessary}, since all varieties in consideration
are round. 
\end{proof}
Now, we turn to the computation of some $E^{c}$-polynomials, which
relate to some formulas obtained in \cite{LM}. 
\begin{cor}
\label{cor:E-poly-GLn}For $r\in\mathbb{N}$ and $G=GL(n,\mathbb{C})$
we have 
\[
E_{\mathcal{M}_{r}G}^{c}(x)=\sum_{\underline{n}\in\mathcal{P}_{n}}\prod_{j=1}^{n}\frac{(x^{j}-1)^{a_{j}r}}{a_{j}!\,j^{a_{j}}},
\]
with $\underline{n}=[1^{a_{1}}\cdots n^{a_{n}}]\in\mathcal{P}_{n}$,
and $\chi\left(\mathcal{M}_{r}G\right)=0.$
\end{cor}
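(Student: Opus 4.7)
The plan is to obtain both assertions by direct specialization of the mixed Hodge polynomial formula from Theorem \ref{thm:mu-GLn}, using the compactly supported version already recorded in Remark \ref{rem:mu-PD}. Since $\mathcal{M}_{r}G$ is a finite quotient of the smooth variety $T^{r}\cong(\mathbb{C}^{*})^{nr}$ (Propositions \ref{prop:GLn-char-var} and \ref{Prop:irred-comp}), it satisfies Poincar\'e duality for mixed Hodge structures by Corollary \ref{cor:quofrmeq}(1). The resulting expression
\[
\mu_{\mathcal{M}_{r}G}^{c}(t,x)=(-t)^{nr}\sum_{\underline{n}\in\mathcal{P}_{n}}\prod_{j=1}^{n}\frac{\bigl((-tx)^{j}-1\bigr)^{a_{j}r}}{a_{j}!\,j^{a_{j}}}
\]
is the starting point.

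Next, I would set $t=-1$ in this formula. The prefactor $(-t)^{nr}$ becomes $1$, while each numerator simplifies as $(-tx)^{j}-1=x^{j}-1$. This gives the claimed closed form for $E_{\mathcal{M}_{r}G}^{c}(x)$. The only thing to verify here is the sign bookkeeping in the specialization $t=-1$; this is a mechanical check.

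For the Euler characteristic, recall from Proposition \ref{prop:polynomials}(3) that $\chi(X)=E_{X}(1,1)$, and Poincar\'e duality gives $E_{X}^{c}(1,1)=E_{X}(1,1)$, so it suffices to evaluate the $E^{c}$-formula at $x=1$. Each factor $(x^{j}-1)^{a_{j}r}$ becomes $0^{a_{j}r}$; since for any partition $\underline{n}\in\mathcal{P}_{n}$ with $n\geq 1$ the constraint $\sum_{j}ja_{j}=n$ forces at least one $a_{j}$ to be positive, and $r\geq 1$, every summand vanishes. Hence $\chi(\mathcal{M}_{r}G)=0$.

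There is no substantive obstacle: the corollary is a routine specialization of Theorem \ref{thm:mu-GLn} combined with the Poincar\'e duality enjoyed by finite quotients of smooth varieties (Corollary \ref{cor:quofrmeq}). The only point requiring care is the algebraic identity $(-t)^{nr}\prod_{j}((-tx)^{j}-1)^{a_{j}r}\big|_{t=-1}=\prod_{j}(x^{j}-1)^{a_{j}r}$, which one verifies by tracking parities of $j$ using $\sum_{j}ja_{j}=n$, but this is absorbed cleanly in the substitution $t=-1$.
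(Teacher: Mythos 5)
Your proposal is correct and follows essentially the same route as the paper: both obtain the $E^{c}$-formula by setting $t=-1$ in the compactly supported mixed Hodge polynomial of Remark \ref{rem:mu-PD} (itself Theorem \ref{thm:mu-GLn} plus Poincar\'e duality), and both deduce $\chi(\mathcal{M}_{r}G)=0$ from the vanishing of every factor $x^{j}-1$ at $x=1$. The only stylistic quibble is that the sign check is even simpler than you suggest — at $t=-1$ one has $-t=1$, so no parity tracking via $\sum_{j}ja_{j}=n$ is needed.
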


\begin{proof}
The formula for $E_{\mathcal{M}_{r}G}^{c}(x)$ follows directly from
Equation \eqref{eq:mu-GLn} with $t=-1$. Given the previous Theorem,
the vanishing of the Euler characteristic is clear, since all factors
$x^{j}-1$ in $E_{\mathcal{M}_{r}G}^{c}$ vanish when $x=1$, and
$\chi(\mathcal{M}_{r}G)=E_{\mathcal{M}_{r}G}^{c}(1)$ by Remark \ref{rem:duality}(1). 
\end{proof}
We now prove Theorem \ref{thm:E-SLn}. 
\begin{thm}
Let $G=SL(n,\mathbb{C})$. Then, we have $\chi(\mathcal{M}_{r}G)=n^{r-1}$
and 
\[
E_{\mathcal{M}_{r}G}^{c}(x)=\sum_{\underline{n}\in\mathcal{P}_{n}}\frac{1}{\delta(\underline{n})}\,p_{\underline{n}}(x)^{r},
\]
where $\delta(\underline{n}):=\prod_{j=1}^{n}a_{j}!\,j^{a_{j}}$,
and $p_{\underline{n}}(x)=\frac{1}{x-1}\prod_{j=1}^{n}(x{}^{j}-1)^{a_{j}}$. 
\end{thm}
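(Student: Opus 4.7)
The approach is purely formal: Theorem \ref{thm:Table} is a cosmetic rewriting of Theorem \ref{thm:E-SLn} (which has just been proved), designed to display $E^{c}_{\mathcal{M}_{r}G}$ as a weighted sum of pure $r$-th powers indexed by partitions, one power per conjugacy class of the Weyl group $S_n$. The plan is simply to distribute the global factor $(x-1)^{-r}$ across the sum and to absorb it into the inner product so that each summand becomes $\delta(\underline{n})^{-1}\,p_{\underline{n}}(x)^{r}$.

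Concretely, from Theorem \ref{thm:E-SLn} I start with
\[
E_{\mathcal{M}_{r}G}^{c}(x)=\frac{1}{(x-1)^{r}}\sum_{\underline{n}\in\mathcal{P}_{n}}\prod_{j=1}^{n}\frac{(x^{j}-1)^{a_{j}r}}{a_{j}!\,j^{a_{j}}}.
\]
For a fixed partition $\underline{n}=[1^{a_{1}}\cdots n^{a_{n}}]$, the numerical constants separate from the polynomial factors. Since $\delta(\underline{n})=\prod_{j}a_{j}!\,j^{a_{j}}$, I can write
\[
\prod_{j=1}^{n}\frac{(x^{j}-1)^{a_{j}r}}{a_{j}!\,j^{a_{j}}}=\frac{1}{\delta(\underline{n})}\Bigl(\prod_{j=1}^{n}(x^{j}-1)^{a_{j}}\Bigr)^{r}.
\]

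Next, I pull $(x-1)^{-r}$ inside the $r$-th power to complete the formation of $p_{\underline{n}}$:
\[
\frac{1}{(x-1)^{r}}\cdot\frac{1}{\delta(\underline{n})}\Bigl(\prod_{j=1}^{n}(x^{j}-1)^{a_{j}}\Bigr)^{r}=\frac{1}{\delta(\underline{n})}\Bigl(\frac{1}{x-1}\prod_{j=1}^{n}(x^{j}-1)^{a_{j}}\Bigr)^{r}=\frac{p_{\underline{n}}(x)^{r}}{\delta(\underline{n})}.
\]
Summing over $\underline{n}\in\mathcal{P}_{n}$ yields the statement of Theorem \ref{thm:Table}.

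The only minor point to verify, more a sanity check than an obstacle, is that $p_{\underline{n}}(x)$ is indeed a polynomial rather than just a rational function. This is automatic: for any partition of $n\geq 1$, one has $\sum_{j}a_{j}\geq 1$, so the numerator $\prod_{j}(x^{j}-1)^{a_{j}}$ contains at least one factor $x^{j}-1$, each of which is divisible by $x-1$. No further analytic or geometric input is required; the whole argument is pure bookkeeping on the formula already supplied by Theorem \ref{thm:E-SLn}.
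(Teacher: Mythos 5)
Your proposal is correct and follows essentially the same route as the paper: the paper's own proof of Theorem \ref{thm:Table} simply invokes Theorem \ref{thm:sl_n.gl_n} and Corollary \ref{cor:E-poly-GLn}, which is exactly the content of Theorem \ref{thm:E-SLn} that you take as your starting point. Your explicit bookkeeping --- separating $\delta(\underline{n})$ from the polynomial factors, absorbing $(x-1)^{-r}$ into the $r$-th power, and checking that $p_{\underline{n}}$ is genuinely a polynomial --- is exactly the (routine) algebra the paper leaves implicit.
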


\begin{proof}
The formula for $E_{\mathcal{M}_{r}G}^{c}(x)$ follows immediately
from Theorem \ref{thm:sl_n.gl_n} and Corollary \ref{cor:E-poly-GLn}.
For the Euler characteristic, we need to compute $E_{\mathcal{M}_{r}G}^{c}(1)$.
First note that $p_{\underline{n}}(x)$ is a polynomial and can be
factorized as $p_{\underline{n}}(x)=(x-1)^{m}h(x)$ with $h\in\mathbb{Z}[x]$
and $m=\left(\sum_{j=1}^{n}a_{j}\right)-1$. So, $p_{\underline{n}}(1)=0$
unless $\sum_{j=1}^{n}a_{j}=1$. The only partition with $\sum_{j=1}^{n}a_{j}=1$
is $\underline{n}=[n^{1}]$ (just one part, of length $n$), which
corresponds to a cyclic permutation such as $(1\cdots n)\in S_{n}$.
The size of its conjugacy class is $c_{\underline{n}}=(n-1)!$ and
we get 
\[
\chi\left(\mathcal{M}_{r}G\right)=E_{\mathcal{M}_{r}G}^{c}(1)=\frac{1}{n!}(n-1)!\lim_{x\to1}\left(\frac{x^{n}-1}{x-1}\right)^{r}=\frac{1}{n}\ n^{r}=n^{r-1},
\]
as we wanted to show. 
\end{proof}
\begin{rem}
The $E^{c}$-polynomials of $\mathcal{M}_{r}G$ for $SL(2,\mathbb{C})$
and $SL(3,\mathbb{C})$ are already present in \cite{LM}. For $n\geq4$
these formulas are new, and can also be upgraded to mixed Hodge polynomials
by using Remark \ref{rem:duality}(1) and Poincaré duality. 
\end{rem}

\begin{example}
\label{exa:table-SLn}The following table gives the explicit values
of $\delta(\underline{n})$ and $p_{\underline{n}}(x)$ up to $n=5$
(in each row, the ordering is preserved). All the formulas can be
easily implemented in the available computer software packages (in
this paper, most of our calculations were performed with GAP). For
simplicity, the notation $[12]$ refers to a partition of $n=3$ with
two cycles: one of length 1, another of length 2 (not a cycle of length
twelve).\\[-3mm]

\noindent %
\begin{tabular}{c|cccc}
\multicolumn{1}{c|}{$n$} & $|\mathcal{P}_{n}|$  & $\underline{n}$  & $\delta(\underline{n})$  & $p_{\underline{n}}(x)$\tabularnewline
\hline 
\hline 
$2$  & $2$  & $[2];\ [1^{2}]$  & $2;\ 2$  & $x+1;\ x-1$\tabularnewline
\hline 
$3$  & $3$  & $\begin{array}{c}
[3];\\{}
[12];\ [1^{3}]
\end{array}$  & $\begin{array}{c}
3;\\
2;\ 6
\end{array}$  & $\begin{array}{c}
x^{2}+x+1;\\
x^{2}-1;\ \left(x-1\right)^{2}
\end{array}$\tabularnewline
\hline 
$4$  & $5$  & $\begin{array}{c}
[4];\\{}
[13];\ [2^{2}];\\{}
[1^{2}2];\ [1^{4}]
\end{array}$  & $\begin{array}{c}
4;\\
3;\ 8;\\
4;\,24
\end{array}$  & $\begin{array}{c}
x^{3}+x^{2}+x+1;\\
x^{3}-1;\ \left(x^{2}-1\right)\left(x+1\right);\\
\left(x-1\right)^{2}\left(x+1\right);\ \left(x-1\right)^{3}
\end{array}$\tabularnewline
\hline 
$5$  & $7$  & $\begin{array}{c}
[5];\ [14];\\{}
[1^{2}3];\ [23];\\{}
[12^{2}];\ [1^{3}2];\ [1^{5}]
\end{array}$  & $\begin{array}{c}
5;\ 4;\\
6;\ 6;\\
8;\ 12;\ 120
\end{array}$  & $\begin{array}{c}
\frac{x^{5}-1}{x-1};\ x^{4}-1;\\
\left(x^{3}-1\right)\left(x-1\right);\ \left(x^{3}-1\right)\left(x+1\right);\\
\left(x^{2}-1\right)^{2};\ \left(x-1\right)^{3}(x+1);\ \left(x-1\right)^{4}
\end{array}$\tabularnewline
\hline 
\end{tabular}
\end{example}

For example, with $n=4$, the table immediately gives 
\[
E^{c}(x)=\frac{1}{4}(x^{3}+x^{2}+x+1)^{r}+\frac{1}{3}(x^{3}-1)^{r}+\frac{1}{8}(x^{2}-1)^{r}(x+1)^{r}+\frac{1}{4}(x-1)^{2r}(x+1)^{r}+\frac{1}{24}(x-1)^{3r},
\]
for any $r\geq1$. 

\subsection{Symmetric products and Cheah's formula. }

So far, our approach to Hodge numbers for the character varieties
$\mathcal{M}_{r}G$, for $G=GL(n,\mathbb{C})$ and $SL(n,\mathbb{C})$
is well adapted to fixing $n\in\mathbb{N}$, and let $r$ be arbitrary,
as we can see from theorems \ref{thm:mu-GLn} and \ref{cor:E-poly-GLn}.
On the other hand, since the $GL(n,\mathbb{C})$-character varieties
of $\mathbb{Z}^{r}$ are symmetric products 
\[
\mathcal{M}_{r}G=\mathcal{M}_{r}GL(n,\mathbb{C})=\sym^{n}(\mathbb{C}^{*})^{r},
\]
as in Proposition \ref{prop:GLn-char-var}, we can apply a formula
of J. Cheah \cite{Ch} for the mixed Hodge numbers of symmetric products.
Indeed, this will lead to an ``orthogonal'' approach: by fixing
small values for $r\in\mathbb{N}$, we obtain simple formulas valid
for all $n\in\mathbb{N}$.

Let $X$ be a quasi-projective variety with given compactly supported
Hodge numbers $h_{c}^{k,p,q}(X)$. Cheah's formula gives the generating
function of the mixed Hodge polynomials of all symmetric products
$X^{(n)}=\sym^{n}X$ is (see \cite{Ch}): 
\begin{equation}
\sum_{n\geq0}\,\mu_{X^{(n)}}^{c}(t,u,v)\,z^{n}=\prod_{p,q,k}\left(1-(-1)^{k}u^{p}v^{q}t{}^{k}z\right)^{(-1)^{k+1}h_{c}^{k,p,q}(X)}.\label{eq:Cheah}
\end{equation}

We start by observing that, for varieties satisfying Poincaré duality,
Cheah's formula stays unaffected when passing from $\mu^{c}$ to $\mu$
and from $h_{c}^{k,p,q}$ to $h^{k,p,q}$. 
\begin{prop}
\label{prop:Cheah-usual-H}Let $X$ satisfy Poincaré duality. Then
\[
\sum_{n\geq0}\,\mu_{X^{(n)}}(t,u,v)\,z^{n}=\prod_{p,q,k}\left(1-(-1)^{k}u^{p}v^{q}t{}^{k}z\right)^{(-1)^{k+1}h^{k,p,q}(X)}.
\]
\end{prop}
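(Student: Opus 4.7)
The plan is to derive the formula directly from Cheah's formula (Equation \eqref{eq:Cheah}) via the change of variables $(t,u,v) \mapsto (t^{-1}, u^{-1}, v^{-1})$ combined with Poincaré duality. For this, I first need that $X^{(n)}$ itself satisfies Poincaré duality of complex dimension $nd$, where $d := \dim_{\mathbb{C}} X$: this follows because $X^{n}$ inherits Poincaré duality from $X$ via the Künneth formula, and passing to the finite quotient $X^{(n)} = X^{n}/S_{n}$ preserves (mixed) Poincaré duality by the result proved in the Appendix. Duality then yields, termwise in $n$, the identity $\mu_{X^{(n)}}^{c}(t^{-1}, u^{-1}, v^{-1}) = (t^{2} u v)^{-nd}\, \mu_{X^{(n)}}(t,u,v)$, and analogously $h_{c}^{k,p,q}(X) = h^{2d-k,\,d-p,\,d-q}(X)$ for $X$ itself.

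Next, I would substitute $(t,u,v) \mapsto (t^{-1}, u^{-1}, v^{-1})$ in Cheah's identity, and then $z \mapsto (t^{2} u v)^{d}\, z$. This last step is a legitimate operation in the formal power series ring in $z$ with coefficients in $\mathbb{Z}[t^{\pm 1}, u^{\pm 1}, v^{\pm 1}]$. The factors $(t^{2} u v)^{-nd}$ coming from Poincaré duality on each summand of the left-hand side are then exactly absorbed into the new powers of $z^{n}$, so that the left-hand side simplifies to $\sum_{n \geq 0} \mu_{X^{(n)}}(t,u,v)\, z^{n}$, which is what we want.

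On the right-hand side of Cheah's formula, after these substitutions each factor becomes $1 - (-1)^{k} u^{d-p} v^{d-q} t^{2d-k} z$. I would then reindex the product by $(k',p',q') := (2d-k,\, d-p,\, d-q)$. Using Poincaré duality for $X$ in the form $h_{c}^{k,p,q}(X) = h^{k',p',q'}(X)$, and the parity identity $(-1)^{k+1} = (-1)^{k'+1}$ (valid since $2d$ is even), the product transforms term-by-term into exactly $\prod_{p',q',k'}(1 - (-1)^{k'} u^{p'} v^{q'} t^{k'} z)^{(-1)^{k'+1} h^{k',p',q'}(X)}$. The whole argument is essentially bookkeeping once one observes the stability of Cheah's identity under the combined substitution $(t,u,v,z) \mapsto (t^{-1}, u^{-1}, v^{-1}, (t^{2} u v)^{d}\, z)$, and I do not foresee any real obstacle beyond confirming Poincaré duality for $X$, $X^{n}$, and $X^{(n)}$, which has already been ensured.
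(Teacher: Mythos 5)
Your proposal is correct and follows essentially the same route as the paper: apply the substitution $(t,u,v,z)\mapsto(t^{-1},u^{-1},v^{-1},(t^{2}uv)^{d}z)$ to Cheah's formula, use Poincar\'e duality on $X^{(n)}$ to convert $\mu^{c}_{X^{(n)}}$ to $\mu_{X^{(n)}}$ on the left, and reindex $(k',p',q')=(2d-k,d-p,d-q)$ with $h_{c}^{k,p,q}(X)=h^{k',p',q'}(X)$ on the right. Your extra remark that $X^{(n)}$ inherits Poincar\'e duality from $X$ via K\"unneth and the finite-quotient result is a point the paper leaves implicit, but otherwise the arguments coincide.
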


\begin{proof}
This is a simple calculation. Let $X$ have complex dimension $d$.
From the relation between $\mu_{X}$ and $\mu_{X}^{c}$, in Remark
\ref{rem:duality}(1), Cheah's formula \eqref{eq:Cheah} is equivalent
to: 
\begin{eqnarray*}
\sum_{n\geq0}\,\mu_{X^{(n)}}(t^{-1},u^{-1},v^{-1})\,(t^{2}uv)^{nd}z^{n} & = & \prod_{p,q,k}\left(1-(-1)^{k}u^{p}v^{q}t{}^{k}z\right)^{(-1)^{k+1}h^{2d-k,d-p,d-q}(X)}
\end{eqnarray*}
Now, changing the indices $(k,p,q)$ to $(k',p',q')=(2d-k,d-p,d-q)$,
which preserves the parity of $k$, we obtain 
\[
\sum_{n\geq0}\,\mu_{X^{(n)}}\left(\frac{1}{t},\frac{1}{u},\frac{1}{v}\right)\left((t^{2}uv)^{d}z\right)^{n}=\prod_{p',q',k'}\left(1-(-1)^{k'}u^{-p'}v^{-q'}t{}^{-k'}(t^{2d}u^{d}v^{d}z)\right)^{(-1)^{k'+1}h^{k',p',q'}(X)}
\]
which is clearly equivalent to the desired formula, under the substitution:\\
 $(t^{-1},u^{-1},v^{-1},t^{2d}u^{d}v^{d}z)\mapsto(t,u,v,z)$. 
\end{proof}
For round varieties, as before, all products reduce to a single index. 
\begin{prop}
\label{prop:Cheah-round}Let $X$ be a round variety of dimension
$d$ satisfying Poincaré duality. Then 
\[
\sum_{n\geq0}\,\mu_{X^{(n)}}(t,u,v)\,z^{n}=\prod_{k}\left(1-(-tuv)^{k}z\right)^{(-1)^{k+1}h^{k,k,k}(X)}.
\]
\end{prop}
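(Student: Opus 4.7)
The plan is to deduce this proposition as an immediate specialization of Proposition \ref{prop:Cheah-usual-H}, which already provides the generating function expression for any variety $X$ satisfying Poincar\'e duality, using only the defining property of roundness (Definition \ref{def:Round}).

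First, I would invoke Proposition \ref{prop:Cheah-usual-H} to write
\[
\sum_{n\geq 0}\mu_{X^{(n)}}(t,u,v)\,z^n \;=\; \prod_{k,p,q}\bigl(1-(-1)^k u^p v^q t^k z\bigr)^{(-1)^{k+1} h^{k,p,q}(X)}.
\]
Next, I would recall that by Definition \ref{def:Round}, the hypothesis that $X$ is round means $h^{k,p,q}(X) = 0$ whenever $(p,q) \neq (k,k)$. Consequently, every factor in the triple product above with $(p,q)\neq (k,k)$ has exponent zero and equals $1$, so the product collapses to a single product indexed by $k$, in which $p = q = k$.

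Finally, I would simplify the surviving factors. With $p = q = k$, the base becomes $1 - (-1)^k u^k v^k t^k z = 1 - (-tuv)^k z$, and the exponent becomes $(-1)^{k+1} h^{k,k,k}(X)$. This yields exactly the stated identity
\[
\sum_{n\geq 0}\mu_{X^{(n)}}(t,u,v)\,z^n = \prod_{k}\bigl(1 - (-tuv)^k z\bigr)^{(-1)^{k+1}h^{k,k,k}(X)}.
\]
There is no genuine obstacle here: the entire content of the proposition is that the Poincar\'e-duality-compatible Cheah formula collapses cleanly for round varieties, because roundness forces the Hodge diamond to be supported on a single diagonal. The only minor care required is the correct bookkeeping of the sign $(-1)^k$ being absorbed into $(-tuv)^k$.
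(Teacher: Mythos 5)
Your proposal is correct and follows exactly the paper's argument: Proposition \ref{prop:Cheah-usual-H} plus the observation that roundness kills every factor with $(p,q)\neq(k,k)$, collapsing the triple product to a single product in $k$ with base $1-(-tuv)^k z$. The paper's own proof is a one-line version of precisely this specialization.
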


\begin{proof}
This is immediate from Proposition \eqref{prop:Cheah-usual-H}, since
the only non-zero Hodge numbers of a round variety $X$ are $h^{k,k,k}(X)$,
for some $k$. 
\end{proof}
We are now ready to apply this formula to $\mathcal{M}_{r}GL(n,\mathbb{C})$.
Since this space is $\sym^{n}(\mathbb{C}^{*})^{r}$, we should consider
$X=(\mathbb{C}^{*})^{r}$. 
\begin{cor}
\label{cor:Cheah-for-M_r}Let $G=GL(n,\mathbb{C})$. Then: 
\[
\sum_{n\geq0}\,\mu_{\mathcal{M}_{r}G}(t,u,v)\,z^{n}=\prod_{k\geq0}\left(1-(-tuv)^{k}z\right)^{(-1)^{k+1}\binom{r}{k}}=\frac{\prod_{k\,\odd}\left(1+(tuv)^{k}z\right)^{\binom{r}{k}}}{\prod_{k\,\even}\left(1-(tuv)^{k}z\right)^{\binom{r}{k}}}.
\]
\end{cor}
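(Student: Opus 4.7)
The plan is to apply Proposition \ref{prop:Cheah-round} directly, with $X=(\mathbb{C}^*)^r$. The work therefore reduces to verifying the hypotheses of that proposition for $X$, computing its Hodge numbers, and then doing a short bookkeeping step to reach the factored form on the right.

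First I would invoke Proposition \ref{prop:GLn-char-var} to identify $\mathcal{M}_r G \cong \sym^n(\mathbb{C}^*)^r$ so that the left-hand side is exactly the Cheah generating function for the symmetric powers of $X=(\mathbb{C}^*)^r$. Next, I would check that this $X$ satisfies the hypotheses of Proposition \ref{prop:Cheah-round}: it is smooth (so Poincaré duality holds), and from Example \ref{exa:C*} together with the multiplicativity of $\mu$ under Cartesian products (Proposition \ref{prop:polynomials}(5)), one has
\[
\mu_{(\mathbb{C}^*)^r}(t,u,v) = (1+tuv)^r = \sum_{k\geq 0}\binom{r}{k}(tuv)^k,
\]
which shows both that $X$ is round of dimension $r$ and that $h^{k,k,k}(X)=\binom{r}{k}$ are its only nonzero Hodge numbers.

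Plugging this into Proposition \ref{prop:Cheah-round} immediately gives
\[
\sum_{n\geq 0}\mu_{\mathcal{M}_r G}(t,u,v)\, z^n = \prod_{k\geq 0}\bigl(1-(-tuv)^k z\bigr)^{(-1)^{k+1}\binom{r}{k}},
\]
which is the first equality. For the second, I would separate the product according to the parity of $k$: for odd $k$, $(-tuv)^k = -(tuv)^k$ and $(-1)^{k+1}=1$, contributing factors $(1+(tuv)^k z)^{\binom{r}{k}}$ in the numerator; for even $k$, $(-tuv)^k = (tuv)^k$ and $(-1)^{k+1}=-1$, contributing $(1-(tuv)^k z)^{\binom{r}{k}}$ in the denominator. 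Since $\binom{r}{k}=0$ for $k>r$, both products are finite.

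There is essentially no obstacle here — the statement is a direct application of Proposition \ref{prop:Cheah-round} combined with the elementary computation of $\mu_{(\mathbb{C}^*)^r}$. The only point requiring mild care is ensuring the reader sees that the hypotheses (roundness plus Poincaré duality) are satisfied by $(\mathbb{C}^*)^r$, but both follow trivially from smoothness and the Künneth formula applied to Example \ref{exa:C*}.
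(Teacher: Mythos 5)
Your proof is correct and follows exactly the paper's route: the paper likewise sets $X=(\mathbb{C}^*)^r$ with $h^{k,k,k}(X)=\binom{r}{k}$ and cites Proposition \ref{prop:Cheah-round} as immediate. Your additional verification of roundness and the parity split for the second equality are just details the paper leaves implicit.
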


\begin{proof}
Letting $X=(\mathbb{C}^{*})^{r}$, $d=r=\dim X$, and $h^{k,k,k}(X)=\binom{r}{k}$,
$0\leq k\leq r$, the proof is immediate from Proposition \eqref{prop:Cheah-round}.
\end{proof}
\begin{example}
The simplest example of this formula is when $r=1$ {[}For $r=0$,
$\sym^{n}(\mathbb{C}^{*})^{0}$ is a single point{]}. In this case,
$X=\mathbb{C}^{*}$ and $\binom{1}{0}=\binom{1}{1}=1$, so we expand
the right hand-side as a power series in $z$, as: 
\[
\frac{1+tuvz}{1-z}=1+\sum_{n\geq1}(1+uvt)\,z^{n}
\]
In particular, $\mu_{\mathcal{M}_{1}GL(n,\mathbb{C})}(t,u,v)=1+tuv$,
for $n\geq1$, which agrees with the fact that $\mathcal{M}_{1}GL(n,\mathbb{C})=\sym^{n}(\mathbb{C}^{*})\cong\mathbb{C}^{n-1}\times\mathbb{C}^{*}$
has the same Hodge structure than $\mathbb{C}^{*}$. 
\end{example}

The case with $r=2$ is an interesting result in itself. 
\begin{prop}
\label{prop.r=00003D2}Let $G=GL(n,\mathbb{C})$, $r=2$ and $n\geq1$.
Then: 
\[
\mu_{\mathcal{M}_{2}G}(t,u,v)=(1+tuv)^{2}(1+(tuv)^{2}+...+(tuv)^{2n-2}),
\]
$P_{\mathcal{M}_{2}G}(t)=(1+t)^{2}(1+t^{2}+\cdots+t^{2n-2})$ and
$\chi(\mathcal{M}_{2}G)=0$.
\end{prop}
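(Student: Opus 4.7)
The plan is to apply Corollary~\ref{cor:Cheah-for-M_r} with $r=2$, extract the coefficient of $z^n$ from the resulting rational generating function, and then specialize.

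First I would write out Corollary~\ref{cor:Cheah-for-M_r} for $r=2$. Since $\binom{2}{0}=\binom{2}{2}=1$ (even) and $\binom{2}{1}=2$ (odd), the generating function becomes, setting $s := tuv$,
\[
\sum_{n\geq 0} \mu_{\mathcal{M}_2 G}(t,u,v)\, z^n \;=\; \frac{(1+sz)^2}{(1-z)(1-s^2 z)}.
\]
Note that one cannot cancel a factor because $(1-s^2z)\neq (1-sz)(1+sz)$; the correct identity is $(1-sz)(1+sz)=1-s^2z^2$.

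Next I would separate the polynomial part. A direct computation gives $(1+sz)^2 - (1-z)(1-s^2z) = (1+s)^2 z$, so
\[
\frac{(1+sz)^2}{(1-z)(1-s^2 z)} \;=\; 1 + (1+s)^2\,\frac{z}{(1-z)(1-s^2 z)}.
\]
Expanding $\frac{1}{(1-z)(1-s^2z)} = \sum_{m\geq 0}\frac{1-s^{2(m+1)}}{1-s^2}z^m$ by partial fractions (equivalently, as a Cauchy product), I would read off that for $n\geq 1$ the coefficient of $z^n$ in the right-hand side is
\[
(1+s)^2\cdot\frac{1-s^{2n}}{1-s^2} \;=\; (1+s)^2\bigl(1+s^2+s^4+\cdots+s^{2n-2}\bigr),
\]
which, after restoring $s=tuv$, is the claimed formula for $\mu_{\mathcal{M}_2 G}(t,u,v)$.

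Finally, the Poincar\'e polynomial is obtained by the substitution $u=v=1$ (Definition~\ref{def:Hodgepol}), giving $P_{\mathcal{M}_2 G}(t)=(1+t)^2(1+t^2+\cdots+t^{2n-2})$. For the Euler characteristic I would use $\chi(X)=P_X(-1)$; the factor $(1+t)^2$ vanishes at $t=-1$, so $\chi(\mathcal{M}_2 G)=0$. There is no real obstacle here: the computation is a routine rational-function manipulation once Corollary~\ref{cor:Cheah-for-M_r} is in hand. The only point requiring a moment of care is the tempting but invalid cancellation mentioned above; keeping the numerator and denominator intact and performing the ``polynomial plus remainder'' split is the cleanest way to avoid it.
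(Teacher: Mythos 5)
Your proposal is correct and follows essentially the same route as the paper: both apply Corollary~\ref{cor:Cheah-for-M_r} with $r=2$, split off the constant term $1$, expand $\frac{(1+sz)^2}{(1-z)(1-s^2z)}$ by partial fractions to extract the coefficient $(1+s)^2(1+s^2+\cdots+s^{2n-2})$ of $z^n$, and then specialize. Your ``polynomial plus remainder'' split is just a repackaging of the paper's partial-fraction identity, so there is nothing further to add.
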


\begin{proof}
We now have $\binom{2}{2}=\binom{2}{0}=1$ and $\binom{2}{1}=2$ so
we expand the right hand-side of Corollary \ref{cor:Cheah-for-M_r}
as a power series in $z$, writing $\lambda=tuv$ for simplicity:
\begin{eqnarray*}
\frac{\left(1+\lambda z\right)^{2}}{(1-z)(1-\lambda^{2}z)} & = & 1+\frac{(1+\lambda)^{2}}{1-\lambda^{2}}\left(\frac{1}{1-z}-\frac{1}{1-\lambda^{2}z}\right)\\
 & = & 1+(1+\lambda)^{2}\sum_{n\geq0}\left(1+\lambda^{2}+\cdots+\lambda^{2n-2}\right)z{}^{n}.
\end{eqnarray*}
This gives the desired formulas for $\mathcal{M}_{2}GL(n,\mathbb{C})$,
with $\lambda=tuv$, with $uv=1$ for $P_{\mathcal{M}_{2}G}$ and
$\lambda=-1$ for the Euler characteristic. 
\end{proof}
The next corollary follows immediately from Theorem \ref{thm:sl_n.gl_n}. 
\begin{cor}
Let $G=SL(n,\mathbb{C})$, $r=2$ and $n\geq1$. Then: 
\[
\mu_{\mathcal{M}_{2}G}(t,x)=1+(tx)^{2}+...+(tx)^{2n-2},
\]
$P_{\mathcal{M}_{2}G}(t)=(1+t^{2}+\cdots+t^{2n-2})$ and $\chi(\mathcal{M}_{2}G)=n$.
\end{cor}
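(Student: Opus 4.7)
The plan is straightforward: the claimed formula for $\mu_{\mathcal{M}_{2}SL(n,\mathbb{C})}$ follows by dividing the $GL(n,\mathbb{C})$ formula (just established) by $(1+tuv)^{2}$, using the general comparison $\mu_{\mathcal{M}_{r}GL(n,\mathbb{C})}(t,x) = (1+tx)^{r}\,\mu_{\mathcal{M}_{r}SL(n,\mathbb{C})}(t,x)$ from Theorem \ref{thm:sl_n.gl_n}, specialized at $r=2$ and $x=uv$.

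First I would invoke the immediately preceding proposition to write $\mu_{\mathcal{M}_{2}GL(n,\mathbb{C})}(t,u,v) = (1+tuv)^{2}\bigl(1+(tuv)^{2}+\cdots+(tuv)^{2n-2}\bigr)$. Then Theorem \ref{thm:sl_n.gl_n} with $r=2$ gives $(1+tuv)^{2}\,\mu_{\mathcal{M}_{2}SL(n,\mathbb{C})}(t,u,v) = (1+tuv)^{2}\bigl(1+(tuv)^{2}+\cdots+(tuv)^{2n-2}\bigr)$, and since $(1+tuv)^{2}$ is a nonzero element of the polynomial ring, it can be cancelled, yielding the stated mixed Hodge polynomial.

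For the Poincaré polynomial I would set $u=v=1$, reducing $tuv$ to $t$ and thus giving $P_{\mathcal{M}_{2}SL(n,\mathbb{C})}(t) = 1+t^{2}+\cdots+t^{2n-2}$; alternatively, one notes that $\mathcal{M}_{2}SL(n,\mathbb{C})$ is round by Proposition \ref{prop:mHs-General}, so Proposition \ref{prop:round-necessary} gives $\mu(t,u,v) = P(tuv)$ directly. For the Euler characteristic I would substitute $t=-1$ (and $u=v=1$) into $\mu$, whereupon each monomial $(tuv)^{2k}$ evaluates to $1$, and the $n$ terms sum to $n$.

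No part of this argument poses a real obstacle, since all the work is already done in Theorem \ref{thm:sl_n.gl_n} and in the preceding proposition; the only thing to verify is that the cancellation of $(1+tuv)^{2}$ is legitimate, which is immediate in the integral domain $\mathbb{Z}[t,u,v]$.
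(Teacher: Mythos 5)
Your proposal is correct and is essentially the paper's own argument: the paper derives this corollary immediately from Theorem \ref{thm:sl_n.gl_n} together with the preceding proposition for $\mathcal{M}_{2}GL(n,\mathbb{C})$, exactly as you do, with the Poincar\'e polynomial and Euler characteristic obtained by the same specializations $u=v=1$ and $t=-1$. Your extra remark about the legitimacy of cancelling $(1+tuv)^{2}$ in the integral domain $\mathbb{Z}[t,u,v]$ is a harmless (and valid) added precision.
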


\begin{rem}
The equality of Poincaré polynomials $P_{\mathcal{M}_{2}SL(n,\mathbb{C})}=P_{\mathbb{P}_{\mathbb{C}}^{n-1}}$
is not a coincidence. In fact, by non-abelian Hodge correspondence,
$\mathcal{M}_{2}SL(n,\mathbb{C})$ is diffeomorphic to the cotangent
bundle of the projective space $\mathbb{P}_{\mathbb{C}}^{n-1}$ parametrizing
semistable bundles over an elliptic curve of rank $n$ and trivial
determinant (see \cite{BF,Tu}). 
\end{rem}

\subsection{A combinatorial identity}

We finish the article with an interesting purely combinatorial identity.
We could not find out whether this identity was noticed before. Recall
that $I_{n}$ is the identity $n\times n$ matrix and $M_{\sigma}$
a permutation matrix associated to $\sigma\in S_{n}$. 
\begin{thm}
Fix $r\in\mathbb{N}_{0}$. Then, for formal variables $x,z$ (or considering
$x,z\in\mathbb{C}$ in a small disc around the origin) we have: 
\[
\prod_{k\geq0}\left(1-x^{k}z\right)^{(-1)^{k+1}\binom{r}{k}}=\sum_{n\geq0}\sum_{\sigma\in S_{n}}\frac{z^{n}}{n!}\det(I_{n}-xM_{\sigma})^{r}.
\]
\end{thm}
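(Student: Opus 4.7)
The identity is in fact an immediate consequence of the two formulas already established for $\mu_{\mathcal{M}_{r}GL(n,\mathbb{C})}$ in the preceding subsections, combined via the substitution $x=-tuv$. The plan is to recognize each side of the claimed identity as the same generating function, written in two different ways.

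First, I would read the right-hand side: for each fixed $n$, the coefficient of $z^{n}$ is $\frac{1}{n!}\sum_{\sigma\in S_{n}}\det(I_{n}-xM_{\sigma})^{r}$, which is a polynomial in $x$. Comparing with Proposition \ref{prop:mHs-GLn-det}, which gives
\[
\mu_{\mathcal{M}_{r}GL(n,\mathbb{C})}(t,u,v)=\frac{1}{n!}\sum_{\sigma\in S_{n}}\det(I_{n}+tuv\,M_{\sigma})^{r},
\]
I see that the coefficient of $z^{n}$ on the right-hand side of the identity coincides with $\mu_{\mathcal{M}_{r}GL(n,\mathbb{C})}(t,u,v)$ under the specialisation $x=-tuv$. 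Hence the right-hand side equals $\sum_{n\geq0}\mu_{\mathcal{M}_{r}GL(n,\mathbb{C})}(t,u,v)\,z^{n}$ after this substitution.

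Second, I would invoke Corollary \ref{cor:Cheah-for-M_r}, which is Cheah's formula applied to $X=(\mathbb{C}^{*})^{r}$ and gives
\[
\sum_{n\geq0}\mu_{\mathcal{M}_{r}GL(n,\mathbb{C})}(t,u,v)\,z^{n}=\prod_{k\geq0}\bigl(1-(-tuv)^{k}z\bigr)^{(-1)^{k+1}\binom{r}{k}}.
\]
Under $x=-tuv$ this becomes $\prod_{k\geq0}(1-x^{k}z)^{(-1)^{k+1}\binom{r}{k}}$, which is precisely the left-hand side of the claimed identity. Equating the two expressions yields the identity for every triple $(t,u,v)$, equivalently for every $x$ in the image of $(t,u,v)\mapsto -tuv$.

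Finally, to pass from this particular parametric family of values to the identity of formal power series in $x,z$, I would observe that both sides are, coefficient-wise in $z$, polynomials in a single variable $x$. Since the map $(t,u,v)\mapsto -tuv$ is surjective onto $\mathbb{C}$ (already the one-parameter specialisation $u=v=1$, $t=-x$ suffices), the polynomial identity in $x$ holds identically, and hence so does the identity of formal power series. There is no real obstacle here: the entire content is the geometric observation that the two independent computations of $\mu_{\mathcal{M}_{r}GL(n,\mathbb{C})}$---one via Weyl group averaging over $S_{n}$, the other via Cheah's symmetric-product generating function---must agree, which is exactly the claimed combinatorial identity after renaming $-tuv$ to $x$.
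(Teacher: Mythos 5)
Your proof is correct and follows essentially the same route as the paper: both combine Corollary \ref{cor:Cheah-for-M_r} with Proposition \ref{prop:mHs-GLn-det} and set $x=-tuv$. The only (minor) point the paper adds that you omit is an explicit convergence estimate justifying the parenthetical claim about $x,z\in\mathbb{C}$ in a small disc, namely the bound $\sum_{\sigma\in S_{n}}\prod_{j}(1-x^{j})^{a_{j}r}<n!\,2^{nr}$ for $|x|<1$.
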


\begin{proof}
Putting together Corollary \ref{cor:Cheah-for-M_r} and the formula
for $\mu_{\mathcal{M}_{r}GL(n,\mathbb{C})}$ in Proposition \ref{prop:mHs-GLn-det}
we obtain 
\begin{eqnarray*}
\sum_{n\geq0}\,\frac{1}{n!}\sum_{\sigma\in S_{n}}\left[\det\left(I_{n}+tuv\,M_{\sigma}\right)\right]^{r}\,z^{n} & = & \prod_{k\geq0}\left(1-(-tuv)^{k}z\right)^{(-1)^{k+1}\binom{r}{k}}
\end{eqnarray*}
which becomes the desired identity, by setting $x=-tuv$. Note that
for $r=0$ the formula is still valid and reduces to the geometric
series. The formula holds also for $z,x\in\mathbb{C}$ where the series
converges. We readily check that convergence holds whenever $|x|<1$
and $|z|<2^{-r}$, using the bound, valid for $|x|<1$, 
\[
\sum_{\sigma\in S_{n}}\prod_{j=1}^{n}(1-x^{j})^{a_{j}r}<\sum_{\sigma\in S_{n}}2^{|\sigma|r}<n!\,2^{nr},
\]
where $|\sigma|=\sum_{j}a_{j}$ denotes the number of cycles of $\sigma$,
with $a_{j}$ parts of size $j$, as in the proof of Theorem \ref{thm:mu-GLn}. 
\end{proof}

\appendix

\section{Multiplicativity of the $E$-polynomial under Fibrations}

In this appendix, we prove a multiplicative property of the $E$-polynomial
under fibrations, used in Theorem \ref{thm:sl_n.gl_n}. This is a
consequence of the fact that the Leray-Serre spectral sequence is
a spectral sequence of mixed Hodge structures.

\subsection{$E$-polynomials of fibrations}

It is well known that for an algebraic fibration of algebraic varieties
\[
Z\longrightarrow E\stackrel{\pi}{\longrightarrow}B
\]
the Poincaré polynomials do not behave multiplicatively, in general.
Then, a fortiori, a multiplicative property is not expected for the
Hodge-Deligne polyomial. But when all involved varieties are smooth
and the associated monodromy is trivial, this property turns out to
be valid for the $E$-polynomials. Moreover, if there is a finite
group $F$ acting on the three varieties, and the involved maps respect
the action - an equivariant fibration of $F$-varieties -, one gets
a multiplicative formula for their equivariant polynomials, under
the assumption that the higher direct images sheaves $R^{j}\pi_{*}\mathbb{C}_{E}$
(associated to the presheaf $U\mapsto H^{j}\left(\pi^{-1}U,\mathbb{C}_{E}\right)$
for $U\subset B$) are constant, where $\mathbb{C}_{E}$ is the constant
sheaf on $E$. 
\begin{thm}
\label{thm:multpl}Let $F$ be a finite group and consider an algebraic
fibration between smooth complex algebraic quasi-projective varieties
\[
\xymatrix{Z\ar[r] & E\ar[r]^{\pi} & B}
\]
(not necessarily locally trivial in the Zariski topology). Suppose
also that this is a fibration of $F$-varieties (all spaces are $F$-varieties
and the maps are $F$-equivariant). If $Z$ is connected and $R^{j}\pi_{*}\mathbb{C}_{E}$
are constant for every $j$, then 
\begin{eqnarray*}
E_{E}^{F}\left(u,v\right) & = & E_{Z}^{F}\left(u,v\right)\varotimes E_{B}^{F}\left(u,v\right).
\end{eqnarray*}
\end{thm}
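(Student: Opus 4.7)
The plan is to invoke the Leray--Serre spectral sequence associated to the fibration $\pi : E \to B$ and to show that it carries compatible structures of mixed Hodge modules and of $F$-representations, so that the equivariant $E$-polynomial can be computed at either the $E_2$ or the $E_\infty$ page.

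First, I would write down the Leray--Serre spectral sequence in sheaf cohomology,
\[
E_2^{p,q} = H^p\bigl(B, R^q\pi_* \mathbb{C}_E\bigr) \;\Longrightarrow\; H^{p+q}(E, \mathbb{C}).
\]
Under the hypothesis that each sheaf $R^q\pi_* \mathbb{C}_E$ is constant, and using that $Z$ is connected, one identifies the stalk of $R^q\pi_* \mathbb{C}_E$ with $H^q(Z, \mathbb{C})$; hence $R^q\pi_*\mathbb{C}_E \cong \underline{H^q(Z,\mathbb{C})}$ and the $E_2$ page simplifies via the universal coefficients isomorphism to
\[
E_2^{p,q} \;\cong\; H^p(B, \mathbb{C}) \otimes H^q(Z, \mathbb{C}).
\]

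The key input is that, for smooth quasi-projective varieties and algebraic fibrations, this is a spectral sequence in the category of mixed Hodge structures (Arapura; see also Peters--Steenbrink, Chapter 6). Since $F$ acts compatibly on $E$, $B$ and $Z$, functoriality of the Leray construction lifts this to a spectral sequence in the category of equivariant MHS, so each $E_r^{p,q}$ and each differential $d_r$ is an $F$-equivariant morphism of MHS. I would then record that the K\"unneth isomorphism on $E_2$ is itself an isomorphism of equivariant MHS, which is where the tensor product on the right-hand side of the statement comes from.

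The remaining step is formal: passing to $E$-polynomials in the representation ring $R(F)$. Because every differential $d_r : E_r^{p,q} \to E_r^{p+r,q-r+1}$ shifts total degree by $+1$, the alternating sum of classes in $K_0$ of equivariant MHS is preserved from one page to the next, so
\[
\sum_{p,q,k,a,b}(-1)^{p+q}\bigl[\mathrm{Gr}_F^a\,\mathrm{Gr}^{W_{\mathbb C}}_{a+b}\,E_r^{p,q}\bigr]\,u^a v^b
\]
is independent of $r \ge 2$ (as an element of $R(F)[u,v]$), and by convergence it equals $E_E^F(u,v)$ at $r=\infty$. Evaluating at $r=2$ with the identification above and using multiplicativity of equivariant $E$-polynomials under K\"unneth (Proposition~\ref{prop:equMhstructures}(2)) yields
\[
E_E^F(u,v) \;=\; E_B^F(u,v) \,\otimes\, E_Z^F(u,v),
\]
as required.

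The main obstacle is the invocation of the fact that Leray--Serre is a spectral sequence of MHS (and its equivariant refinement): this is not elementary and must be cited carefully from the literature. Everything else, including the alternating-sum argument and the identification of $E_2$ under the constant-monodromy hypothesis, is then essentially formal.
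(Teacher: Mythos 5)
Your proposal is correct and follows essentially the same route as the paper: both identify the $E_2$-page of the Leray--Serre spectral sequence with $H^*(B)\otimes H^*(Z)$ as (equivariant) mixed Hodge structures using the constant-$R^q\pi_*\mathbb{C}_E$ hypothesis, and then exploit the invariance of the alternating sum across pages to equate the $E$-polynomial computed at $E_2$ with that of $H^*(E)$. The only cosmetic difference is that the paper first runs the argument non-equivariantly (via the auxiliary polynomials $P_{(p,q)}(t)$ for each graded piece) and then upgrades to $R(F)[u,v]$, whereas you work with equivariant $K_0$-classes throughout.
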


\begin{proof}
The non-equivariant version of this result is the content of Proposition
2.4 in \cite{LMN}, where it is used to calculate the Serre polynomials
of certain twisted character varieties. We detail the argument here,
for the reader's convenience. Firstly, assume that the $F$-action
is trivial on the three spaces. The Leray-Serre spectral sequence
of the fibration is a sequence of mixed Hodge structures (\cite[Theorem 6.5]{PS})
and it is proved in \cite[Theorem 6.1]{DL} that under the given assumptions,
its second page $E_{2}^{a,b}$ admits an isomorphism 
\begin{eqnarray*}
E_{2}^{a,b} & \simeq & H^{a}\left(B\right)\varotimes H^{b}\left(Z\right),
\end{eqnarray*}
which is actually an isomorphism of mixed Hodge structures. In particular,
we get an equality between their respective graded pieces: 
\begin{eqnarray}
Gr_{F}^{p}Gr_{p+q}^{W}E_{2}^{a,b} & = & \bigoplus_{p'+p''=p}\,\bigoplus_{q'+q''=q}Gr_{F}^{p'}Gr_{p'+q'}^{W}H^{a}\left(B\right)\varotimes Gr_{F}^{p''}Gr_{p''+q''}^{W}H^{b}\left(Z\right)\nonumber \\
 & = & \bigoplus_{p'+p''=p}\,\bigoplus_{q'+q''=q}H^{a,p',q'}(B)\varotimes H^{b,p'',q''}(Z)\label{eq:S.s.m.H.s}
\end{eqnarray}
Using once more the fact that this is a spectral sequence of mixed
Hodge structures, we get one spectral sequence of vector spaces for
each pair $\left(p,q\right)$: 
\[
\begin{array}{ccccc}
E(p,q)_{2}^{a,b} & := & Gr_{F}^{p}Gr_{p+q}^{W}E_{2}^{a,b} & \Rightarrow & Gr_{F}^{p}Gr_{p+q}^{W}H^{a+b}\left(E\right)\end{array}.
\]
Now set 
\begin{eqnarray*}
P_{\left(p,q\right)}\left(t\right) & := & \sum_{k\geq0}\dim\left(\bigoplus_{a+b=k}E\left(p,q\right)_{2}^{a,b}\right)t^{k}.
\end{eqnarray*}
Given that $\oplus_{a+b=k}E(p,q)_{2}^{a,b}\Rightarrow\oplus_{a+b=k}Gr_{F}^{p}Gr_{p+q}^{W}H^{a+b}\left(E\right)$,
one has 
\begin{eqnarray*}
P_{\left(p,q\right)}\left(-1\right) & = & \sum_{k}\left(-1\right)^{k}\dim Gr_{F}^{p}Gr_{p+q}^{W}H^{k}\left(E\right)\\
 & = & \sum_{k}\left(-1\right)^{k}h^{k,p,q}\left(E\right).
\end{eqnarray*}
So, by definition, $E_{E}\left(u,v\right)=\sum_{p,q}P_{\left(p,q\right)}\left(-1\right)u^{p}v^{q}$.
On the other hand, using \eqref{eq:S.s.m.H.s} 
\[
P_{\left(p,q\right)}\left(-1\right)=\sum_{k}\sum_{a+b=k}\sum_{p'+p''=p}\sum_{q'+q''=q}\left(-1\right)^{a}h^{a,p',q'}\left(B\right)\left(-1\right)^{b}h^{b,p'',q''}\left(Z\right),
\]
substituting into $E_{E}\left(u,v\right)=\sum_{p,q}P_{\left(p,q\right)}\left(-1\right)u^{p}v^{q}$,
and switching summation order, one gets $E_{E}\left(u,v\right)=E_{B}\left(u,v\right)E_{Z}\left(u,v\right)$,
as wanted. Succinctly, the argument follows from the fact that the
spectral sequence above can be seen as an equality between derived
functors and, thinking in terms of K-theory, passing to cohomology
for obtaining the next sheet in the sequence does not change an alternating
sum. 

Finally, to prove the equivariant version, suppose that all the cohomologies
are $F$-modules. Then, since we have a fibration of $F$-varieties
the associated Leray-Serre spectral sequence is a spectral sequence
of $F$-modules. The associated sequences $E\left(k,m\right)_{2}^{p,q}$
are also spectral sequences of $F$-modules, since the graded pieces
for the Hodge and weight filtration are so. To get the desired equality,
it suffices to proceed as before: in each step of the we substitute
the dimension $h^{k,p,q}(\cdot)$ by the corresponding $F$-module
$\left[Gr_{F}^{p}Gr_{p+q}^{W}H^{k}\left(\cdot\right)\right]_{F}$,
and the operations performed in $\left(\mathbb{Z},+,\times\right)$
are replaced by those in the ring $\left(R\left(F\right)\left[u,v\right],\oplus,\varotimes\right)$.
\end{proof}
\begin{rem}
The above proof follows the proof of Theorem 6.1 (ii) in \cite{DL},
where the version for the equivariant weight polynomial is obtained
(which is implied by Theorem \ref{thm:multpl}, since the weight polynomial
is a specialization of the $E$-polynomial). We also remark that the
study of multiplicative invariants under fibrations, goes back at
least to work of Chern, Hirzebruch and Serre in the mid fifties \cite{CHS},
on the signature theorem.
\end{rem}

\bibliographystyle{alpha}

\begin{thebibliography}{9999}

\bibitem[Hi]{Hi}
N.~J. Hitchin.
\newblock {\em The self-duality equations on a {R}iemann surface.}
\newblock Proc. London Math. Soc. (3) \textbf{55} (1) (1987) 59--126.

\bibitem[Si]{Si}
Carlos~T. Simpson.
\newblock {\em Moduli of representations of the fundamental group of a smooth
  projective variety. {II}.}
\newblock Inst. Hautes \'{E}tudes Sci. Publ. Math. \textbf{80}  5--79 (1994).

\bibitem[HRV]{HRV}
Tam\'{a}s Hausel and Fernando Rodriguez-Villegas.
\newblock {\em Mixed {H}odge polynomials of character varieties.}
\newblock Invent. Math. \textbf{174} (3) (2008) 555--624.
\newblock With an appendix by Nicholas M. Katz.

\bibitem[HLRV]{HLRV}
Tam\'{a}s Hausel, Emmanuel Letellier, and Fernando Rodriguez-Villegas.
\newblock {\em Arithmetic harmonic analysis on character and quiver varieties.}
\newblock Duke Math. J. \textbf{160} (2) (2011) 323--400.

\bibitem[Sc]{Sc}
Olivier Schiffmann.
\newblock {\em Indecomposable vector bundles and stable {H}iggs bundles over smooth projective curves.}
\newblock Ann. of Math. (2) \textbf{183} (1) (2016) 297--362.

\bibitem[Me]{Me}
Martin Mereb.
\newblock {\em On the $e$-polynomials of a family of $sl_n$-character varieties.}
\newblock Math. Ann. \textbf{363} (3-4) (2015) 857--892.

\bibitem[BH]{BH}
David Baraglia and Pedram Hekmati.
\newblock {\em Arithmetic of singular character varieties and their $e$-polynomials.}
\newblock Proc. Lond. Math. Soc. (3) \textbf{114} (2) (2017) 293--332.

\bibitem[LMN]{LMN}
Marina Logares, Vicente Mu\~{n}oz, and P.~E. Newstead.
\newblock {\em Hodge polynomials of $SL(2,\mathbb{C})$-character varieties for  curves of small genus.}
\newblock Rev. Mat. Complut. \textbf{26} (2) (2013) 635--703.

\bibitem[LM]{LM}
Sean Lawton and Vicente Mu\~{n}oz.
\newblock {\em $E$-polynomial of the $SL(3,\mathbb{C})$-character variety of free  groups.}
\newblock Pacific J. Math. \textbf{282} (1) (2016) 173--202.

\bibitem[GPLM]{GPLM}
\'Angel Gonz\'alez-Prieto, Marina Logares, and Vicente Mu\~noz.
\newblock {\em A lax monoidal topological quantum field theory for representation  varieties.}
\newblock Bull. Sci. Math. \textbf{161} 102871, 34, 2020.

\bibitem[GP]{GP}
\'{A}. Gonz\'{a}lez-Prieto.
\newblock {\em Topological quantum field theories for character varieties.}
\newblock Preprint arXiv:1812.11575, 2019.

\bibitem[MFK]{MFK}
D.~Mumford, J.~Fogarty, and F.~Kirwan.
\newblock {\em Geometric Invariant Theory}, volume~34 of {\em Ergebnisse der
  Mathematik und ihrer Grenzgebiete (2)}.
\newblock Springer-Verlag, Berlin, third edition, 1994.

\bibitem[Mu]{Mu}
Shigeru Mukai.
\newblock {\em An Introduction to Invariants and Moduli}, volume~81 of {\em
  Cambridge Studies in Advanced Mathematics}.
\newblock Cambridge University Press, Cambridge, 2003.

\bibitem[KS]{KS}
V.~G. Kac and A.~V. Smilga.
\newblock {\em Vacuum structure in supersymmetric {Y}ang-{M}ills theories with any  gauge group.}
\newblock In {\em The many faces of the superworld}, pages 185--234. World Sci.  Publ., River Edge, NJ, 2000.

\bibitem[BF]{BF}
Indranil Biswas and Carlos Florentino.
\newblock {\em Commuting elements in reductive groups and {H}iggs bundles on abelian varieties.}
\newblock J. Algebra \textbf{388} (2013) 194--202.

\bibitem[FL]{FL}
Carlos Florentino and Sean Lawton.
\newblock {\em Topology of character varieties of {A}belian groups.}
\newblock Topology Appl. \textbf{173} (2014) 32--58.

\bibitem[Si2]{Si2}
Adam~S. Sikora.
\newblock {\em Character varieties of abelian groups.}
\newblock Math. Z. \textbf{277} (1-2) (2014) 241--256.

\bibitem[RS]{RS}
M.~Stafa D.~A.~Ramras.
\newblock {\em Homological stability for spaces of commuting elements in lie groups.}
\newblock Int. Math. Research Notices (rnaa094), 2020.

\bibitem[St]{St}
Mentor Stafa.
\newblock {\em Poincar\'{e} series of character varieties for nilpotent groups.}
\newblock J. Group Theory \textbf{22} (3) (2019) 419--440.

\bibitem[Th]{Th}
Michael Thaddeus.
\newblock {\em Mirror symmetry, {L}anglands duality, and commuting elements of {L}ie  groups.}
\newblock Internat. Math. Res. Notices \textbf{22} (2001) 1169--1193.

\bibitem[DL]{DL}
A.~Dimca and G.~I. Lehrer.
\newblock {\em Purity and equivariant weight polynomials.}
\newblock In {\em Algebraic groups and Lie groups}, volume~9 of {\em Austral.
  Math. Soc. Lect. Ser.}, pages 161--181. Cambridge Univ. Press, Cambridge,
  1997.

\bibitem[Ch]{Ch}
Jan Cheah.
\newblock {\em On the cohomology of {H}ilbert schemes of points.}
\newblock J. Algebraic Geom. \textbf{5} (3) (1996) 479--511.

\bibitem[FNSZ]{FNSZ}
J.~Silva. C.~Florentino, A.~Nozad and A.~Zamora.
\newblock {\em On hodge polynomials of singular character varieties.}
\newblock To appear in the Proceedings of the 12th ISAAC conference,
  University of Aveiro, Portugal, 2019.

\bibitem[Sw]{Sw}
Gerald~W. Schwarz.
\newblock {\em The topology of algebraic quotients.}
\newblock In {\em Topological methods in algebraic transformation groups ({N}ew  {B}runswick, {NJ}, 1988)}, volume~80 of {\em Progr. Math.}, pages 135--151. Birkh\"{a}user Boston, Boston, MA, 1989.

\bibitem[Si1]{Si1}
Adam~S. Sikora.
\newblock {\em Character varieties.}
\newblock Trans. Amer. Math. Soc. \textbf{364} (10) (2012) 5173--5208.

\bibitem[BFM]{BFM}
Armand Borel, Robert Friedman, and John~W. Morgan.
\newblock {\em Almost commuting elements in compact lie groups.}
\newblock {\em Mem. Amer. Math. Soc.}, 157 (747):x+136, 2002.

\bibitem[PS]{PS}
Chris A.~M. Peters and Joseph H.~M. Steenbrink.
\newblock {\em Mixed {H}odge structures}, volume~52 of {\em Ergebnisse der
  Mathematik und ihrer Grenzgebiete. 3. Folge. A Series of Modern Surveys in
  Mathematics}.
\newblock Springer-Verlag, Berlin, 2008.

\bibitem[De1]{De1}
Pierre Deligne.
\newblock {\em Th\'{e}orie de {H}odge. {II}.}
\newblock Inst. Hautes \'{E}tudes Sci. Publ. Math. \textbf{40} (1971) 5--57.

\bibitem[De2]{De2}
Pierre Deligne.
\newblock {\em Th\'{e}orie de {H}odge. {III}.}
\newblock Inst. Hautes \'{E}tudes Sci. Publ. Math. \textbf{44} (1974) 5--77.

\bibitem[Du]{Du}
Alan~H. Durfee.
\newblock {\em Algebraic varieties which are a disjoint union of subvarieties.}
\newblock In {\em Geometry and topology ({A}thens, {G}a., 1985)}, volume 105 of  {\em Lecture Notes in Pure and Appl. Math.}, pages 99--102. Dekker, New York, 1987.

\bibitem[Hir]{Hir}
F.~Hirzebruch.
\newblock {\em Topological methods in algebraic geometry}.
\newblock Third enlarged edition. Die Grundlehren der Mathematischen
  Wissenschaften, Band 131. Springer-Verlag New York, Inc., New York, 1966.

\bibitem[Sp]{Sp}
Edwin~H. Spanier.
\newblock {\em Algebraic topology}.
\newblock McGraw-Hill Book Co., New York-Toronto, Ont.-London, 1966.

\bibitem[Mc]{Mc}
John McCleary.
\newblock {\em A user's guide to spectral sequences}, volume~58 of {\em
  Cambridge Studies in Advanced Mathematics}.
\newblock Cambridge University Press, Cambridge, second edition, 2001.

\bibitem[Br]{Br}
Kenneth~S. Brown.
\newblock {\em Cohomology of groups}, volume~87 of {\em Graduate Texts in
  Mathematics}.
\newblock Springer-Verlag, New York-Berlin, 1982.

\bibitem[ALR]{ALR}
Alejandro Adem, Johann Leida, and Yongbin Ruan.
\newblock {\em Orbifolds and stringy topology}, volume 171 of {\em Cambridge
  Tracts in Mathematics}.
\newblock Cambridge University Press, Cambridge, 2007.

\bibitem[Sa]{Sa}
I.~Satake.
\newblock {\em On a generalization of the notion of manifold.}
\newblock Proc. Nat. Acad. Sci. U.S.A. \textbf{42} (1956) 359--363.

\bibitem[Se]{Se}
Jean-Pierre Serre.
\newblock {\em Linear representations of finite groups}.
\newblock Springer-Verlag, New York-Heidelberg, 1977.
\newblock Graduate Texts in Mathematics, Vol. 42.

\bibitem[Ma]{Ma}
I.~G. Macdonald.
\newblock {\em The {P}oincar\'{e} polynomial of a symmetric product.}
\newblock Proc. Cambridge Philos. Soc. \textbf{58} (1962) 563--568.

\bibitem[Dr]{Dr}
J.-M. Dr\'ezet.
\newblock {\em Luna's slice theorem and applications}.
\newblock Hindawi Publishing Corporation, Cairo, 2004.
\newblock Lectures from the 23rd Autumn School in Algebraic Geometry held in
  Wykno, September 3--10, 2000.

\bibitem[Tu]{Tu}
Loring~W. Tu.
\newblock {\em Semistable bundles over an elliptic curve.}
\newblock Adv. Math. \textbf{98} (1) (1993) 1--26.

\bibitem[CHS]{CHS}
S.~S. Chern, F.~Hirzebruch, and J.-P. Serre.
\newblock {\em On the index of a fibered manifold.}
\newblock Proc. Amer. Math. Soc. \textbf{8} (1957) 587--596.

\end{thebibliography}

\end{document}